\documentclass[reqno,english]{amsart}
\usepackage{amsfonts,amsmath,latexsym,verbatim,amscd,mathrsfs,color,array}
\usepackage[colorlinks=true]{hyperref}

\usepackage{amsmath}

\usepackage{amssymb,amsthm,graphicx,color}
\usepackage[hmargin=3cm, vmargin=2.9cm]{geometry}
\usepackage{bbm}
\usepackage{amssymb}
\usepackage{pdfsync}
\usepackage{epstopdf}
\usepackage{cite}
\usepackage{graphicx}
\usepackage{multirow}
\usepackage[font=bf,aboveskip=15pt]{caption}
\usepackage[toc,page]{appendix}
\usepackage[colorlinks=true]{hyperref}
\usepackage{bookmark}
\usepackage{todonotes}

\usepackage{pgfplots}
\pgfplotsset{compat=1.18}

\allowdisplaybreaks

\DeclareFontShape{OT1}{cmr}{bx}{sc}{<-> cmbcsc10}{}
\newcommand{\la}{\lambda}

\newcommand{\R}{\mathbb R}

\newcommand{\bt}{\begin{theorem}}
\newcommand{\et}{\end{theorem}}
\newcommand{\bl}{\begin{lemma}}
\newcommand{\el}{\end{lemma}}
\newcommand{\bd}{\begin{definition}}
\newcommand{\ed}{\end{definition}}
\newcommand{\bc}{\begin{corollary}}
\newcommand{\ec}{\end{corollary}}
\newcommand{\bp}{\begin{proof}}
\newcommand{\ep}{\end{proof}}
\newcommand{\bx}{\begin{example}}
\newcommand{\ex}{\end{example}}
\newcommand{\bi}{\begin{exercise}}
\newcommand{\ei}{\end{exercise}}
\newcommand{\bo}{\begin{prop}}
\newcommand{\eo}{\end{prop}}
\newcommand{\br}{\begin{remark}}
\newcommand{\er}{\end{remark}}
\newcommand{\be}{\begin{equation}}
\newcommand{\ee}{\end{equation}}
\newcommand{\ba}{\begin{align}}
\newcommand{\ea}{\end{align}}
\newcommand{\bn}{\begin{enumerate}}
\newcommand{\en}{\end{enumerate}}
\newcommand{\bg}{\begin{align*}}
\newcommand{\bcs}{\begin{cases}}
\newcommand{\ecs}{\end{cases}}

\newcommand{\bean}{\begin{eqnarray*}}
\newcommand{\eean}{\end{eqnarray*}}

\newtheorem{definition}{Definition}[section]
\newtheorem{theorem}{Theorem}[section]
\newtheorem{lemma}{Lemma}[section]

\newtheorem{prop}{Proposition}[section]
\newtheorem{remark}{Remark}[section]

\numberwithin{equation}{section}

\begin{document}

\title[Collapsing-tube blow-up]
{Collapsing-tube type II blow-up for the energy-supercritical heat equation}

\author[M. del Pino]{Manuel del Pino}
\address{\noindent
Department of Mathematical Sciences,
University of Bath, Bath BA2 7AY, United Kingdom}
\email{mdp59@bath.ac.uk}

\author[M. Musso]{Monica Musso}
\address{\noindent
Department of Mathematical Sciences,
University of Bath, Bath BA2 7AY, United Kingdom}
\email{mm2683@bath.ac.uk}

\author[J. Wei]{Juncheng Wei}
\address{\noindent
Department of Mathematics,
Chinese University of Hong Kong,
Shatin, NT, Hong Kong}
\email{wei@math.cuhk.edu.hk}

\author[Y. Zhou]{Yifu Zhou}
\address{\noindent
School of Mathematics and Statistics, Wuhan University, 
Wuhan 430072, China}
\email{yifuzhou@whu.edu.cn}

\begin{abstract}
We construct a new type II finite-time blow-up mechanism for the
energy-supercritical heat equation
\[
u_t=\Delta u+u^3,
\qquad n\geq 5.
\]
The solution is positive and blows up only at the origin, but in a highly
anisotropic fashion. As \(t\nearrow T\), the solution concentrates in a
thin tubular region around an \((n-4)\)-dimensional sphere whose radius
shrinks to zero at the self-similar scale
\[
\xi_r(t)\sim \sqrt{2(n-4)(T-t)}.
\]
At the same time, concentration takes place transversely to the sphere at
the much smaller scale
\[
\lambda(t)\sim
\kappa_*
\frac{T-t}{|\log(T-t)|^{\frac n{n-2}}},
\]
for some \(\kappa_*>0\). More precisely, in cylindrical coordinates
\(r=|x'|\), \(z\in\mathbb R^3\), the leading profile is
\[
u(x,t)
\sim
\frac{1}{\lambda(t)}
U\left(
\frac{r-\xi_r(t)}{\lambda(t)},
\frac{z}{\lambda(t)}
\right),
\]
where \(U\) is the Aubin--Talenti bubble in \(\mathbb R^4\).

The construction reveals a two-scale singularity mechanism in which a
critical transverse bubble concentrates around a geometric set that
itself collapses. The concentration tube evolves at the parabolic scale
\(\sqrt{T-t}\), whereas its transverse thickness is governed by the much
smaller type II scale \(\lambda(t)\). The logarithmic blow-up law is
determined by a nonlocal modulation equation arising from the interaction
between the four-dimensional critical bubble and the axisymmetric heat
kernel. To our knowledge, this seems to be the first Type II blowup that quantifies the effect of a self-similar collapsing tube.

The exponent \(p=3\) is energy-supercritical in dimensions \(n\geq5\),
but lies below the Joseph--Lundgren exponent for $5\leq n\leq 12$, in a regime where positive
radial type II blow-up is ruled out. The present result provides the first
example of a positive type II, single-point blow-up through a collapsing thin-tube
geometry.
\end{abstract}
\maketitle



\section{Introduction}

\medskip
{ 

Semilinear heat equations have been a classical model for the study of
singularity formation in parabolic problems since the work of Fujita
\cite{Fujita66}. We refer to the monograph \cite{Souplet07book} for a
comprehensive account of the theory. For a finite-time blow-up solution
of
\[
u_t=\Delta u+u^p,
\qquad p>1,
\]
one distinguishes between type I blow-up, for which
\[
\limsup_{t\to T}
(T-t)^{\frac{1}{p-1}}
\|u(\cdot,t)\|_{\infty}
<+\infty,
\]
and type II blow-up, where this quantity is infinite. Type I blow-up is
at most of the order predicted by the ODE \(u_t=u^p\), while type II
blow-up is driven by a more delicate concentration mechanism.

The occurrence of type II blow-up depends strongly on the relation
between \(p\) and the Sobolev critical exponent. In the subcritical
range, type I behavior is predominant under broad assumptions; see, for
instance,
\cite{Giga87Indiana,Giga04Indiana,Giga04MMAS,Quittner21Duke,Merle97Duke}.
In the energy-critical case
\[
p=\frac{n+2}{n-2},
\]
type II blow-up is governed by concentration of the Aubin--Talenti
bubble. In low dimensions, finite-time type II blow-up was predicted
formally in \cite{FHV00} and subsequently constructed rigorously in
several settings; see, among others,
\cite{Schweyer12JFA,ni4d,type25D,harada6D}. In contrast,
positive finite-time type II blow-up is excluded in dimensions
\(n\geq 7\) under the hypotheses of \cite{wang2021refined}.  See \cite{Collot17CMP} for the classification of near-soliton dynamics and \cite{KimMerle,aryan} for the classification of bubble decomposition  in large dimensions.

\medskip
The energy-supercritical range
\[
p>\frac{n+2}{n-2}
\]
is more rigid in the radial class and at the same time more flexible
under nonradial perturbations. Herrero and Vel\'azquez constructed
radial type II blow-up solutions in the Joseph--Lundgren range
\[
n\geq 11,
\qquad
p>p_{JL}(n):=\begin{cases}
\infty~&\mbox{ if }~3\leq n\leq 10\\
1+\frac{4}{n-4-2\sqrt{n-1}}~&\mbox{ if }~n\geq11
\end{cases},
\]
see \cite{herrero1992blow,Velazquez94pJL}, while nonexistence and
classification results in the complementary Matano--Merle range $\frac{n+2}{n-2}<p<p_{JL}$ were
obtained in
\cite{Merle04CPAM,Merle09JFA,Mizoguchi11JDE}. See \cite{Mizoguchi05Indiana} when the domain is a ball and also \cite{Collot17APDE} with the restriction that $p$ is an odd integer.
Nonradial constructions show that geometry can create type II
mechanisms which are invisible in the radial class. Type II blow-up on a
fixed positive-dimensional set was constructed in \cite{17type2}. A
different strongly anisotropic type II mechanism was discovered by
Collot, Merle and Rapha\"el in \cite{CollotMerleRaphael}: the singularity
forms at an isolated point, but concentration occurs at different rates
in different spatial directions.

\medskip
The present paper gives a new anisotropic single-point blow-up mechanism that encompasses both the sub-threshold regime in lower dimensions and the super-threshold regime in higher dimensions, relative to the Joseph–Lundgren threshold. In particular, it provides the
first positive type II finite-time blow-up solutions for the cubic
equation \(u_t=\Delta u+u^3\) in dimensions \(n\geq5\), for instance in
dimension \(5\). The singularity is reached through a thin tubular
profile around a shrinking sphere, which collapses to the origin at the
blow-up time.  Note that the nonlinearity $ p=3$ falls within  the so-called  supercritical {\it degenerate} range outlined in \cite[Problem 2]{CollotMerleRaphael}. Within this specific regime, we discover a novel, two-scale blowup mechanism. See the conjectures in Section \ref{sec-1.3} below.

\medskip
The key point is that, in the symmetry class
considered below, the problem reduces in the variables
\[
r=\sqrt{x_1^2+\cdots+x_{n-3}^2},
\qquad
z=(x_{n-2},x_{n-1},x_n),
\]
to a four-dimensional critical heat equation with the additional
geometric drift
\[
\frac{n-4}{r}u_r.
\]
Thus the inner profile is the critical Aubin--Talenti bubble in
\(\mathbb R^4\), while the ambient dimension \(n\) enters through the
cylindrical drift and through the axisymmetric heat kernel governing the
outer correction.

This interaction between a four-dimensional critical bubble and a
shrinking higher-dimensional concentration set is responsible for the
new blow-up law. The concentration sphere collapses at the parabolic self-similar
scale,
\[
\xi_r(t)\sim \sqrt{2(n-4)(T-t)},
\]
while the transverse scale of the bubble is much smaller:
\[
\lambda(t)\sim
\kappa_*
\frac{T-t}{|\log(T-t)|^{\frac n{n-2}}}.
\]
Thus the solution develops a thin tubular profile around a sphere whose
radius itself tends to zero.

The case treated in this paper, corresponding to a four-dimensional
critical transverse bubble, is not meant to be the only possible
shrinking-sphere mechanism. Rather, it is the natural first case in
which this geometry can be connected with several already known critical
parabolic concentration phenomena. The logarithmic law
\[
\lambda(t)\sim
\frac{T-t}{|\log(T-t)|^2}
\]
appears, for instance, in the critical four-dimensional heat equation,
in harmonic map heat flow;  see
\cite{ni4d,17HMF,Schweyer12JFA} and the references therein. Other related logarithmic laws were found, for instance, in 2D Keller--Segel system and in 3D axially symmetric Keller--Segel
concentration on a sphere of fixed positive radius;  see
\cite{Collot-KS-2023,ringKS1,Collot-KS,gluingKS,KS-finite} and the references therein. In this sense, the present result should be
viewed as a geometric variant of a robust critical-parabolic mechanism,
now coupled to the collapse of the concentration set itself.

At the level of the leading ansatz, the concentration is obtained by
placing a \(4D\) bubble in the transverse variables
\[
(r,z)=\big(|x'|,z\big),
\]
centered at
\[
(r,z)=(\xi_r(t),0).
\]
Thus the main profile is equivalently written as
\[
U_{\lambda,\xi}(x',z,t)
=
\lambda^{-1}(t)
U\left(
\frac{
\sqrt{\big(|x'|-\xi_r(t)\big)^2+|z|^2}
}
{\lambda(t)}
\right),
\]
where \(x'\in\mathbb R^{n-3}\) and \(z\in\mathbb R^3\). This expression
makes explicit that the singular set is the sphere
\[
|x'|=\xi_r(t),
\qquad
z=0.
\]

}

{

\subsection{Type II singularity along a shrinking tube}

\medskip

We now state the main result. We consider
\begin{equation}\label{eqn}
\begin{cases}
u_t=\Delta u+u^{3},~&\mbox{ in } \Omega\times (0,T),\\
u(x,t)=u|_{\partial\Omega},~&\mbox{ on } \partial\Omega\times (0,T),\\
u(x,0)=u_0(x),~&\mbox{ in } \Omega,
\end{cases}
\end{equation}
where \(n\geq 5\) and \(\Omega=\R^n\), or \(\Omega\subset\R^n\) is a
smooth bounded domain satisfying the symmetries described below. We
write
\[
x=(x^*,x^{**})\in\R^{n-3}\times\R^3,
\qquad
r=|x^*|,
\qquad
z=x^{**},
\]
and look for solutions of the form
\begin{equation}\label{symmetryclass}
u(x,t)=\tilde u(r,z,t),
\end{equation}
even in the three \(z\)-variables. 

The corresponding \((r,z)\)-domain is
\begin{equation}\label{def-domain}
\mathcal D
:=
\left\{
(r,z):
r=\sqrt{x_1^2+\cdots+x_{n-3}^2},
\ z=(x_{n-2},x_{n-1},x_n),
\ x\in\Omega
\right\}.
\end{equation}

\begin{theorem}\label{thm}
Let \(n\geq 5\). Let \(\Omega=\R^n\), or let
\(\Omega\subset\R^n\) be a smooth bounded domain, radial in the first
\(n-3\) variables and even in the remaining three variables. Then, for
\(T>0\) sufficiently small, there exist initial data, and boundary data
in the bounded-domain case, such that the solution of \eqref{eqn} blows
up at time \(T\) along a shrinking \((n-4)\)-dimensional sphere.
More precisely, as \(t\nearrow T\),
\[
u(x,t)
\sim
\lambda^{-1}(t)
U\left(
\frac{(r,z)-(\xi_r(t),0)}
{\lambda(t)}
\right),
\]
where \(U\) is the standard bubble in \(\R^4\). Moreover, for some
\(\kappa_*>0\),
\[
\xi_r(t)\sim \sqrt{2(n-4)(T-t)}
\]
and
\[
\lambda(t)
\sim
\kappa_*
\frac{T-t}{|\log(T-t)|^{\frac n{n-2}}}.
\]
\end{theorem}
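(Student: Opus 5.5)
The plan is to use the inner--outer gluing method of \cite{17HMF,ni4d,17type2}, carried out in the reduced variables $(r,z)\in\mathcal D$. In the symmetry class \eqref{symmetryclass} the equation reads
\[
\tilde u_t=\partial_r^2\tilde u+\Delta_z\tilde u+\frac{n-4}{r}\partial_r\tilde u+\tilde u^3 .
\]
This is the four-dimensional critical heat equation in the variables $y=(r,z)\in\R^4$, perturbed by the drift term.

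\textbf{First approximation.} We take
\[
u_*=\lambda^{-1}U\big((y-(\xi_r,0))/\lambda\big)+\Phi_0 ,
\]
where $\Phi_0$ is an outer correction that cancels the slowly decaying part of the error.

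\textbf{The error.} Compute the error $S(u_*)$. Its main terms are $-\dot\lambda\lambda^{-2}Z_0(\rho)$ and $-\dot\xi_r\lambda^{-2}\partial_1U$, where $Z_0=U+\rho\cdot\nabla U$, together with the drift contribution $\frac{n-4}{r}\lambda^{-2}\partial_1U$. Near the tube $r\approx\xi_r$, so the drift acts like a translation term with velocity $-(n-4)/\xi_r$. Orthogonality to $\partial_1U$ therefore gives, at leading order, the ODE
\[
\dot\xi_r=-\frac{n-4}{\xi_r},
\]
so that $\xi_r^2=2(n-4)(T-t)$. Corrections to this come from the nonlocal terms and are handled as a perturbation $\xi_r=\xi_r^0+\xi_1$.

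\textbf{The nonlocal modulation equation for $\lambda$.} Since $Z_0\sim\rho^{-2}$ is not in $L^2(\R^4)$, we solve for $\Phi_0$ with the full $n$-dimensional heat operator, i.e.\ using the axisymmetric heat kernel on $\R^n$ applied to the source $\dot\lambda\,\rho^{-2}$-type term concentrated near the moving sphere of radius $\xi_r$. Then impose orthogonality of the inner error to $Z_0$ (the $Z_5$ kernel in 4D). This yields an integral equation of the form
\[
\int_0^{t-\lambda^2}\frac{\dot\lambda(s)}{t-s}\,K\Big(\frac{\xi_r(t)}{\sqrt{t-s}},\frac{\xi_r(s)}{\sqrt{t-s}}\Big)\,ds\approx \text{(const)}\,\frac{\lambda(t)}{\xi_r(t)^{2}}\cdots .
\]
Here the kernel $K$ encodes the fact that the sphere is $(n-4)$-dimensional and of size comparable to $\sqrt{T-t}$. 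Our expectation is that, on the self-similar scale, the sphere behaves like an $n$-dimensional point singularity at large times, which changes the $4$D law $|\log|^{-2}$ into $|\log(T-t)|^{-n/(n-2)}$.

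Solving this equation is the main obstacle. We will proceed as in \cite{17HMF}: extract the leading logarithmic term $\dot\lambda(t)\,|\log(T-t)|$ plus a convolution remainder, make the ansatz $\lambda=\kappa(T-t)|\log(T-t)|^{-\beta}$, and match exponents to get $\beta=n/(n-2)$ and some $\kappa_*>0$. Then we invert the linearized nonlocal operator on a weighted space of functions decaying like $|\log(T-t)|^{-1-\sigma}$, with a free parameter used to fix $\lambda(T)=0$.

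\textbf{Fixed point.} Write
\[
u=u_*+\eta\,\lambda^{-1}\phi(y/\lambda,\cdot)+\psi ,
\]
with $\eta$ a cutoff to the region $\rho\le 2R$ and $R=(T-t)^{-\gamma}$. The pair $(\phi,\psi)$ solves the following coupled system.
\begin{itemize}
\item An inner problem $\lambda^2\phi_t=\Delta_4\phi+3U^2\phi+h$. This is solved with the linear theory in \cite{17HMF,ni4d}: it gives $\rho$-weighted decay bounds, provided $h$ is orthogonal to $Z_0,\ldots,Z_4$. The odd $z$-directions are removed by the evenness in $z$, and the orthogonality to $Z_0$ and $Z_1$ determines $\lambda$ and $\xi_r$ respectively.
\item An outer problem for $\psi$. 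This is the $n$-dimensional heat equation with small right-hand side, solved by Duhamel's formula in weighted $L^\infty$ norms. In the bounded-domain case the boundary data are chosen equal to $u_*$.
\end{itemize}
Finally, apply the Schauder fixed point theorem to the map $(\phi,\psi,\lambda_1,\xi_1)\mapsto\cdots$ for $T$ small. This gives the solution. Its profile, together with the asymptotics of $\xi_r$ and $\lambda$, follows from the construction, and blow-up occurs only at the origin because $\xi_r\to0$.
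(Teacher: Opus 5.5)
\textbf{The driving term is missing.} Your architecture matches the paper's: reduction to the $4$D critical problem with drift, a $\rho^{-2}$ correction built from the axisymmetric heat kernel, $Z_1$-orthogonality giving $\xi_r^2=2(n-4)(T-t)$, then inner--outer gluing and Schauder. But the scaling equation, as you set it up, cannot produce $\kappa_*\neq0$. In the paper the perturbation also contains a free caloric function $Z^*$ with initial datum $Z_0^*$, and the $Z_5$-orthogonality becomes
\[
C\Big(\int_0^{t-(T-t)}\frac{\dot\lambda(s)}{t-s}\,ds+c_n^*\int_{t-(T-t)}^{t-\lambda^2(t)}\frac{\dot\lambda(s)}{t-s}\,ds\Big)=-c_0\,a_*+o(1),
\]
with $c_0=3\int_{\R^4}U^2Z_5\neq0$. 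Here $a_*$, essentially the value of the outer field $Z^*+\psi$ at the collapse point, is required to be negative. This $O(1)$ forcing is what drives $\lambda\to0$ and what fixes $\kappa$; in the fixed point, $\kappa$ stays within $|\log T|^{-1/2}$ of $Z_0^*(0)$. Your outer problem has only a small source and no free part, and the right-hand side you write, of order $\lambda/\xi_r^2$, tends to zero. Yet for $\dot\lambda\approx-\kappa|\log(T-t)|^{-\beta}$ with any $\beta>1$, the left-hand side tends to $-\frac{C\kappa}{\beta-1}|\log T|^{1-\beta}\neq0$. Matching therefore forces $\kappa=0$, and the integration constant you reserve for $\lambda(T)=0$ cannot compensate. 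Moreover, your $\lambda/\xi_r^2$ term is the mode-$0$ projection of the drift. It has the same $\rho^{-2}$ tail as $\mathcal E_0$, and because $Z_5\sim|y|^{-2}$ its projection carries a factor $\log R$. So it competes with the main term and must be absorbed into the $\rho^{-2}$ correction (the paper modifies $\Psi_0$ for it), not used as the forcing.

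\textbf{The exponent is asserted, not derived.} Extracting ``$\dot\lambda(t)\,|\log(T-t)|$ plus a convolution remainder'' as in \cite{17HMF} only reproduces the $4$D exponent $2$. In the paper, $\frac{n}{n-2}$ comes from the kernel having two different limits: coefficient $C$ for $t-s\gtrsim T-t$ (small Bessel argument), and $Cc_n^*$ with $c_n^*=\frac{n-2}{2}$ for $\lambda^2\ll t-s\ll T-t$ (large Bessel argument). Then $\frac{d}{dt}\big(\dot\lambda\,|\log(T-t)|^{(c_n^*+1)/c_n^*}\big)\approx0$ gives $\beta=\frac{c_n^*+1}{c_n^*}$. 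So these two constants must actually be computed. Be careful with your point-singularity heuristic, because it is exactly what Appendix~\ref{App-nonlocal} does. With $G_d$ the $d$-dimensional heat kernel and $\tau\int_{\R^d}G_d(w,\tau)|w|^{-2}\,dw=\frac{1}{2(d-2)}$, one has $c_n^*=\tfrac14/\tfrac{1}{2(n-2)}$, the ratio of the $d=4$ and $d=n$ values. In other words, the sphere is treated as a point source in the macroscopic regime, through an expansion stated for $A_n\le1$. However, for $t-s\ge T-t$ the rescaled radius of the source sphere is $A_n=c_n\sqrt{(T-s)/(t-s)}\ge c_n=\sqrt{2(n-4)}>1$. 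On the diffusive scale the sphere therefore never shrinks; it sits at the maximum of the radial density $r^{n-4}e^{-r^2/4}$. The Gaussian average of $\rho^{-2}$ it produces is strictly larger than the point-source value. That macroscopic constant, together with the drift contribution above, has to be computed before any exponent can be matched.
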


When \(n=4\), the shrinking sphere degenerates to a point and the
geometric drift disappears. The rate in Theorem \ref{thm} then formally
reduces to
\[
\lambda(t)
\sim
\frac{T-t}{|\log(T-t)|^2},
\]
which is the stable \(k=1\) rate predicted in \cite{FHV00} for the cubic
critical problem in \(\R^4\).

\medskip

Theorem \ref{thm} exhibits a new type II blow-up mechanism in which
concentration takes place along a positive-dimensional manifold whose
size shrinks at the self-similar scale. More precisely, the
concentration set is an \((n-4)\)-dimensional sphere of radius
\[
\xi_r(t)
\sim
\sqrt{2(n-4)(T-t)},
\]
which collapses to the origin as \(t\nearrow T\). At the same time, the
transverse concentration scale satisfies
\[
\lambda(t)\ll \sqrt{T-t}.
\]
Thus the solution develops a thin tubular profile around a sphere whose
radius itself tends to zero. A schematic depiction of this evolution for
\(n=5\), where the concentration set is a shrinking circle, is given in
Figure \ref{Fig.1}.

\begin{figure}[htbp]
\centering
\includegraphics[width=110mm]{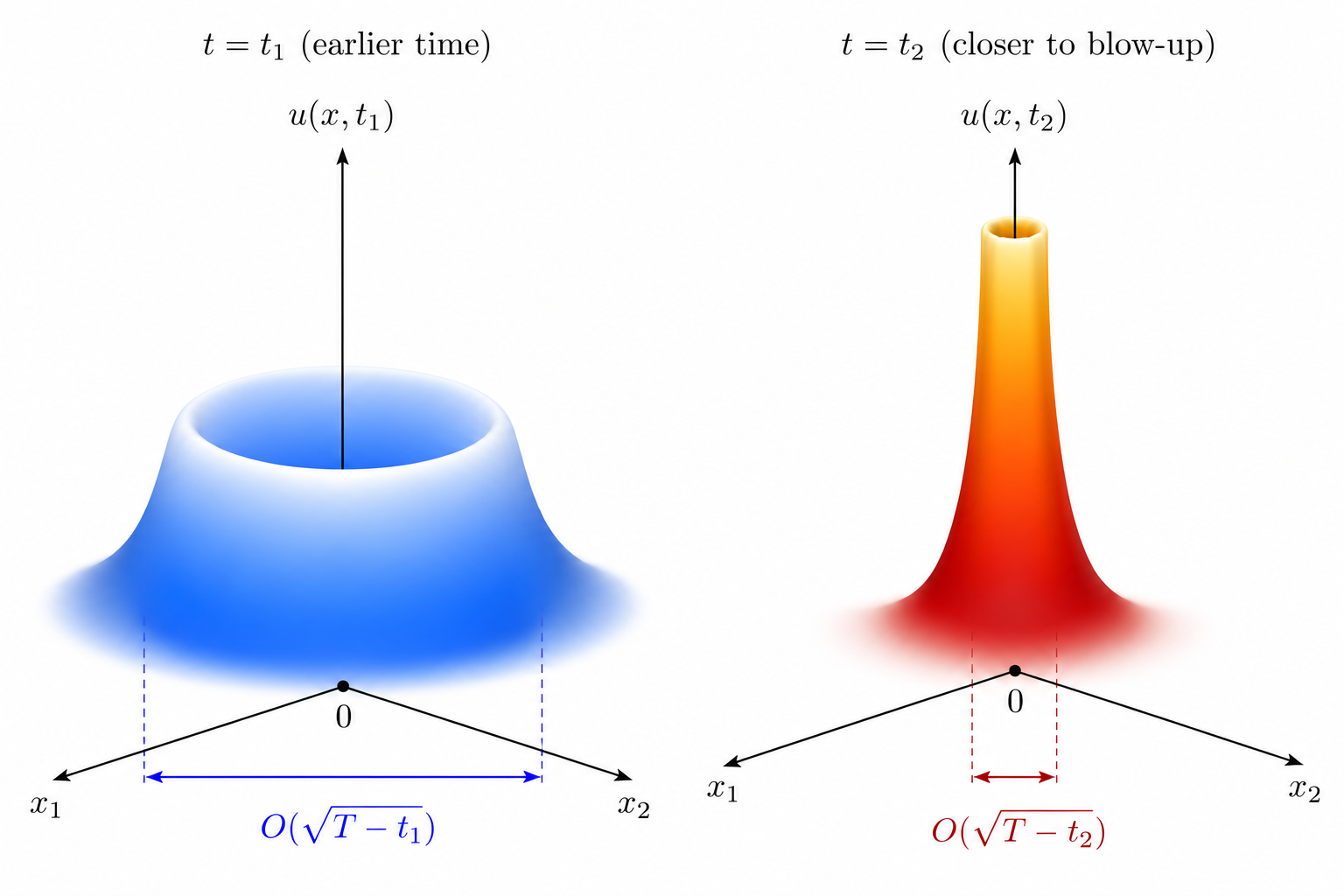}
\caption{A schematic depiction of the evolution of the ``thin tube'' in
Theorem \ref{thm} for \(n=5\). In this case, the concentration set is a
shrinking circle. For \(0<t_1<t_2<T\), the concentration set evolves
from the picture on the left to the picture on the right, and finally
collapses to a point as \(t\nearrow T\).}
\label{Fig.1}
\end{figure}

The construction is related to earlier examples of type II blow-up along
positive-dimensional sets. In the energy-supercritical heat equation
with the second Sobolev exponent, type II blow-up concentrating on a
fixed circle was constructed in \cite{17type2}. The present result is
different in that the concentration set itself collapses at the
self-similar scale. 

{
A closely related two-scale concentration pattern appears in the
numerical scenario of Hou and Huang for potential singularity formation
in the three-dimensional incompressible axisymmetric Euler equations
\cite{HouHuangEuler}. In cylindrical coordinates \((r,\theta,z)\), their
scenario describes a concentrating ring travelling toward the origin.
In notation parallel to ours, its center may be written as
\[
(r,z)=(\xi(t),0),
\qquad
\xi(t)=O((T-t)^{1/2}),
\]
while its transverse thickness is of order
\[
\lambda(t)=O(T-t).
\]
At the level of the angular vorticity, this corresponds formally to a
local ansatz of the form
\[
\omega^\theta(r,z,t)
\sim
\frac{1}{T-t}
\Omega\left(
\frac{r-\xi(t)}{\lambda(t)},
\frac{z}{\lambda(t)}
\right).
\]

The geometry of Theorem \ref{thm} has the same two-scale character. In
our construction,
\[
\xi_r(t)\sim \sqrt{2(n-4)(T-t)},
\]
whereas
\[
\lambda(t)\sim
\kappa_*
\frac{T-t}{|\log(T-t)|^{\frac n{n-2}}}.
\]
Thus, in both settings, the radial location of the concentration set lies at a distance of order \((T-t)^{1/2}\) from the origin, while concentration in the transverse directions occurs on a much smaller scale, of order (T-t) up to a logarithmic correction in the present problem.

\medskip
The possible relevance of collapsing-ring scenarios in boundaryless axisymmetric flows is discussed in \cite{ChaeTsaiLuoHou}. Although the underlying mechanisms are different, the present construction provides a rigorous realization, in a scalar parabolic model, of a two-scale collapsing-ring geometry closely resembling the scenario observed numerically by Hou and Huang in \cite{HouHuangEuler}. In particular, the law
\[
\xi_r(t)\sim \sqrt{T-t}
\]
is obtained here from an explicit modulation equation, suggesting that this radial scale may be a robust geometric feature of axisymmetric singularity formation.

Related nearly self-similar scenarios have been proposed for generalized axisymmetric Navier--Stokes and Boussinesq equations in \cite{HouNSBoussinesq}. In addition, Tao \cite{TaoEuler} constructed a similar ``neck-pinch'' singularity, with self-similar size \(\sqrt{T-t}\), for a generalized three-dimensional Euler equation.

The proof is based on an inner--outer gluing scheme. We refer to the second author
\cite{MussoNAMS} for a survey of this method in parabolic problems.
The approximate solution is built from a sharply scaled \(4D\)
Aubin--Talenti bubble centered at \(r=\xi_r(t)\), \(z=0\).

The inner problem is governed by the linearized operator around the
bubble in \(\mathbb R^4\), while the outer problem is solved in the
original \(n\)-dimensional variables within the imposed symmetry class.
The solvability conditions for the inner problem determine the
modulation equations for \(\lambda(t)\) and \(\xi(t)\). }

\medskip
\subsection{Main difficulties and novelties}

In the symmetry class \eqref{symmetryclass}, problem \eqref{eqn} is reduced to solving
\begin{equation}\label{eqn''}
\begin{cases}
\tilde u_t=\Delta_{(r,z)} \tilde u+\frac{n-4}{r}\tilde u_r+\tilde u^{3},~&\mbox{ in } \mathcal D\times (0,T)\\
\tilde u_r=0,~&\mbox{ on } (\mathcal D\cap\{r=0\})\times (0,T)\\
\tilde u=\tilde u|_{\partial\Omega},~&\mbox{ on } (\partial \mathcal D\backslash \{r=0\})\times (0,T)\\
\tilde u(\cdot,0)=\tilde u_0,~&\mbox{ in } \mathcal D\\
\end{cases}
\end{equation}
where $\mathcal D$ is defined in \eqref{def-domain} and $\Delta_{(r,z)}:=\partial_r^2+\Delta_z$ is the Laplacian in $\R^4$. Note that $p=3$ is the critical Sobolev exponent in $\R^4$. So problem \eqref{eqn''} can be viewed as the energy critical problem in $\R^4$ with drift term $\frac{n-4}{r}\tilde u_r$. Major difficulties and novelties of this supercritical problem come from several aspects. 

\medskip

\noindent (1) {\it Moving singularity with self-similar rate.} The term $\frac{n-4}{r}\tilde u_r$ plays a crucial role in producing the shrinking concentration set through aforementioned orthogonality condition at mode $1$. The translation dynamics read
\begin{equation*}
\left\{
\begin{aligned}
&\dot{\xi}_r(t) +\frac{n-4}{\xi_r(t)}=o(1),\\
&\dot\xi_{z_j}(t)=o(1),\quad j=2,3,4,
\end{aligned}
\right.
\end{equation*}
and the anisotropy between $\xi_r$ and $\xi_{z_j}$ is due to the presence of above drift term, making the concentration location shrinking in $r$ with the self-similar rate. The fact that the type II rate $\la(t)\ll \sqrt{T-t}$ plays a key role here.

\medskip 
{

\noindent (2) {\it Improvement of the approximation via the axisymmetric heat kernel.}
The leading error of the \(4D\) bubble, when projected onto the scaling
mode, contains a slowly decaying part which cannot be treated as a
perturbative remainder. We therefore introduce a global correction
\(\Psi_0\), represented by the axisymmetric heat kernel derived in
Appendix \ref{App-heatkernel}. The leading mechanism by which this correction contributes
to the scaling modulation equation is described in Section \ref{sec-laxi}, while the
full asymptotic estimates are carried out in Appendix \ref{App-nonlocal}.

\medskip

\noindent (3) {\it Nonlocal scaling law.}
The reduced equation for the scaling parameter \(\lambda(t)\) is not a
local ODE, but a nonlocal integro-differential equation. Its main term
has the form
\[
\int_0^{t-(T-t)}
\frac{\dot\lambda(s)}{t-s}\,ds
+
c_n^*
\int_{t-(T-t)}^{t-\lambda^2(t)}
\frac{\dot\lambda(s)}{t-s}\,ds
=
-c+o(1),
\]
where
\[
c_n^*=\frac{n-2}{2}.
\]
Solving this equation to sufficient precision gives the logarithmic
law stated in Theorem \ref{thm}. The control of the remainder requires
quantitative H\"older estimates and is carried out in Section \ref{subsec-la}.

\medskip

\noindent (4) {\it Refined estimates in the linearized problems.} 
Since the desired blow-up solution is located in the self-similar regime
$r=O(\sqrt{T-t})$, the estimates for the linear outer problem are much
more delicate than in the case of a fixed concentration set, where
$r=O(1)$. In particular, the shrinking effect $r\sim \sqrt{T-t}\to0$
makes the term
\[
\frac{n-4}{r}\tilde u_r
\]
difficult to control. To carry out the fixed point argument in suitable
weighted topologies, refined estimates are therefore needed both for the
outer and the inner problems.

For the outer problem, we use Duhamel's formula in $\R^n$ within the
symmetry class. For the inner problem, motivated by
\cite{Green16JEMS,17HMF}, we decompose the linearized problem into
scaling, translation and higher modes. The scaling and translation modes
are the most delicate ones. To handle them, we introduce another
inner--outer gluing procedure (``re-gluing'') together with a careful
blow-up argument in Section \ref{sec-linearinner}. This is needed because
the inner solution deteriorates near the blow-up set, and without this
refinement the nonlinear terms in the coupled inner and outer equations
cannot be controlled.

Finally, the radius $R(t)$ of the localized inner problem and the weighted
topologies have to be chosen carefully. The logarithmic factor in
\[
R(t)=(T-t)^{-1/2}|\log(T-t)|^{\theta},
\qquad
\theta\in\left(\frac{n-1}{n-2},\frac n{n-2}\right),
\]
is essential. It ensures that the inner region grows in the scaled
variables, while its physical size satisfies
\[
\lambda(t)R(t)
=
\sqrt{T-t}\,|\log(T-t)|^{\theta-\frac n{n-2}}
\ll \sqrt{T-t}.
\]
Thus the inner problem remains strictly inside the self-similar scale of
the shrinking sphere. The lower bound on $\theta$ is needed to close the
inner--outer estimates and the relevant time integrability.
}

\medskip

\subsection{Further results and conjectures}\label{sec-1.3}

The construction of Theorem \ref{thm} is nonradial in an essential way.
Indeed, the supercritical problem \eqref{eqn} lies in the
Matano--Merle range
\[
\frac{n+2}{n-2}<p<p_{JL},
\]
for $5\leq n\leq 12$,
where finite-time type II blow-up is ruled out for radial solutions,
under additional assumptions, in a ball or in the entire space; see
\cite{Merle04CPAM,Merle09JFA,Mizoguchi11JDE}. The solution constructed
here avoids this obstruction through the translation mode, which drives
the collapse of the concentration sphere.

A related construction gives type II blow-up on a sphere of fixed
radius. More precisely, under the same assumptions as in Theorem
\ref{thm}, there exist initial and boundary data such that the solution
of \eqref{eqn} blows up along a fixed \((n-4)\)-dimensional sphere:
\[
u(x,t)
\sim
\mu^{-1}(t)
U\left(
\frac{(r,z)-(\zeta_r(t),0)}
{\mu(t)}
\right),
\qquad t\nearrow T,
\]
where, for some \(\kappa_*>0\) and some fixed \(r_0>0\),
\[
\mu(t)
\sim
\kappa_*
\frac{T-t}{|\log(T-t)|^2},
\qquad
\zeta_r(t)\sim r_0.
\]
We do not give the details, since this follows by a simpler variant of
the present argument. In that case the concentration set does not enter
the self-similar region \(r=O(\sqrt{T-t})\), and the logarithmic
correction is the same as in the critical four-dimensional problem.

We close with a formal conjectural picture for analogous constructions
based on critical bubbles of other transverse dimensions. Consider
\[
u_t=\Delta u+|u|^{\frac4{k-2}}u
\qquad\mbox{in } \R^n,
\qquad
3\leq k<n.
\]
Writing
\[
x=(x^*,x^{**})
\in
\R^{\,n-k+1}\times\R^{\,k-1},
\qquad
\mathtt r=|x^*|,
\qquad
\mathtt z=x^{**},
\]
and imposing the corresponding cylindrical symmetry, the equation
reduces formally to
\[
v_t
=
\Delta_{(\mathtt r,\mathtt z)}v
+
\frac{n-k}{\mathtt r}v_{\mathtt r}
+
|v|^{\frac4{k-2}}v.
\]
One may then ask whether there exist solutions concentrating along a
shrinking \((n-k)\)-dimensional sphere whose radius is of order
\(\sqrt{T-t}\).

The formal reason for the rates below is the following. A critical
bubble in \(\R^k\) satisfies
\[
U_\lambda(x)
=
\lambda^{-\frac{k-2}{2}}
U\left(\frac{x-\xi}{\lambda}\right),
\qquad
U_\lambda(x)
\sim
\lambda^{\frac{k-2}{2}}|x-\xi|^{2-k}
\]
away from the core. Hence the tail of the scaling error has the form
\[
\dot\lambda\,\partial_\lambda U_\lambda
\sim
\dot\lambda\,
\lambda^{\frac{k-4}{2}}
|x-\xi|^{2-k}.
\]
After heat evolution, the Newtonian tail produces a memory term of size
\[
\dot\lambda(t)
\lambda(t)^{\frac{k-4}{2}}
\int_{\lambda^2(t)}^{T-t}
\tau^{-\frac{k-2}{2}}\,d\tau.
\]
Thus the behavior depends on whether this integral is dominated by the
macroscopic endpoint, is logarithmic, or is dominated by the microscopic
endpoint.

For \(k=3\),
\[
\int_{\lambda^2}^{T-t}\tau^{-1/2}\,d\tau
\sim
\sqrt{T-t},
\]
and balancing with a nonzero leading constant gives
\[
\dot\lambda
\sim
-C\frac{\lambda^{1/2}}{\sqrt{T-t}},
\qquad
\lambda(t)\sim T-t.
\]
For \(k=4\), the integral is logarithmic:
\[
\int_{\lambda^2}^{T-t}\frac{d\tau}{\tau}
=
\log\left(\frac{T-t}{\lambda^2}\right),
\]
which is the borderline mechanism realized in Theorem \ref{thm}. For
\(k>4\),
\[
\int_{\lambda^2}^{T-t}
\tau^{-\frac{k-2}{2}}\,d\tau
\sim
\lambda^{-(k-4)}.
\]
The reduced equation then becomes formally
\[
\dot\lambda
\sim
-C\lambda^{\frac{k-4}{2}}.
\]
This yields finite-time concentration for \(k=5\), with
\[
\lambda(t)\sim (T-t)^2,
\]
while \(k=6\) is the threshold case,
\[
\lambda(t)\sim e^{-Ct},
\]
and \(k>6\) gives infinite-time algebraic concentration,
\[
\lambda(t)\sim t^{-\frac{2}{k-6}}.
\]
The resulting formal picture is summarized in the following table:
\[
\begin{array}{c|c|c|c}
k
&
\text{formal reduced equation}
&
\text{conjectural scale}
&
\text{expected regime}
\\[0.3em]
\hline
3
&
\displaystyle
\dot\lambda
\sim
-C\frac{\lambda^{1/2}}{\sqrt{T-t}}
&
\displaystyle
T-t
&
\text{finite-time type II}
\\[1.2em]
4
&
\text{logarithmic memory}
&
\displaystyle
\frac{T-t}{|\log(T-t)|^\alpha}
&
\text{finite-time type II}
\\[1.2em]
5
&
\displaystyle
\dot\lambda
\sim
-C\lambda^{1/2}
&
\displaystyle
(T-t)^2
&
\text{finite-time type II}
\\[1.2em]
6
&
\displaystyle
\dot\lambda
\sim
-C\lambda
&
\displaystyle
e^{-Ct}
&
\text{infinite-time concentration}
\\[1.2em]
k>6
&
\displaystyle
\dot\lambda
\sim
-C\lambda^{\frac{k-4}{2}}
&
\displaystyle
t^{-\frac{2}{k-6}}
&
\text{infinite-time concentration}
\end{array}
\]

\medskip
Here \(C>0\), and the table should be understood only at the level of
formal leading-order modulation. The case \(k=4\), realized in Theorem
\ref{thm}, is the logarithmic borderline case. In the present geometry,
the logarithmic exponent is determined by the ambient dimension and is
\[
\alpha=\frac n{n-2}.
\]

\medskip

This paper will be organized as follows. In Section \ref{Sec-Approximation}, we analyze the error of the first approximation and improve it by adding corrections. We set up the gluing scheme in Section \ref{sec-inneroutergluingscheme} and develop its linear theories in Section \ref{sec-linearouter} and Section \ref{sec-linearinner}. The leading dynamics of $\la(t)$ and $\xi_r(t)$ are derived in Section \ref{sec-laxi} through corresponding orthogonality conditions. In Section \ref{sec-innerouter}, we solve the full nonlinear coupled system. The proofs of some technical ingredients are postponed to Appendix \ref{Appendix-outer} and Appendix \ref{App-B}.

\medskip


\medskip

\section{Approximate solutions and error estimates}\label{Sec-Approximation}

\medskip

In the symmetry class \eqref{symmetryclass}, problem \eqref{eqn} becomes
$$u_t=u_{rr}+\frac{n-4}{r}u_r+\Delta_{z} u+u^3,$$
where $(r,z) \in \mathcal D$ with $\mathcal D$ defined in \eqref{def-domain}. Define the error operator
\begin{equation*}
\mathcal S (u):=-u_t+\Delta_{(r,z)} u+\frac{n-4}{r}u_r+u^3,
\end{equation*}
where $\Delta_{(r,z)}:=\partial_r^2+\Delta_z$ is the Laplacian in $\R^4$. Our first approximate solution is based on the Aubin--Talenti bubble 
$$
U(y)=\frac{\alpha_{0}}{1+|y|^2},
$$ 
which solves the Yamabe problem
\begin{equation*}
\Delta_y U+U^3=0~\mbox{ in }~\R^4.
\end{equation*}
Here $\alpha_0=2\sqrt{2}.$ It is well-known that the linearized operator around the bubble
\begin{equation}\label{def-L0}
L_0(\phi):=\Delta \phi+3U^2\phi
\end{equation}
is non-degenerate in the sense that all bounded solutions to $L_0(\phi)=0$ are the linear combination of
\begin{equation}\label{kernels}
Z_i(y):=\partial_{y_i} U(y),~~i=1,2,3,4,~Z_5(y):=U(y)+\nabla U(y)\cdot y.
\end{equation}

\subsection{First approximate solution}
We write
\[
U_{\la(t),\xi(t)}(r,z)
=
\la^{-1}(t)
U\left(
\frac{(r,z)-\xi(t)}{\la(t)}
\right),
\]
where
\[
\xi(t)=(\xi_r(t),0),
\qquad
\xi_r(t)=\xi_{r,*}(t)+\xi_{r,1}(t),
\qquad
\xi_{r,1}(t)=o(\xi_{r,*}(t)).
\]
In the sequel, we denote
\[
y=\frac{(r,z)-\xi(t)}{\la(t)}.
\]

Then the first error of $U_{\la(t),\xi(t)}$ is
\begin{equation}\label{firsterror}
\begin{aligned}
\mathcal S(U_{\la(t),\xi(t)})=&~- \partial_t U_{\la,\xi}+\frac{n-4}{r} \partial_r  U_{\la,\xi}\\
=&~  \la^{-2}(t)\dot{\la}(t)\left(-\frac{\alpha_0}{1+|y|^2}+\frac{2\alpha_0}{(1+|y|^2)^2}\right)+\la^{-2}(t)\nabla U(y)\cdot \dot{\xi}(t)+\frac{n-4}r \partial_r  U_{\la,\xi},
\end{aligned}
\end{equation}
where $\nabla:=(\partial_r, \nabla_z)$.

\subsection{Corrected approximate solution}

Observe that the slow decaying error in \eqref{firsterror} is
\begin{equation}\label{def-mE0}
\mathcal E_0=-\frac{\alpha_0\dot\la(t)}{\la^2(t)+\rho^2}\approx -\frac{\alpha_0\dot \la(t)}{\rho^2},
\end{equation}
where $\rho=|(r,z)-\xi(t)|.$
In order to reduce the size of the first error, we shall choose $\Psi_0(r,z,t)$ that solves
\begin{equation*}
\left\{
    \begin{aligned}
    &~\partial_t \Psi_0=\partial_{rr}\Psi_0+\frac{n-4}{r}\partial_r \Psi_0+\Delta_z \Psi_0-\frac{\alpha_0\dot \la(t)}{\rho^2}~&\mbox{ in }~\R_+\times\R^3\times(0,T),\\
    &~\Psi_0(r,z,0)=0~&\mbox{ in }~\R_+\times\R^3.
    \end{aligned}
\right.
\end{equation*}
By the axisymmetric heat kernel derived in Appendix \ref{App-heatkernel}, one can express
$$
\begin{aligned}
\Psi_0(r,z,t)=-2\sqrt2\int_0^t \int_{\R^3}\int_{\R_+}\Gamma_n(r,z;\tilde r,\tilde z;t-s)\frac{\dot\la(s)}{|(\tilde r,\tilde z)-\xi(s)|^2} d\tilde r d\tilde z ds,
\end{aligned}
$$
where $\Gamma_n$ is defined in \eqref{axi-heatkernel}.

Now we choose the corrected approximation as
$$u^*=U_{\la(t),\xi(t)}+ \Psi_0$$
and compute the error
$$
\begin{aligned}
\mathcal S(u^*)
=&~\mathcal S(U_{\la(t),\xi(t)})+\frac{\alpha_0\dot\la(t)}{\rho^2}+(U_{\la(t),\xi(t)}+\Psi_0)^3-U_{\la(t),\xi(t)}^3\\
=&~ \frac{2\alpha_0\la^{-2}(t)\dot{\la}(t)}{(1+|y|^2)^2}+\la^{-2}(t)\nabla U(y)\cdot \dot{\xi}(t)+\frac{\alpha_0\dot\la(t)\la^2(t)}{\rho^2(\la^2+\rho^2)}+\frac{n-4}r \partial_r U_{\la,\xi}\\
&~+(U_{\la,\xi}+\Psi_0)^3-U_{\la,\xi}^3\\
=&~\mathcal K[\la,\xi]+(U_{\la,\xi}+\Psi_0)^3-U_{\la,\xi}^3,
\end{aligned}
$$
where $\mathcal K[\la,\xi]$ is defined as
\begin{equation}\label{def-mK}
\begin{aligned}
\mathcal K[\la,\xi]:=&~\frac{2\alpha_0 \la^{-2}(t)\dot{\la}(t)}{(1+|y|^2)^2}+\frac{\alpha_0\dot\la(t)\la^2(t)}{\rho^2(\la^2+\rho^2)}+\la^{-2}(t)\nabla U(y)\cdot \dot{\xi}(t)\\
&~+\frac{n-4}{\la(t)y_1+\xi_r(t)}\la^{-2}(t)\partial_{y_1}U(y).
\end{aligned}
\end{equation}
We define
\begin{equation}\label{def-Sout}
\begin{aligned}
\mathcal S_{\rm out}[\la,\xi]:=&~(1-\eta_R)\left(\frac{\alpha_0\dot\la(t)\la^2(t)}{\rho^2(\la^2+\rho^2)}+\la^{-2}\nabla U(y)\cdot \dot{\xi}+\frac{n-4}{\la y_1+\xi_r }\la^{-2} \partial_{y_1}U(y)\right).
\end{aligned}
\end{equation}
Here
$$
\eta_R=\eta_{R(t)}(r,z,t)=\eta\left(\frac{|(r,z)-\xi(t)|}{\la(t)R(t)}\right),
$$
the smooth cut-off function $\eta$ is defined by 
\begin{equation}\label{def-cutoff}
\eta(s)=\begin{cases}
1,~&s<1,\\
0,~&s>2,
\end{cases}
\end{equation}
and $R(t)$ will be specified later. To further reduce the size of the error, we introduce the leading orders of $\la$ and $\xi$
$$\la_*=\frac{2}{n-2}\frac{|\log T|^{\frac{2}{n-2}}(T-t)}{|\log(T-t)|^{\frac{n}{n-2}}},\quad \xi_*=(\xi_{r,*},~\xi_{z,*})=(\sqrt{2(n-4)(T-t)},~z_0),$$
which will be derived in Section \ref{sec-laxi}. Here $z_0:=(0,0,0)\in\R^3$. Let $\Psi_1$ be the solution solving
\begin{equation}\label{def-Psi1}
\begin{cases}
\partial_t\Psi_1=\Delta_{(r,z)} \Psi_1 +\frac{n-4}{r}\partial_r\Psi_1+\mathcal S_{\rm out}[\la_*,\xi_*],~&\mbox{ in }~\mathcal D\times (0,T)\\
\Psi_1=0,~&\mbox{ on }~(\partial\mathcal D\backslash \{r=0\})\times (0,T)\\
\partial_r \Psi_1=0,~&\mbox{ on }~(\mathcal D\cap \{r=0\})\times (0,T)\\
\Psi_1(r,z,0)=0,~&\mbox{ in }~\mathcal D\\
\end{cases}
\end{equation}
where $\mathcal S_{\rm out}[\la_*,\xi_*]$ is defined by replacing $\la$, $\xi$ in $\mathcal S_{\rm out}[\la,\xi]$
by $\la_*$ and $\xi_*$, respectively. Note that the new error produced by $\Psi_1$ turns out to be of smaller order and will not change the leading order terms $\la_*$, $\xi_*$, and in fact, this is the reason for choosing $R(t)$ above. We shall show this in Section \ref{sec-innerouter}.

In conclusion, the corrected approximation we finally choose is
$U^*+\Psi_0+\Psi_1.$
In the sequel, we shall find a perturbation $\mathtt P$ such that $u=U_{\la,\xi}+\Psi_0+\Psi_1+\mathtt P$ is the desired solution, namely,
$$\mathcal S(U_{\la,\xi}+\Psi_0+\Psi_1+\mathtt P)=0.$$


\medskip

\section{The inner--outer gluing scheme}\label{sec-inneroutergluingscheme}

\medskip

We look for solution of the form
$$u=U_{\la(t),\xi(t)}+\Psi_0(r,z,t)+\Psi_1(r,z,t)+\mathtt P,$$
where $\mathtt P$ is a small perturbation consisting of inner and outer parts
\begin{align*}
&~\mathtt P=\la^{-1}(t)\eta_R \phi(y,t)+\psi(r,z,t)+Z^*(x,t),
\end{align*}
where $Z^*$ satisfies
$$  
\begin{cases}
Z_t^*=\Delta_x Z^*,~&\mbox{ in }\Omega\times (0,T)\\
Z^*(\cdot,t)=0,~&\mbox{ on }\partial \Omega\times (0,T)\\
Z^*(\cdot,0)=Z_0^*,~&\mbox{ in }\Omega
\end{cases}
$$ 
in the original variables $x\in\R^n$. Throughout the paper, we choose $R(t)$ such that $\la(t)R(t)\ll \sqrt{T-t}$ for $T\ll 1$.
Denote
$$
\mathcal D_{2R}=\left\{(r,z)\in \R^4:|(r,z)-(\xi_r,0)|\leq 2\la R\right\}$$
and $\Psi^*=\psi+Z^*$. Then $u$ is a solution to the original problem \eqref{eqn} if

\medskip

\noindent $\bullet$ $\phi$ solves the {\em inner problem}
\begin{equation}\label{inner}
\begin{aligned}
\la^2 \phi_t = \Delta_y \phi +3U^2(y)\phi+\mathcal H(\phi,\psi,\la,\xi)~~\mbox{ in }\mathcal D_{2R}\times (0,T)
\end{aligned}
\end{equation}
where
\begin{align} \notag
\mathcal H(\phi,\psi,\la,\xi)(y,t):=&~3\la U^{2}(y)[\Psi_0+\psi+Z^*](\la y+\xi,t)+\frac{(n-4)\la}{\la y_1+\xi_r}\phi_{y_1}\\ \notag
&~+\la\left[\dot\la (\nabla_y\phi\cdot y+\phi)+\nabla_y \phi\cdot\dot\xi\right]+\frac{(n-4)\la^2}{r}\phi\partial_r \eta_R\\ \label{def-mH}
&~+\la^3 \mathcal N(\mathtt P+\Psi_0+\Psi_1)+\la^3 \mathcal K[\la,\xi]+3\la U^2(y)\Psi_1
\end{align}
with $\mathcal K[\la,\xi]$ defined in \eqref{def-mK}.

\medskip

\noindent $\bullet$  $\psi$ solves the {\em outer problem}
\begin{equation}\label{outer}
\psi_t =\Delta_{(r,z)} \psi+\frac{n-4}{r}\partial_r \psi  + \mathcal G(\phi,\psi,\la,\xi)~~\mbox{ in }\mathcal D\times (0,T)
\end{equation}
with
\begin{equation}\label{def-mG}
\begin{aligned}
\mathcal G(\phi,\psi,\la,\xi):=&~3\la^{-2}(1-\eta_R)U^2(y)(\psi+Z^*+\Psi_0+\Psi_1)\\
&~+\la^{-3}\left[(\Delta_y \eta_R) \phi+2\nabla_y\eta_R\cdot\nabla_y \phi-\la^2\phi\partial_t \eta_R\right]\\
&~+(1-\eta_R)\mathcal K_1[\la,\xi]+\mathcal S_{\rm out}[\la,\xi]-\mathcal S_{\rm out}[\la_*,\xi_*]+(1-\eta_R)\mathcal N(\mathtt P+\Psi_0+\Psi_1),
\end{aligned}
\end{equation}
where
\begin{equation}\label{def-mK1}
\mathcal K_1[\la,\xi]:=\frac{2\alpha_0 \la^{-2}(t)\dot{\la}(t)}{(1+|y|^2)^2},
\end{equation}
$\mathcal S_{\rm out}$ is defined in \eqref{def-Sout} and
$$\mathcal N(\mathtt P+\Psi_0+\Psi_1):=(U_{\la,\xi}+\mathtt P+\Psi_0+\Psi_1)^3-U_{\la,\xi}^3-3U_{\la,\xi}^2(\mathtt P+\Psi_0+\Psi_1).
$$ 

\medskip
We now describe our strategy to solve the inner and outer problems. We shall first develop linear theories for the associated linear problems of \eqref{outer} and \eqref{inner}. Since the solution we want to construct concentrates on an $(n-4)$-dimensional sphere with shrinking size $\sqrt{T-t}$, suitable estimates for the outer solution $\psi$ are very delicate to find. To achieve this, we find solutions in the symmetry class \eqref{symmetryclass} by using the Duhamel's formula in $\R^n$. For the linear inner problem, we want to find inner solution with proper space-time decay.
 Since the inner--outer gluing relies on delicate analysis of the space-time decay of solutions, we shall further decompose the inner problem \eqref{inner} into three different spherical harmonic modes and construct solution in each mode. To get more refined estimates for the gluing to work, we carry out a new inner--outer gluing scheme for the linear inner problem, where certain orthogonality conditions are of course needed due to the existence of the nontrivial kernels (see \eqref{kernels}) of the linearized operator $L_0$ in \eqref{def-L0}. This will give us the reduced equations for the parameter functions $\la(t)$ and $\xi_r(t)$. 
 The reduced equation for $\xi_r(t)$ is explicit.
 However, the reduced equation for $\la(t)$ turns out to be an integro-differential equation due to the non-local correction $\Psi_0$ in \eqref{def-Psi0}, and it is more involved. Finally, by using the Schauder fixed point theorem, we solve the inner--outer gluing system and prove the existence of the desired blow-up solution.

\medskip

The rest of the paper is organized as follows. In Section \ref{sec-laxi}, we derive the leading orders for the parameter functions $\la(t)$ and $\xi(t)$. In Section \ref{sec-linearouter}, we establish the estimates for the linear outer problem with different right hand sides which appear in $\mathcal G$ defined in \eqref{def-mG}. The proof is postponed to the Appendix. In Section \ref{sec-linearinner}, we develop the linear theory for the inner problem by spherical harmonic decomposition. In Section \ref{sec-innerouter}, the inner--outer gluing system is formulated, and we shall solve $(\phi,\psi,\la,\xi_r)$ from the full system by the linear theories developed in Section \ref{sec-linearouter}, Section \ref{sec-linearinner} and the Schauder fixed point theorem.

\bigskip
\noindent{{\bf Notation}}. \, Throughout the paper, we shall use the symbol  $``\,\lesssim\,"$ to denote $``\,\leq\, C\,"$ for a positive constant $C$ independent of $t$ and $T$. Here $C$ might be different from line to line.


\medskip

{

\section{The choices of \texorpdfstring{$\lambda_*$}{lambda} and \texorpdfstring{$\xi_*$}{xi}}\label{sec-laxi}

\medskip

In this section, we shall choose the leading orders $\la_*(t)$, $\xi_*(t)=(\xi_{r,*}(t), 0)$ of the parameter functions $\la(t)$ and $\xi(t)$. In Section \ref{sec-linearinner}, a linear theory for the inner problem concerning the solvability and estimates of the associated linear problem will be developed, where approximately the following orthogonality conditions
\begin{equation}\label{oc}
\int_{\R^4} \mathcal H(\phi,\psi,\la,\xi) Z_j (y) dy=0~~\mbox{ for all } j=1,\cdots,5,~  t\in (0,T)
\end{equation}
are needed to guarantee the existence of an inner solution $\phi$ with desired space-time decay. Here $Z_j$ are the kernel functions (c.f. \eqref{kernels}) of the linearized operator $L_0$. Basically, we will derive the scaling and translation parameters $\la(t)$ and $\xi(t)$ at main order from the orthogonality conditions \eqref{oc}.

Recall $\mathcal H(\phi,\psi,\la,\xi)(y,t)$ defined in \eqref{def-mH}. In this section, we shall single out the leading term $\mathcal H_*$ in $\mathcal H$ to derive $\la_*$ and $\xi_*$. We define
\[
\mathcal H_*[\la,\xi,\Psi^*]
:=3\la U^{2}(y)[\Psi_0+\Psi^*](\la y+\xi,t)+\la^3 \mathcal K[\la,\xi],
\]
where $\Psi^*=\psi+Z^*$. The contribution of the rest terms $\mathcal H-\mathcal H_*$ in the orthogonality conditions turns out to be negligible compared to the leading term $\mathcal H_*$. We shall deal with this in Section \ref{sec-innerouter} when we finally solve the inner--outer gluing system.

Then
\[
\int_{\mathbb R^4} \mathcal H_*[\la,\xi,\Psi^*] Z_1(y)dy=0
\]
implies that
\[
\dot{\xi}_r(t) +\frac{n-4}{\xi_r(t)}=o(1),
\]
where $o(1)\to 0$ as $t\nearrow T.$ So the choice of $\xi_r(t)$ at main order is
\[
\xi_{r,*}(t)=\sqrt{2(n-4)(T-t)}.
\]
Next, we consider the reduced equation for $\la(t)$ from
\begin{equation}\label{oc0}
\int_{\R^4} \mathcal H_*[\la,\xi,\Psi^*] Z_5(y)dy=0.
\end{equation}
Recall that the correction $\Psi_0$, improving the slowly decaying error, solves the problem
\[
\partial_t \Psi_0-\partial_{rr}\Psi_0-\frac{n-4}{r}\partial_r \Psi_0-\Delta_{z}\Psi_0
=-2\sqrt2\frac{\dot\la(t)}{|(r,z)-\xi(t)|^2}
\]
in $\R^n\times(0,T)$ with zero initial data. Then using \eqref{axi-heatkernel} one has
\begin{equation}\label{def-Psi0}
\begin{aligned}
\Psi_0(r,z,t)=&~-2\sqrt2\int_0^t \int_{\R^3}\int_{\R_+}\Gamma_n(r,z;\tilde r,\tilde z;t-s)\frac{\dot\la(s)}{|(\tilde r,\tilde z)-\xi(s)|^2} d\tilde r d\tilde z ds\\
=&~-\frac{\sqrt2}{8\pi^{3/2}} r^{\frac{5-n}2}\int_0^t \frac{\dot\la(s)}{(t-s)^{5/2}}\exp\left\{-\frac{r^2}{4(t-s)}\right\} ds\int_{\R^3} \exp\left\{-\frac{|z-\tilde z|^2}{4(t-s)}\right\}d\tilde z\\
&~\qquad\int_{\R_+} \exp\left\{-\frac{\tilde r^2}{4(t-s)}\right\}\frac{\tilde r^{\frac{n-3}{2}}}{(\tilde r-c_n\sqrt{T-s})^2+|\tilde z|^2} I_{\frac{n-5}{2}}\left(\frac{r\tilde r}{2(t-s)}\right) d\tilde r.
\end{aligned}
\end{equation}
Here $I_{\frac{n-5}{2}}$ is the modified Bessel function, 
\[
|(\tilde r,\tilde z)-\xi(s)|^2=(\tilde r-c_n\sqrt{T-s})^2+|\tilde z|^2,
\]
and
\[
\xi(t)\sim\xi_*(t)=(c_n\sqrt{T-t},0)\in\R\times\R^3,
\quad c_n:=\sqrt{2(n-4)}.
\]

\subsection{The contribution of the global correction to the scaling mode}

We now explain the mechanism by which the global correction $\Psi_0$ determines the leading order law of $\lambda(t)$. This is the main new feature in the reduced scaling equation. We shall prove that
\begin{equation}\label{nonlocal-oc0}
\begin{aligned}
    &~\int_{B_{4R}} 3 U^2(y_1,y') \Psi_0\big(\la y_1+c_n\sqrt{T-t},\la y',t\big)  Z_5(y_1,y')dy_1 dy'\\
    =&~C\left(\int_0^{t-(T-t)} \frac{\dot\la(s)}{t-s}\,ds
    +c_n^*\int_{t-(T-t)}^{t-\la^2(t)} \frac{\dot\la(s)}{t-s}\,ds\right)+O(|\dot\la(t)|),
\end{aligned}
\end{equation}
where $C>0$ and
$$
c_n^*=\frac{n-2}{2}.
$$
The proof of the precise asymptotic expansion is given in Appendix \ref{App-nonlocal}. We record here the leading order computation, since it explains the origin of the logarithmic correction in the blow-up rate.

The two asymptotic regimes of the modified Bessel function
\begin{equation}\label{Bessel-main}
    I_{\frac{n-5}{2}}(z)\sim
    \begin{cases}
        \displaystyle
        \frac{z^{\frac{n-5}{2}}}{2^{\frac{n-5}{2}}\Gamma\left(\frac{n-3}{2}\right)},
        & z\to 0,\\[1ex]
        \displaystyle
        \frac{1}{\sqrt{2\pi}}\frac{e^z}{\sqrt z},
        & z\to \infty,
    \end{cases}
\end{equation}
are responsible for the two different coefficients in \eqref{nonlocal-oc0}. Substituting the representation formula \eqref{def-Psi0} into the projection onto $Z_5$, and using $\xi_r(t)\sim c_n\sqrt{T-t}$, we obtain, up to a nonzero dimensional constant,
\begin{align*}
&\int_{B_{4R}} 3U^2(y)Z_5(y)
\Psi_0(\la y_1+c_n\sqrt{T-t},\la y',t)\,dy  \\
&\sim (T-t)^{\frac{5-n}{4}}
\int_0^t \frac{\dot\la(s)}{(t-s)^{3/2}}\,ds
\int_{\mathbb R^3}\int_{-c_n\sqrt{\frac{T-s}{t-s}}}^{\infty}
\mathcal A(\hat r,\hat z,s,t)\,d\hat r\,d\hat z ,
\end{align*}
where
\[
\hat r=\frac{\tilde r-c_n\sqrt{T-s}}{\sqrt{t-s}},
\qquad
\hat z=\frac{\tilde z}{\sqrt{t-s}}.
\]
The relevant point is that the argument of the Bessel function is
\[
\frac{c_n\sqrt{T-t}\,
(\sqrt{t-s}\hat r+c_n\sqrt{T-s})}{2(t-s)}.
\]
Hence the regimes $t-s\gtrsim T-t$ and $\lambda^2(t)\ll t-s\ll T-t$ lead to different Bessel asymptotics.

We split the time integral as
\[
\int_0^t=\int_0^{t-(T-t)}
+\int_{t-(T-t)}^{t-\lambda^2(t)}
+\int_{t-\lambda^2(t)}^t
=:I+J+K.
\]
In the first region the small-argument asymptotic in \eqref{Bessel-main} gives
\[
I=C\int_0^{t-(T-t)}\frac{\dot\lambda(s)}{t-s}\,ds
+O(|\dot\lambda(t)|).
\]
In the intermediate region, the large-argument asymptotic applies. The exponential factor in $I_{\frac{n-5}{2}}$ cancels the corresponding part of the Gaussian kernel, and the remaining axisymmetric Jacobian produces the coefficient
\[
c_n^*=\frac{n-2}{2}.
\]
Consequently,
\[
J=Cc_n^*
\int_{t-(T-t)}^{t-\lambda^2(t)}
\frac{\dot\lambda(s)}{t-s}\,ds
+O(|\dot\lambda(t)|).
\]
Finally, the microscopic region $t-\lambda^2(t)<s<t$ contributes only
\[
K=O(|\dot\lambda(t)|).
\]
Combining these three estimates gives \eqref{nonlocal-oc0}.

Therefore, \eqref{oc0} is reduced to
\begin{equation}\label{oc5''''}
C\left(\int_0^{t-(T-t)} \frac{\dot\la(s)}{t-s}\,ds
+c_n^*\int_{t-(T-t)}^{t-\la^2(t)} \frac{\dot\la(s)}{t-s}\,ds\right)
=-c_0[Z_0^*(q)+\psi(q,0)]+o(1),
\end{equation}
where
\[
c_0:=3\int_{\R^4} U^2(y) Z_5(y) dy<0.
\]
Since $\la(t)$ decreases to $0$ as $t\nearrow T,$
we impose
\[
a_*:=Z_0^*(q)+\psi(q,0)<0.
\]
Now we claim that a good choice of $\la(t)$ at main order is
$$
\dot{\la}(t)=-\frac{c}{|\log(T-t)|^{\frac{n}{n-2}}},
$$ 
where $c>0$ is a constant to be determined later. Indeed, by approximation we get
\begin{align*}
        &~\int_0^{t-(T-t)} \frac{\dot\la(s)}{t-s}\,ds
        +c_n^*\int_{t-(T-t)}^{t-\la^2(t)} \frac{\dot\la(s)}{t-s}\,ds\\
        =&~\int_0^{t-(T-t)} \frac{\dot\la(s)}{t-s}\,ds
        +c_n^*\int_{t-(T-t)}^{t-\la^2(t)} \frac{\dot\la(t)}{t-s}\,ds
        +c_n^*\int_{t-(T-t)}^{t-\la^2(t)} \frac{\dot\la(s)-\dot\la(t)}{t-s}\,ds\\
        =&~\int_0^{t-(T-t)} \frac{\dot\la(s)}{t-s}\,ds
        +c_n^*\dot\la(t)[\log(T-t)-2\log\la(t)]
        +c_n^*\int_{t-(T-t)}^{t-\la^2(t)} \frac{\dot\la(s)-\dot\la(t)}{t-s}\,ds\\
        \approx&~\int_0^t \frac{\dot\la(s)}{T-s}\,ds-c_n^*\dot\la(t)\log(T-t):=\gamma(t).
\end{align*}
Then
\begin{align*}
    |\log(T-t)|^{\frac1{c_n^*}}\dot\gamma(t)
    =&~c_n^*\left(\ddot\la(t)|\log(T-t)|^{\frac{c_n^*+1}{c_n^*}}
    +\frac{c_n^*+1}{c_n^*}\frac{\dot\la(t)}{T-t} |\log(T-t)|^{\frac1{c_n^*}}\right)\\
    =&~c_n^*\frac{d}{dt}\left(\dot\la(t)|\log(T-t)|^{\frac{c_n^*+1}{c_n^*}}\right)\sim 0.
\end{align*}
Therefore, $\la(t)$ can be well approximated by
\[
    \dot{\la}(t)=-\frac{c}{|\log(T-t)|^{\frac{c_n^*+1}{c_n^*}}}
    =-\frac{c}{|\log(T-t)|^{\frac{n}{n-2}}},
    \quad n\geq 5.
\]
The constant $c$ is chosen so that the leading order part of \eqref{oc5''''} is satisfied. Equivalently,
\[
-c\int_{-T}^T \frac{ds}{(T-s)|\log(T-s)|^{\frac{n}{n-2}}}
=\kappa_*:=-\frac{c_0 a_*}{c_n^*C}.
\]
At main order, we obtain
\[
\dot{\la}(t)=\kappa_*\dot{\la}_*(t)
\]
with
\[
\dot{\la}_*(t)=-\frac{2}{n-2}\frac{|\log T|^{\frac{2}{n-2}}}{|\log(T-t)|^{\frac{n}{n-2}}}.
\]
By imposing $\la_*(T)=0$, we finally get
\[
\la_*(t)=\frac{2}{n-2}\frac{|\log T|^{\frac{2}{n-2}}(T-t)}{|\log(T-t)|^{\frac{n}{n-2}}}(1+o(1))~\mbox{ as }~t\nearrow T.
\]
}

\medskip

{
}

\medskip

\section{Linear theory for the outer problem}\label{sec-linearouter}

\medskip

In order to solve the outer problem \eqref{outer}, we need a linear theory for the associated linear problem. We consider
\begin{equation}\label{outer-linear}
\begin{cases}
\psi_t=\Delta_{(r,z)} \psi+\frac{n-4}{r}\partial_r \psi +\mathbf {f}_{\rm out},~&\mbox{ in }\mathcal D\times (0,T)\\
\psi=0,~&\mbox{ on }(\partial \mathcal D\backslash \{r=0\})\times (0,T)\\
\psi_r=0,~&\mbox{ on } (\mathcal D\cap\{r=0\})\times (0,T)\\
\psi(r,z,0)=0,~&\mbox{ in }\mathcal D\\
\end{cases}
\end{equation}
where the non-homogeneous term $\mathbf f_{{\rm out}}$ in \eqref{outer-linear} is assumed to be bounded with respect to the weights appearing in the outer problem \eqref{outer}. Define the weights
\begin{equation}\label{def-weights}
\left\{
\begin{aligned}
&\varrho_1:=\la_*^{\nu-3}(t)R^{-2-\alpha}(t)\chi_{\{|(r,z)-\xi(t)|\leq 2\la_* R\}}\\
&\varrho_2:=\frac{\la_*^{\nu_2}}{|(r,z)-\xi(t)|^2}\chi_{\{\la_* R \leq|(r,z)-\xi(t)|\}}\\
&\varrho_3:=1\\
\end{aligned}
\right.
\end{equation}
where we choose $R=(T-t)^{-1/2}|\log(T-t)|^{\theta}$ with $\theta\in\left(\frac{n-1}{n-2},\frac{n}{n-2}\right)$.

Define the norms
\begin{equation}\label{def-norm**}
\|f\|_{**}:=\sup_{(r,z,t)\in \mathcal D\times(0,T)} \left(\sum\limits_{i=1}^3 \varrho_i(r,z,t)\right)^{-1}|f(r,z,t)|
\end{equation}

\begin{equation}\label{def-norm*}
\begin{aligned}
\|\psi\|_{*}:=&~\frac{\la_*^{-\nu}(0)R^{\alpha-2}(0)}{|\log T|}\|\psi\|_{L^{\infty}(\Omega\times(0,T))}+\frac{\la_*^{\frac12-\nu}(0)R^{\alpha-2}(0)}{|\log T|}\|\nabla\psi\|_{L^{\infty}(\Omega\times(0,T))}\\
&~+\sup_{(r,z,t)\in \mathcal D\times(0,T)}\left[ \frac{\la_*^{-\nu}(t)R^{\alpha-2}(t)}{|\log(T-t)|}\left|\psi(r,z,t)-\psi(r,z,T)\right|\right]\\
&~+\sup_{(r,z,t)\in \mathcal D\times(0,T)}\left[\frac{\la_*^{\frac12-\nu}(t)R^{\alpha-2}(t)}{|\log (T-t)|}\left|\nabla\psi(r,z,t)-\nabla\psi(r,z,T)\right|\right]\\
&~+\sup_{\mathcal D\times I_T} \frac{\la_*^{2-\nu+\gamma}(t_2)R^{2+\alpha}(t_2)}{(t_2-t_1)^{\gamma}} |\psi(r,z,t_2)-\psi(r,z,t_1)|,
\end{aligned}
\end{equation}
where $y=\frac{(r,z)-\xi(t)}{\la(t)}$, $\nu,\alpha\in(0,1)$, $\gamma\in(0,1)$,  and the last supremum is taken over
$$\mathcal D\times I_T=\left\{(r,z,t_1,t_2): ~(r,z)\in\mathcal D, ~0\leq t_1\leq t_2\leq T,~t_2-t_1\leq \frac{1}{10}(T-t_2)\right\}.$$
For problem \eqref{outer-linear}, we have the following proposition.
\begin{prop}\label{outer-apriori}
Let $\psi$ be the solution to problem \eqref{outer-linear} with $\|\mathbf {f}_{\rm out}\|_{**}<+\infty$. Then it holds that
\begin{equation}\label{est-outer-apriori}
\|\psi\|_*\lesssim \|\mathbf {f}_{\rm out}\|_{**}.
\end{equation}
\end{prop}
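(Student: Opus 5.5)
By linearity it suffices to treat $\mathbf f_{\rm out}$ with $|\mathbf f_{\rm out}|\le \varrho_i$ for one $i\in\{1,2,3\}$ at a time; the general case follows by splitting $\mathbf f_{\rm out}=\sum_i \mathbf f_i$ with $|\mathbf f_i|\lesssim \|\mathbf f_{\rm out}\|_{**}\varrho_i$. Since the solution is sought in the symmetry class \eqref{symmetryclass}, I lift \eqref{outer-linear} back to the heat equation in the original variables $x\in\R^n$, with $\psi(x,t)=\psi(r,z,t)$. For $\Omega=\R^n$ I represent
\[
\psi(x,t)=\int_0^t\int_{\R^n}\frac{e^{-|x-w|^2/(4(t-s))}}{(4\pi(t-s))^{n/2}}\,\mathbf f_{\rm out}(w,s)\,dw\,ds .
\]
For bounded $\Omega$, the maximum principle lets me compare with the whole-space solution of $|\mathbf f_{\rm out}|$, and the Dirichlet heat kernel is dominated by the Gaussian. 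For the gradient I use $|\nabla_x G|\lesssim (t-s)^{-1/2}\tilde G$, where $\tilde G$ is a Gaussian with a slightly wider variance. The key geometric fact is that in $\R^n$ the support region $\{|(r,z)-\xi(s)|\le 2\lambda_*R\}$ is a tube of radius $\lambda_* R$ around an $(n-4)$-sphere of radius $\xi_r(s)\sim\sqrt{T-s}$. Its $\R^n$-volume is therefore about $(\lambda_*R)^4(T-s)^{(n-4)/2}$, which is comparable to (and no larger than) $(\lambda_* R)^4\,\xi_r^{n-4}$.

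For each weight I estimate the Duhamel integral by splitting the time integral into $t-s\le \lambda_*^2R^2(t)$, $\lambda_*^2R^2\le t-s\le T-t$, and $s\le t-(T-t)$.

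For $\varrho_1$ with $t-s\ge T-s$ scale, the kernel sees a mass $\lambda_*^{\nu-3}R^{-2-\alpha}(\lambda_*R)^4(T-s)^{(n-4)/2}$ spread at spatial scale $\sqrt{t-s}$. This gives a contribution bounded by $\lambda_*^{\nu+1}R^{2-\alpha}(s)(T-s)^{(n-4)/2}(t-s)^{-n/2}$, which is integrable near the diagonal after using the short-time region's bound $\lambda_*^{\nu-3}R^{-2-\alpha}\,(t-s)$. Since $\lambda_*^2R^2=(T-t)|\log(T-t)|^{2\theta-2n/(n-2)}$, this yields $|\psi|\lesssim \lambda_*^{\nu}R^{-\alpha}\cdot R^{-2}\lambda_*^{-2}\lambda_*^2R^2\lesssim \lambda_*^{\nu}(t)R^{2-\alpha}(t)$ up to the logarithmic factor built into $\|\cdot\|_*$.

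For $\varrho_2$ I view $\lambda_*^{\nu_2}|(r,z)-\xi|^{-2}$ as the $\R^n$ function $|{\rm dist}(x,\text{sphere})|^{-2}$, whose heat evolution over time $t-s$ is of size $\lambda_*^{\nu_2}\log$. Summing over $s$ produces the factor $|\log(T-t)|$. I expect the constraint between $\nu_2$ and $\nu$ (implicit in the paper) to give $\lambda_*^{\nu_2}\lesssim \lambda_*^{\nu}R^{2-\alpha}$. $\varrho_3=1$ is trivial on $(0,T)$ with $T$ small, since $|\psi|\le T$.

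The $L^\infty$ and gradient bounds then follow, with $\lambda_*(0),R(0)$ giving the global normalizations.

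For the differences $\psi(\cdot,t)-\psi(\cdot,T)$ and $\nabla\psi(\cdot,t)-\nabla\psi(\cdot,T)$, I write $\psi(T)-\psi(t)$ as the Duhamel integral over $[t,T]$ plus $(e^{(T-t)\Delta}-I)\psi(t)$. The first piece is bounded by the same computation restricted to $s\in[t,T]$. The second is controlled via the gradient estimate together with the parabolic smallness of the kernel difference for $T-t$ small relative to the scale $\sqrt{T-s}$ of the source.

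The Hölder-in-time seminorm on $I_T$ comes from standard interpolation: the difference over $[t_1,t_2]$ is bounded by $|t_2-t_1|\,\|\partial_t\psi\|$ away from the source and by $|t_2-t_1|^{\gamma}$ times the local parabolic $C^{2\gamma}$ bound near the tube. This produces the weight $\lambda_*^{2-\nu+\gamma}R^{2+\alpha}$.

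The main obstacle I expect is the $\varrho_1$ and $\varrho_2$ estimates in the intermediate regime $\lambda_*^2R^2\ll t-s\lesssim T-t$. There the Gaussian must be integrated against a thin curved tube whose radius $\xi_r\sim\sqrt{T-s}$ is comparable to the diffusion length. Neither the flat four-dimensional approximation nor the point-source approximation is valid, and the effective dimension crosses over from $4$ to $n$; this is exactly the Bessel-function crossover seen in \eqref{nonlocal-oc0}. My first approach would be to use the axisymmetric kernel $\Gamma_n$ with the two Bessel asymptotics \eqref{Bessel-main} to bound the tube integral by $\min\{(t-s)^{-2},(T-s)^{(n-4)/2}(t-s)^{-n/2}\}(\lambda_*R)^4$. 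I would then check that the resulting time integrals close with the chosen $\theta\in(\frac{n-1}{n-2},\frac n{n-2})$.
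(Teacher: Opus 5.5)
Your route is the paper's. You split $\mathbf f_{\rm out}$ along $\varrho_1,\varrho_2,\varrho_3$, lift to $\R^n$, apply Duhamel with the Euclidean Gaussian, cut the time integral into near-diagonal, intermediate and far-past pieces, and use that the tube has $\R^n$-volume $\approx(\lambda_*R)^4(T-s)^{\frac{n-4}{2}}$. This is exactly the paper's proof of Lemma \ref{lemma-rhs1}, with Lemmas \ref{lemma-rhs2} and \ref{lemma-rhs3} declared similar.

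The intermediate regime you flag as the main obstacle is not one. Integrating the Euclidean Gaussian over the $n-4$ directions tangent to the sphere already gives your bound $(\lambda_*R)^4\min\{(t-s)^{-2},(T-s)^{\frac{n-4}{2}}(t-s)^{-\frac n2}\}$, so $\Gamma_n$ and Bessel asymptotics are not needed; the paper uses only the cruder second term there. With it, the near-diagonal and intermediate pieces for $\varrho_1$ are $\lambda_*^{\nu-3}R^{-2-\alpha}\cdot\lambda_*^2R^2=\lambda_*^{\nu-1}R^{-\alpha}$. Your displayed product drops a factor $\lambda_*^{-1}$. The correct value is still $\ll\lambda_*^{\nu}R^{2-\alpha}$, because $\lambda_*R^2\approx|\log T|^{\frac{2}{n-2}}|\log(T-t)|^{2\theta-\frac{n}{n-2}}$ is large. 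The far past gives $\lambda_*^{\nu}(0)R^{2-\alpha}(0)$ because $\nu-1+\frac{\alpha}{2}>0$. Your use of $\nu_2>\nu-1+\frac\alpha2$ for $\varrho_2$ is Remark \ref{remark6.1}. For bounded $\Omega$, comparison controls only $|\psi|$; for derivatives and time differences use the paper's splitting $\psi=\psi_0+\psi_1$.

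The step that fails is ``the gradient bounds then follow, with $\lambda_*(0),R(0)$ giving the global normalizations''.
\begin{itemize}
\item Near the tube, the near-diagonal and intermediate pieces give $|\nabla\psi|\lesssim\lambda_*^{\nu-3}R^{-2-\alpha}\cdot\lambda_*R=\lambda_*^{\nu-2}(t)R^{-1-\alpha}(t)$.
\item The far-past integral $\int_0^{t-(T-t)}\lambda_*^{\nu+1}R^{2-\alpha}(s)(T-s)^{-\frac52}\,ds$ has integrand $\approx(T-s)^{\nu+\frac\alpha2-\frac52}$ with exponent below $-1$. So it is dominated by $s\approx t-(T-t)$, not by $s\approx 0$.
\item Both are at most $\lambda_*^{\nu-\frac12}(t)R^{2-\alpha}(t)\approx(T-t)^{\nu-\frac32+\frac\alpha2}$ up to logarithms, which diverges as $t\nearrow T$ since $\nu+\frac\alpha2<\frac32$.
\end{itemize}
This is sharp. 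Take $f=\varrho_1$. The contribution of $s\in[t-\lambda_*^2R^2,t]$ alone makes $\psi(\cdot,t)$ drop by $\gtrsim\lambda_*^{\nu-1}R^{-\alpha}$ from the core of the tube to $z$-distance $K\lambda_*R$, and every other contribution is nonincreasing in $|z|$. Hence $\|\nabla\psi\|_{L^\infty(\Omega\times(0,T))}=\infty$ although $\|f\|_{**}\le 1$. The same scaling gives $\psi(0,z,T)-\psi(0,0,T)\approx-|z|^{2\nu+\alpha-2}$ up to logarithms, with $0<2\nu+\alpha-2<1$. So $\nabla\psi(\cdot,T)$ is unbounded near the origin, and the $\nabla\psi(\cdot,t)-\nabla\psi(\cdot,T)$ term of $\|\cdot\|_*$ is infinite as well.

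The paper's proof of \eqref{outergradient-rhs1} has the same defect: its bound \eqref{grad-rhs13} is $\lambda_*^{\nu-\frac52}(t)R^{-2-\alpha}(t)$, and the integral in \eqref{grad-rhs11} is dominated at its upper endpoint. So the gradient part of $\|\cdot\|_*$ must be weighted at time $t$, and localized for the comparison with time $T$, for the proposition to hold. What your argument, like the paper's, actually proves is $\|\nabla\psi(\cdot,t)\|_\infty\lesssim\lambda_*^{\nu-\frac12}(t)R^{2-\alpha}(t)$ up to logarithmic factors.

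Use these time-dependent gradient bounds in your $T$-difference step too. $\sqrt{T-t}$ times the sharp pieces $\lambda_*^{\nu-2}R^{-1-\alpha}$ and $\lambda_*^{\nu+1}R^{2-\alpha}(T-t)^{-\frac32}$ is $\ll\lambda_*^{\nu}(t)R^{2-\alpha}(t)$, and that is what controls $(e^{(T-t)\Delta}-I)\psi(t)$.
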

In order to establish Proposition \ref{outer-apriori}, we consider
\begin{equation}\label{outer-linear''}
\begin{cases}
\psi_t=\Delta_{\R^n} \psi +f,~&\mbox{ in }\Omega\times (0,T)\\
\psi=0,~&\mbox{ on }\partial \Omega\times (0,T)\\
\psi(x,0)=0,~&\mbox{ in }\Omega\\
\end{cases}
\end{equation}
which is defined in $\Omega\subset\R^n$ in the symmetry class \eqref{symmetryclass}. For problem \eqref{outer-linear''}, we prove the following three lemmata concerning the a priori estimates with different right hand sides.

\begin{lemma}\label{lemma-rhs1}
Let $\psi$ solve problem \eqref{outer-linear''} with right hand side
$$|f(x,t)|\lesssim \la_*^{\nu-3}(t)R^{-2-\alpha}(t)\chi_{\{|(r,z)-\xi(t)|\leq 2\la_* R\}}.$$ If
$$\nu-3+\frac12(2+\alpha)<0,~\nu-1+\frac{\alpha}2>0,$$
then
\begin{equation}\label{outer-rhs1}
|\psi(x,t)|\lesssim\la_*^{\nu}(0) R^{2-\alpha}(0)|\log T|,
\end{equation}
\begin{equation}\label{outerT-rhs1}
|\psi(x,t)-\psi(x,T)|\lesssim \la_*^{\nu}(t) R^{2-\alpha}(t)|\log(T-t)|,
\end{equation}
\begin{equation}\label{outergradient-rhs1}
|\nabla \psi(x,t)|\lesssim\la_*^{\nu-\frac12}(0)R^{2-\alpha}(0)|\log T|,
\end{equation}
\begin{equation}\label{outergradientT-rhs1}
|\nabla \psi(x,t)-\nabla \psi(x,T)|\lesssim\la_*^{\nu-\frac12}(t)R^{2-\alpha}(t)|\log(T-t)|,
\end{equation}
and
\begin{equation}\label{outerholder-rhs1}
|\psi(x,t_2)-\psi(x,t_1)|\lesssim  \la_*^{\nu+\frac{\mu}{2}-3}(t_2)R^{-2-\alpha}(t_2) (t_2-t_1)^{1-\mu/2},
\end{equation}
where $0\leq t_1\leq t_2\leq T$ with $t_2-t_1\leq \frac{1}{10}(T-t_2)$ and $\mu\in(0,1)$.
\end{lemma}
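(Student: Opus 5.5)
We will prove the estimates by working in the original variables $x\in\R^n$, representing $\psi$ through Duhamel's formula with the heat kernel of $\Omega$. We bound that kernel by the Euclidean Gaussian $G(x,t)=(4\pi t)^{-n/2}e^{-|x|^2/4t}$, and by its gradient bound $|\nabla G|\lesssim t^{-\frac{n+1}{2}}e^{-|x|^2/8t}$. The key geometric input is the support of $f$ at time $s$: it is the tube $\mathcal T_s=\{|(r,z)-\xi(s)|\le 2\la_*(s)R(s)\}$ around an $(n-4)$-sphere of radius $\xi_r(s)\sim c_n\sqrt{T-s}$. Since $\la_*R\ll\sqrt{T-s}$, its $n$-dimensional volume is
\[
|\mathcal T_s|\sim \xi_r(s)^{n-4}(\la_*R)^4(s).
\]
Moreover, for $t-s\lesssim(\la_* R)^2(s)$ it looks locally like a $4$-dimensional ball, while for $t-s\gtrsim T-s$ it looks like a point of mass $|\mathcal T_s|$.

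\textbf{Step 1: the $L^\infty$ bound \eqref{outer-rhs1}.} We write $|\psi(x,t)|\lesssim\int_0^t \la_*^{\nu-3}R^{-2-\alpha}(s)\int_{\mathcal T_s}G(x-w,t-s)\,dw\,ds$ and estimate the inner integral in three regimes.
\begin{itemize}
\item If $t-s\le (\la_*R)^2(s)$, the inner integral is at most $1$.
\item If $(\la_*R)^2(s)\le t-s\le T-s$, the sphere is seen as a flat $(n-4)$-manifold, so the inner integral is at most $(\la_*R)^4(s)(t-s)^{-2}$.
\item If $t-s\ge T-s$, the inner integral is at most $\xi_r^{n-4}(\la_*R)^4(t-s)^{-n/2}\lesssim(\la_*R)^4(T-s)^{-2}$.
\end{itemize}
Summing the regimes, the contribution of each $s$ is bounded by $\la_*^{\nu-3}R^{-2-\alpha}(s)\min\{1,(\la_*R)^4(t-s)^{-2}\}$. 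Integrating in $t-s$ gives a factor $(\la_*R)^2(s)$, so the relevant time integral behaves like $\int \la_*^{\nu-1}R^{-\alpha}(s)\,\frac{ds}{\text{scale}}$.

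Since $\la_*R\sim\sqrt{T-s}\,|\log(T-s)|^{\theta-n/(n-2)}$, we have $(\la_*R)^2\sim T-s$ up to logarithms. The integrand is therefore effectively $\la_*^{\nu-1}R^{-\alpha}\sim (T-s)^{\nu-1+\alpha/2}\times\log$, integrated against $ds/(T-s)$. The hypothesis $\nu-1+\alpha/2>0$ makes this dominated by $s\approx 0$, which produces $\la_*^\nu(0)R^{2-\alpha}(0)$ times a logarithmic factor. That factor comes from the intermediate regime $\int (t-s)^{-1}$ over $[(\la_*R)^2,T-s]$, and it is where $|\log T|$ appears. The condition $\nu-3+\frac12(2+\alpha)<0$ is used in Step 3 to control the $\mu$-Hölder exponent.

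\textbf{Step 2: the difference bounds \eqref{outerT-rhs1} and \eqref{outergradientT-rhs1}.} We write
\[
\psi(x,t)-\psi(x,T)=-\int_t^T\!\!\int G(x-w,T-s)f\,dw\,ds+\int_0^t\!\!\int\big[G(t-s)-G(T-s)\big]f\,dw\,ds.
\]
The first term is Step 1 restricted to $s\in[t,T]$, which gives $\la_*^\nu R^{2-\alpha}(t)|\log(T-t)|$. For the second term, we split at $s=t-(T-t)$.
\begin{itemize}
\item For $s<t-(T-t)$, the mean value theorem gives $|\partial_tG|\lesssim (T-t)(t-s)^{-1}G$. This yields the same type of integral gaining a factor $(T-t)/(t-s)$, which makes it converge at $s\sim t$.
\item For $s\ge t-(T-t)$, we estimate each term separately as in Step 1.
\end{itemize}
The gradient versions repeat these steps with $|\nabla G|$. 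The extra $(t-s)^{-1/2}$ costs $(\la_*R)^{-1}$ in the near regime, which we convert to the stated $\la_*^{-1/2}$ loss using $\la_*R\gtrsim\la_*^{1/2}$ up to logarithms.

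\textbf{Step 3: the Hölder bound \eqref{outerholder-rhs1}, which we expect to be the main obstacle.} We write
\[
\psi(t_2)-\psi(t_1)=\int_{t_1}^{t_2}\!\!\int G(t_2-s)f+\int_0^{t_1}\!\!\int\big[G(t_2-s)-G(t_1-s)\big]f.
\]
We interpolate $|G(t_2-s)-G(t_1-s)|\lesssim \min\{1,(t_2-t_1)/(t_1-s)\}$ against the kernels, introducing the parameter $\mu$. The recent times $s\in[t_1-(t_2-t_1),t_2]$ contribute at most $\la_*^{\nu-3}R^{-2-\alpha}(t_2)(t_2-t_1)$, because $f$ is bounded pointwise. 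The earlier times contribute $\int \la_*^{\nu-3}R^{-2-\alpha}(t_2-t_1)^{1-\mu/2}(t_1-s)^{\mu/2-1}\min\{1,(\la_*R)^4/(t_1-s)^2\}\,ds$. Restricting to $t_2-t_1\le\frac1{10}(T-t_2)$ lets us freeze $\la_*(s)\approx\la_*(t_2)$ on the relevant window.

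The delicate point is to show that this yields $\la_*^{\nu+\mu/2-3}R^{-2-\alpha}(t_2)$ without further logarithmic loss. Naively the near-time piece gives the factor $(\la_*R)^{\mu}$ rather than $\la_*^{\mu/2}$, and we will reconcile the two by absorbing the $R$-powers into the weights. For times far in the past, the decay $(t_1-s)^{\mu/2-1}\cdot(\la_*R)^4/(t_1-s)^2$ is integrable. There we need $\nu-3+\frac{2+\alpha}2<0$ to ensure the past contribution is not larger than the present one.
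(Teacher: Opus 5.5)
Your treatment of \eqref{outer-rhs1} and \eqref{outerT-rhs1} is sound. It follows the paper's route: Duhamel's formula, a split at $s=t-(T-t)$, and the mean value theorem in time for the far past. Your tube bound $\min\{1,(\la_*R)^4(t-s)^{-2}\}$ is sharper than the paper's treatment of $I_{12},I_{13}$; it shows the recent times contribute only $\la_*^{\nu-1}R^{-\alpha}(t)$, with no logarithm. (Your third regime ``$t-s\ge T-s$'' is empty; you mean $t-s\gtrsim T-t$.) The gradient and H\"older parts, however, fail, and they cannot be repaired as stated.

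\textbf{Gradient.} In Step 1 you could pass to time $0$ because $\la_*^{\nu}R^{2-\alpha}\approx (T-t)^{\nu-1+\frac\alpha2}$ (up to logarithms) decreases as $t\nearrow T$. With the extra factor $(t-s)^{-1/2}$ this monotonicity reverses.
The recent times give $\la_*^{\nu-3}R^{-2-\alpha}\cdot\la_*R=\la_*^{\nu-2}R^{-1-\alpha}(t)$.
The far past gives $\int_0^{t-(T-t)}\la_*^{\nu+1}R^{2-\alpha}(s)(T-s)^{-5/2}\,ds$, whose integrand has exponent $\nu+\frac\alpha2-\frac52<-1$ and so concentrates at $s\approx t$.
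Both pieces are of order $(T-t)^{\nu+\frac\alpha2-\frac32}\to\infty$ for $\nu,\alpha\in(0,1)$. For \eqref{outergradientT-rhs1}, the analogous bound for $\int_t^T\!\int|\nabla G(x-w,T-s)||f|$ diverges at $s=T$.

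This is a genuine obstruction, not a lossy estimate. Take $f=\la_*^{\nu-3}R^{-2-\alpha}\chi_{\{|(r,z)-\xi|\le 2\la_*R\}}\ge0$ on $\R^n$. Let $x_c$ lie on the core sphere and let $x_e$ be at distance $10\la_*R$ from $x_c$ in a $z$-direction. Every $z$-slice of the tube is a centered ball, so every time slice contributes nonnegatively to $\psi(x_c,t)-\psi(x_e,t)$. The slices with $t-s\le(\la_*R)^2$ alone give at least $c\,\la_*^{\nu-1}R^{-\alpha}(t)$. Hence $\sup_x|\nabla\psi(x,t)|\gtrsim\la_*^{\nu-2}R^{-1-\alpha}(t)\to\infty$. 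Likewise, up to logarithms, $\psi(x,T)\approx\psi(0,T)-c|x|^{2\nu+\alpha-2}$ near the origin, so $\nabla\psi(\cdot,T)\notin L^\infty$.

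So \eqref{outergradient-rhs1} and \eqref{outergradientT-rhs1} are false as written. What your argument does prove is $|\nabla\psi(x,t)|\lesssim\la_*^{\nu-\frac12}(t)R^{2-\alpha}(t)$. The paper's proof has the same defect. In \eqref{grad-rhs11} it bounds the far-past integral by $\la_*^{\nu-\frac12}(0)R^{2-\alpha}(0)|\log T|$. In \eqref{grad-rhs13} with $\delta_2=1$ it leaves the term $\la_*^{\nu-\frac52}R^{-2-\alpha}(t)$, which is unbounded.

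\textbf{H\"older.} You are right that the recent times produce $(\la_*R)^\mu$ instead of $\la_*^{\mu/2}$. ``Absorbing the $R$-powers'' is impossible, because $(\la_*R)^\mu/\la_*^{\mu/2}=(\la_*R^2)^{\mu/2}$ is unbounded: $\la_*R^2\sim|\log T|^{\frac2{n-2}}|\log(T-t)|^{2\theta-\frac n{n-2}}$ with $2\theta>\frac n{n-2}$.

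A concrete obstruction: take the same $f$ but switched on only at $t_1$, with $t_2-t_1=(\la_*R)^2(t_2)$. This is admissible for $t_2$ near $T$, since $\la_*R\ll\sqrt{T-t_2}$. Then $\psi(\cdot,t_1)=0$, while on the core sphere at time $t_2$ one has $\psi\gtrsim\la_*^{\nu-3}R^{-2-\alpha}(\la_*R)^2$. This exceeds the right side of \eqref{outerholder-rhs1} by the factor $(\la_*R^2)^{\mu/2}$.

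So \eqref{outerholder-rhs1} holds only with something larger than $\la_*^{\mu/2}$ in its place: either $(\la_*R)^\mu$ (your bound) or $(T-t_2)^{\mu/2}$. The latter is what the paper's $J_{11}$--$J_{13}$ computation actually yields before $(T-s)^{\mu/2}$ is replaced by $\la_*^{\mu/2}$ without accounting for the logarithms. Either way, the estimate holds only up to an unbounded logarithmic factor. You should prove these corrected statements rather than look for a sharper argument.
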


\begin{lemma}\label{lemma-rhs2}
Let $\psi$ solve problem \eqref{outer-linear''} with right hand side
$$|f(x,t)|\lesssim \frac{\la_*^{\nu_2}}{|(r,z)-\xi(t)|^2}\chi_{\{\la_* R\leq|(r,z)-\xi(t)|\}},$$
where $\nu_2\in(0,1)$, $\delta>0$.
Then
$$
|\psi(x,t)|\lesssim  \la_*^{\nu_2-1}(0)R^{-2}(0)|\log T|,
$$ 
$$
|\psi(x,t)-\psi(x,T)|\lesssim  \la_*^{\nu_2-1}(t)R^{-2}(t)|\log (T-t)|,
$$ 
$$|\nabla \psi(x,t)|\lesssim \la_*^{\nu_2-2}(t)R^{-2}(t)\sqrt{t},
$$ 
$$|\nabla \psi(x,t)-\nabla \psi(x,T)|\lesssim \la_*^{\nu_2-\frac32}(t)R^{-2}(t)|\log (T-t)|,
$$ 
and
$$ 
|\psi(x,t_2)-\psi(x,t_1)|\lesssim \la_*^{\nu_2-1-\gamma}(t_2) R^{-2}(t_2) (t_2-t_1)^{\gamma},
$$ 
where $0\leq t_1\leq t_2\leq T$ with $t_2-t_1\leq \frac{1}{10}(T-t_2)$ and $\gamma\in(0,1)$.
\end{lemma}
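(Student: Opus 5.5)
We bound $\psi$ through Duhamel's formula for the heat equation in $\R^n$ (or the Dirichlet heat kernel of $\Omega$, which is dominated by the free one):
\[
\psi(x,t)=\int_0^t\int_{\Omega}G(x,w,t-s)f(w,s)\,dw\,ds ,
\qquad
|G(x,w,\tau)|\lesssim \tau^{-n/2}e^{-|x-w|^2/(C\tau)}.
\]
The source is supported where $|(\tilde r,\tilde z)-\xi(s)|\ge \la_*R(s)$. In the $n$-dimensional variables $w=(w^*,w^{**})$ with $\tilde r=|w^*|$, this support is the complement of a thin tube of radius $\la_*R$ around the sphere $\{|w^*|=\xi_r(s),\,w^{**}=0\}$. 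The volume element is $\tilde r^{n-4}\,d\tilde r\,d\tilde z$ times the area of $S^{n-4}$. Since $\la_*R=\sqrt{T-s}\,|\log(T-s)|^{\theta-\frac n{n-2}}\ll\xi_r(s)\sim\sqrt{T-s}$, the weight $|(\tilde r,\tilde z)-\xi(s)|^{-2}$ is $4$-dimensional-type singular near the tube. It is integrable in the transverse $4$D variables with a logarithmic loss only at scale $\la_*R$. I will decompose according to whether $|(\tilde r,\tilde z)-\xi(s)|\le \sqrt{T-s}$ (the near region, where $\tilde r\sim\xi_r(s)$) or not (the far region, where the weight behaves like $\la_*^{\nu_2}|w|^{-2}$ with $|w|\gtrsim \sqrt{T-s}$).

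\textbf{Sup bound.} In the far region, $\la_*^{\nu_2}(s)|w|^{-2}\lesssim \la_*^{\nu_2}(s)(T-s)^{-1}$. The heat kernel integrates to at most one, so this piece contributes $\int_0^t \la_*^{\nu_2}(s)(T-s)^{-1}\,ds$. Because
\[
\la_*^{\nu_2-1}R^{-2}\sim \la_*^{\nu_2}(T-s)^{-1}\cdot\frac{T-s}{\la_* R^2}
\quad\text{and}\quad
\frac{T-s}{\la_*R^2}\sim |\log|^{\frac n{n-2}-2\theta}\lesssim 1\ (\text{as } \theta>\tfrac{n-1}{n-2}),
\]
this contribution is of the form $\la_*^{\nu_2-1}R^{-2}|\log|$ up to harmless log powers. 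To be safe I will compare the integrand directly with $\frac{d}{ds}$ of the target quantity.

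In the near region I use $\sup_w G\lesssim \tau^{-n/2}$ for small $\tau=t-s$, together with the $L^1$ mass of the source over the near region,
\[
\la_*^{\nu_2}\,\xi_r^{n-4}\int_{\la_*R}^{\sqrt{T-s}}\rho^{-2}\rho^{3}\,d\rho\sim \la_*^{\nu_2}(T-s)^{\frac{n-2}{2}}.
\]
For $\tau\gtrsim T-s$ this gives a bound by $\la_*^{\nu_2}(T-s)^{\frac{n-2}2}\tau^{-n/2}$. For $\tau\lesssim T-s$ I instead use the pointwise bound $\la_*^{\nu_2}/(\la_*R)^2$ times the unit kernel mass, integrated over $s\in(t-(T-t),t)$. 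That gives $\la_*^{\nu_2-2}R^{-2}(T-t)\sim\la_*^{\nu_2-1}R^{-2}\cdot\frac{T-t}{\la_*}$. This is too crude, so I will refine it by splitting the spatial integral dyadically in $\rho=|(\tilde r,\tilde z)-\xi(s)|$ and using the $4$D transverse Gaussian. That yields
\[
\int \frac{\la_*^{\nu_2}}{\rho^2}\,\tau^{-2}e^{-\rho^2/\tau}\rho^3\,d\rho\lesssim \la_*^{\nu_2}\tau^{-1}.
\]
Integrating in $s$ then gives $\la_*^{\nu_2}\log\frac{T-t}{\la_*^2R^2}$, which is bounded by the claimed $\la_*^{\nu_2-1}R^{-2}|\log(T-t)|$ since $\la_*R^2\gtrsim 1$ up to logs. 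The case $\tau\ge T-s$ integrates to the same order. Evaluating at $t$ near $0$ gives the first estimate, and the estimate for $\psi(x,t)-\psi(x,T)$ follows from the same bound applied on $[t,T]$ plus the difference of kernels on $[0,t]$.

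\textbf{Gradient and H\"older bounds.} The gradient estimates use $|\nabla G|\lesssim \tau^{-1/2}$ times a Gaussian, which costs one factor $\tau^{-1/2}\sim\la_*^{-1/2}$-type in the near-tube time layer. For the stated $\nabla\psi$ bound, the crude global estimate $\int_0^t\tau^{-1/2}\cdots$ produces the $\sqrt t$ factor. The H\"older bound in time follows by splitting at $s=2t_1-t_2$: the recent part is bounded directly using the pointwise source bound, and the remaining part uses $|\partial_t G|\lesssim \tau^{-1}G$, interpolated to exponent $\gamma$.

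\textbf{Main obstacle.} The delicate step is the near-tube layer $\tau\lesssim T-t$. There the geometry changes between the $n$-dimensional picture (the kernel sees the whole sphere) and the $4$-dimensional transverse one, and the logarithm must be tracked exactly rather than lost in powers of $\la_*$. I will handle it by locally flattening the sphere: for $\tau\ll \xi_r^2$ the $n$D kernel restricted to the symmetry class reduces, via the Bessel asymptotics \eqref{Bessel-main}, to a $4$D Gaussian up to a factor $(\xi_r/r)^{\frac{n-4}{2}}\sim1$.
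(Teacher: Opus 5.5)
There is a genuine gap, and it lies in the logarithmic bookkeeping, which is the whole content of these estimates. Your route (Duhamel in $\R^n$, splitting in time at $t-(T-t)$, flattening the tube to a $4$D transverse problem) is what the paper presumably has in mind: it gives no proof and only says the argument is ``similar'' to Lemma \ref{lemma-rhs1}. From the definitions,
\[
\lambda_*R^2=\tfrac{2}{n-2}\,|\log T|^{\frac{2}{n-2}}\,|\log(T-t)|^{2\theta-\frac{n}{n-2}},\qquad 2\theta-\tfrac{n}{n-2}>1 .
\]
So the first two right-hand sides are $\lambda_*^{\nu_2}$ times $|\log T|/(\lambda_*(0)R^2(0))=\frac{n-2}{2}|\log T|^{2-2\theta}$, respectively times $|\log(T-t)|/(\lambda_*R^2)$. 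Both factors are \emph{negative} powers of the logarithm.

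Your bounds do not reach these targets:
\begin{itemize}
\item The far-region bound $\int_0^t\lambda_*^{\nu_2}(s)(T-s)^{-1}\,ds$ is of order $\lambda_*^{\nu_2}(0)$.
\item The near-region bound is $\lambda_*^{\nu_2}(t)\log\frac{T-t}{\lambda_*^2R^2}$.
\end{itemize}
Both exceed the targets by unbounded logarithmic factors. The two comparisons you use to close the argument point the wrong way. The step ``$\frac{T-s}{\lambda_*R^2}\lesssim 1$'' is also miscomputed, since $\lambda_*R^2$ is already a pure logarithmic quantity; and even granting it, it would show the target is \emph{smaller} than your integrand. Likewise ``$\lambda_*R^2\gtrsim 1$'' makes $\lambda_*^{\nu_2-1}R^{-2}|\log(T-t)|=\lambda_*^{\nu_2}\,|\log(T-t)|/(\lambda_*R^2)$ smaller than $\lambda_*^{\nu_2}$, not larger. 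These log powers are not harmless.

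The losses are not mere crudeness, so refining your estimates will not fix them.
\begin{itemize}
\item Your premise $\lambda_*R\ll\xi_r(s)$ drops the factor $\frac{2}{n-2}|\log T|^{\frac{2}{n-2}}$ in $\lambda_*$. For example, $\lambda_*(0)R(0)=\frac{2}{n-2}\sqrt{T}\,|\log T|^{\theta-1}\gg\sqrt T$.
\item If the premise held for all $s$, the far field alone would give $\psi(0,t)\gtrsim\lambda_*^{\nu_2}(0)$ for $f=\varrho_2$, contradicting the first estimate.
\item The first estimate is true precisely because $\lambda_*R\gg\sqrt{T-s}$ on most of $(0,T)$. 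It then follows at once from $|\psi(x,t)|\le\int_0^t\|f(s)\|_\infty\,ds\lesssim\int_0^t\lambda_*^{\nu_2-2}R^{-2}\,ds$ together with $T/\lambda_*(0)=\frac{n-2}{2}|\log T|$.
\end{itemize}

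The $4$D logarithm you find near the tube is also genuine, and it rules out the second estimate as stated. At late times, when $\lambda_*R\ll\sqrt{T-t}$, take $f=\varrho_2$ and $x$ at transverse distance $2\lambda_*R$ from the tube.
\begin{itemize}
\item The Duhamel contribution of $s\in(t-(T-t),t)$ is $\gtrsim\lambda_*^{\nu_2}(t)\log\frac{T-t}{(\lambda_*R)^2}$.
\item All other parts of $\psi$ change by only $O(\lambda_*^{\nu_2}(t))$ between $t$ and $T$.
\end{itemize}
Hence $|\psi(x,t)-\psi(x,T)|\gtrsim\lambda_*^{\nu_2}(t)\log\frac{T-t}{(\lambda_*R)^2}$, whereas the stated bound is $o(\lambda_*^{\nu_2}(t))$. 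No refinement of the near-tube layer can yield the second estimate with its stated logarithmic factor. What your Duhamel splitting actually gives is $\lambda_*^{\nu_2}$ up to logarithms, that is, the stated bounds only modulo powers of $|\log(T-t)|$.
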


\begin{lemma}\label{lemma-rhs3}
Let $\psi$ solve problem \eqref{outer-linear''} with right hand side
$$|f(x,t)|\lesssim 1.$$
Then
$$|\psi(x,t)|\lesssim t,
$$ 
$$|\psi(x,t)|\lesssim (T-t)|\log(T-t)|,
$$
$$ 
|\nabla\psi(x,t)|\lesssim T^{1/2},
$$ 
$$ 
|\nabla\psi(x,t)-\nabla\psi(x,T)|\lesssim (T-t)^{1/2},
$$ 
and
$$|\psi(x,t_2)-\psi(x,t_1)|\lesssim (t_2-t_1)|\log (t_2-t_1)|,
$$ 
where $0\leq t_1\leq t_2\leq T$ with $t_2-t_1\leq \frac{1}{10}(T-t_2)$.
\end{lemma}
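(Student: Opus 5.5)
We prove Lemma \ref{lemma-rhs3} using Duhamel's formula for \eqref{outer-linear''} and the comparison principle. In the case $\Omega=\R^n$ we have
\[
\psi(x,t)=\int_0^t\int_{\R^n} G(x-w,t-s)f(w,s)\,dw\,ds,
\qquad G(x,t)=(4\pi t)^{-n/2}e^{-|x|^2/(4t)}.
\]
In the bounded-domain case the Dirichlet heat kernel is pointwise dominated by $G$, and its gradient satisfies the analogous Gaussian bound. So it suffices to work with $G$, up to constants depending on $\Omega$.

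\textbf{Sup bounds.} Since $\int G\,dw=1$, we get $|\psi(x,t)|\le\int_0^t\|f(s)\|_\infty ds\lesssim t$. The bound $(T-t)|\log(T-t)|$ is weaker than $t\le T$ near $t=0$ but is an independent quantitative statement near $t=T$. Because of the normalization in \eqref{def-norm*}, I read it as the estimate for $\psi(x,t)-\psi(x,T)$. For that difference I write
\[
\psi(x,T)-\psi(x,t)=\int_t^T\!\!\int G(x-w,T-s)f\,dw\,ds+\int_0^t\!\!\int\big[G(x-w,T-s)-G(x-w,t-s)\big]f\,dw\,ds.
\]
The first term is $O(T-t)$. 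In the second term I use $|\partial_\tau G(\cdot,\tau)|_{L^1}\lesssim\tau^{-1}$, which gives
\[
\int_0^t\min\Big\{2,\frac{T-t}{t-s}\Big\}ds\lesssim (T-t)\big(1+|\log(T-t)|\big).
\]
This is where the logarithm comes from.

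\textbf{Gradient bounds.} Using $\|\nabla G(\cdot,\tau)\|_{L^1}\lesssim\tau^{-1/2}$ yields $|\nabla\psi|\lesssim\int_0^t(t-s)^{-1/2}ds\lesssim T^{1/2}$. For $\nabla\psi(x,t)-\nabla\psi(x,T)$ I use the same splitting as above. The piece over $(t,T)$ gives $(T-t)^{1/2}$. For the remaining piece I use $\|\partial_\tau\nabla G(\cdot,\tau)\|_{L^1}\lesssim \tau^{-3/2}$:
\[
\int_0^t\min\Big\{(t-s)^{-1/2},\ (T-t)(t-s)^{-3/2}\Big\}ds\lesssim (T-t)^{1/2},
\]
with the split made at $t-s=T-t$.

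\textbf{H\"older in time.} For $t_1<t_2$ with $t_2-t_1\le\frac1{10}(T-t_2)$, the same decomposition (with $t_2,t_1$ in place of $T,t$) gives
\[
|\psi(t_2)-\psi(t_1)|\lesssim (t_2-t_1)+\int_0^{t_1}\min\Big\{1,\frac{t_2-t_1}{t_1-s}\Big\}ds\lesssim (t_2-t_1)\big(1+|\log(t_2-t_1)|\big).
\]

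\textbf{Main obstacle.} The main point of care is the bounded-domain case. There we need the Gaussian bounds for $\partial_\tau$ and $\nabla$ of the Dirichlet kernel up to the boundary. For times $\tau\le T\ll1$ these are standard short-time estimates. Alternatively, one can localize with a cutoff away from $\partial\Omega$ and treat the commutator terms, which are bounded, by the maximum principle. The symmetry class plays no role, since $|f|\lesssim1$ is isotropic.
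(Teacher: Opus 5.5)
Your proposal is correct and is essentially the argument the paper has in mind. The paper omits the proof as ``similar'' to that of Lemma \ref{lemma-rhs1}, which likewise uses Duhamel's formula and splits the time integral at $t-s\sim T-t$ (resp.\ $t_1-s\sim t_2-t_1$). It applies the mean value theorem in time on the far past and bounds the two kernels separately on the recent past. The only cosmetic difference is the bounded-domain case: you invoke Gaussian bounds for the Dirichlet kernel, whereas the paper subtracts the whole-space solution and treats the homogeneous remainder by standard parabolic estimates.

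Your reading of the second estimate as a bound on $\psi(x,t)-\psi(x,T)$ is the right one. Taken literally, that estimate fails already for $f\equiv1$ on $\R^n$, where $\psi=t$.
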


\medskip

\begin{proof}[Proof of Proposition \ref{outer-apriori}]
We denote $\psi[\mathbf {f}_{\rm out}]$ by the solution to problem \eqref{outer-linear''} with the right hand side $\mathbf {f}_{\rm out}$ satisfying $\|\mathbf {f}_{\rm out}\|_{**}<+\infty$. Decompose
$$\mathbf {f}_{\rm out}=\sum\limits_{i=1}^3 f_i~\mbox{ with }~|f_i|\lesssim \varrho_i\|f_i\|_{**}.$$
Let $1-\frac{\mu}{2}=\gamma$ in Lemma \ref{lemma-rhs1}. Then by the linearity, \eqref{est-outer-apriori} follows from Lemma \ref{lemma-rhs1}, Lemma \ref{lemma-rhs2} and Lemma \ref{lemma-rhs3}.
\end{proof}

The proofs of above Lemmata are postponed to Appendix \ref{Appendix-outer}.

\medskip

\begin{remark}\label{remark6.1}
Let us point out the reason why we use the $\|\cdot\|_{*}$-norm of $\psi$ \eqref{def-norm*} only involving $\nu$ but not $\nu_2$ appearing in Lemma \ref{lemma-rhs2}. Lemma \ref{lemma-rhs2} is needed to deal with the right hand side of outer problem with cut-off $1-\eta_R$ in front. For convenience, when we carry out the inner--outer gluing procedure to bound right hand sides in the chosen topology, we will adjust $\nu_2$ such that the control of $\psi$ in Lemma \ref{lemma-rhs2} is better than that of Lemma \ref{lemma-rhs1}. This will result in a constraint for the parameters
$
\nu_2>\nu-1+\frac{\alpha}{2}.
$
In fact, the above constraint will be satisfied by the choices of parameters in Section \ref{subsec-choices}.
\end{remark}

\medskip


\section{Linear theory for the inner problem}\label{sec-linearinner}

\medskip

In this section, we develop a linear theory concerning the estimates for the associated linear problem of the inner problem under certain topology.

In order to solve the inner problem \eqref{inner}, we consider the associated linear problem
\begin{equation}\label{eqn-li}
\la^{2} \phi_t=\Delta_y \phi+3U^{2}(y) \phi+h(y,t)~\mbox{ in }~\mathcal D_{2R}\times (0,T).
\end{equation}
Recall that the linearized operator $L_0=\Delta +3U^{2}$
has only one positive eigenvalue $\mu_0$ such that
$$L_0(Z_0)=\mu_0 Z_0,~Z_0\in L^{\infty}(\R^4),$$
where the corresponding eigenfunction $Z_0$ is radially symmetric with the asymptotic behavior
\begin{equation}\label{decay-Z_0}
Z_0(y)\sim |y|^{-3/2}e^{-\sqrt{\mu_0}|y|}~\mbox{ as }~|y|\to+\infty.
\end{equation}
Multiplying equation \eqref{eqn-li} by $Z_0$ and integrating over $\R^4$, we obtain that
$$\la^2(t)\dot p(t)-\mu_0 p(t)=q(t),$$
where
$$p(t)=\int_{\R^4} \phi(y,t)Z_0(y) dy~\mbox{ and }~q(t)=\int_{\R^4} h(y,t)Z_0(y) dy.$$
Then we have
$$p(t)=e^{\int_0^t \mu_0\la^{-2}(r) dr}\left(p(0)+\int_0^t q(\eta)\la^{-2}(\eta)e^{-\int_0^{\eta} \mu_0\la^{-2}(r)dr}d\eta\right).$$
In order to get a decaying solution, the initial condition
$$p(0)=-\int_0^T q(\eta)\la^{-2}(\eta)e^{-\int_0^{\eta} \mu_0\la^{-2}(r)dr}d\eta$$
is required. The above formal argument suggests that a linear constraint should be imposed on the initial value $\phi(y,0)$. Therefore, we consider the associated linear Cauchy problem of the inner problem \eqref{inner}
\begin{equation}\label{linear-inner'''''}
\begin{cases}
  \la^2\phi_{t}=\Delta_y\phi + 3U^{2}(y)\phi + h(y,t), & \mbox{ in } \mathcal D_{2R}\times (0,T) \\
  \phi(y,0)=e_0 Z_0(y), & \mbox{ in } \mathcal D_{2R(0)}
\end{cases}
\end{equation}
where $R=R(t)=(T-t)^{-1/2}|\log(T-t)|^{\theta}$ with $\theta\in \left(\frac{n-1}{n-2},\frac{n}{n-2}\right)$. Note that with the above choice of $R(t)$, the inner problem is inside the self-similar region since $$\la_*(t)R(t)\sim\sqrt{T-t}|\log(T-t)|^{\theta-\frac{n}{n-2}}\ll\sqrt{T-t}$$ as $t\nearrow T$. On the other hand, the parabolic operator $-\la^2\partial_t+L_0$ is certainly not invertible since all the time independent elements in the 5 dimensional kernel of $L_0$ (see \eqref{kernels}) also belong to the kernel of $-\la^2\partial_t+L_0$. In order to construct solution to \eqref{linear-inner'''''} with suitable space-time decay, some orthogonality conditions are expected to hold. So we consider the projected problem
\begin{equation}\label{linear-inner}
\begin{cases}
  \la^2\phi_{t}=\Delta_y\phi + 3U^{2}(y)\phi + h(y,t)+\tilde c^0(t)Z_5(y)+\sum\limits_{\ell=1}^4 c^{\ell}(t) Z_{\ell}(y), & \mbox{ in } \mathcal D_{2R}\times (0,T), \\
  \phi(y,0)=e_0 Z_0(y), & \mbox{ in } \mathcal D_{2R(0)}.
\end{cases}
\end{equation}

Our aim is to find suitable solution to problem \eqref{linear-inner} with space-time decay of the following type
\begin{equation}\label{def-normnua}
\|\phi\|_{\rm{in},\nu,a}:=\sup_{\substack{y\in \mathcal D_{2R} \\ t\in(0,T)}} \la_*^{-\nu}(t)(1+|y|^a)\left[|\phi(y,t)|+(1+|y|)|\nabla \phi(y,t)|\right],
\end{equation}
and the norm of the right hand side of problem \eqref{linear-inner} is given by
$$\|h\|_{\nu,a}:=\sup_{\substack{y\in \mathcal D_{2R} \\ t\in(0,T)}} \la_*^{-\nu}(t)(1+|y|^a)|h(y,t)|.
$$ 
The construction of such solution is carried out by decomposing the equation into different spherical harmonic modes. Let an orthonormal basis $\{\Theta_i\}_{i=0}^{\infty}$ be made up of spherical harmonics in $L^2(\mathbb{S}^{3})$, i.e.
$$\Delta_{\mathbb{S}^{3}}\Theta_i+\tilde\mu_i \Theta_i=0 ~\mbox{ in } ~\mathbb{S}^{3}$$
with
$$0=\tilde\mu_0<\tilde\mu_1=\cdots=\tilde\mu_4=3<\tilde\mu_5\leq\cdots.$$
More precisely, $\Theta_0(y)=c_0,~\Theta_i(y)=c_1y_i,~i=1,\cdots,4$ for two constants $c_0$, $c_1$ and $\tilde\mu_i$ takes the general form $i(2+i)$ with multiplicity $\frac{(3+i)!}{6i!}$ for $i\geq 0$.

For $h\in L^2(\mathcal D_{2R})$, we decompose it into
\begin{equation*}
h(y,t)=\sum\limits_{j=0}^{\infty} h_j(r,t)\Theta_j(y/r),\quad r=|y|,\quad h_j(r,t)=\int_{\mathbb{S}^{3}} h(r\theta,t)\Theta_j(\theta)d\theta
\end{equation*}
and write $h=h^0+h^1+h^{\perp}$ with
$$h^0=h_0(r,t),\quad h^1=\sum\limits_{j=1}^4 h_j(r,t)\Theta_j,\quad h^{\perp}=\sum\limits_{j=5}^{\infty} h_j(r,t)\Theta_j.$$
Also, we decompose $\phi=\phi^0+\phi^1+\phi^{\perp}$ in a similar form. Then finding a solution to problem \eqref{linear-inner} is equivalent to finding the pairs $(\phi^0,h^0),~(\phi^1,h^1),~(\phi^{\perp},h^{\perp})$ in each mode.

The main result of this section is stated as follows.
\begin{prop}\label{lineartheory}
Let constants $a,\nu,\nu_1,\sigma\in(0,1)$ and $a_1\in(1,2)$. For $T>0$ sufficiently small and any $h(y,t)$ satisfying $\|h^0\|_{\nu,2+a}<+\infty$, $\|h^1\|_{\nu_1,2+a_1}<+\infty$, $\|h^{\perp}\|_{\nu,2+a}<+\infty$, there exists a solution $(\phi,\tilde c^0,c^{\ell},e_0)$ solving \eqref{linear-inner} and   $(\phi,\tilde c^0,c^{\ell},e_0)=(\phi[h],\tilde c^0[h^0],c^{\ell}[h^1],e_0[h])$ defines a linear operator of $h$ that satisfies the estimates:
$$ \begin{aligned}
|\phi(y,t)|+(1+|y|)|\nabla \phi(y,t)|\lesssim&~ \Bigg[\frac{\la_*^{\nu}(t)R^{\sigma(4-a)}\log R}{1+|y|^4}\|h^0\|_{\nu,2+a}+\frac{\la_*^{\nu_1}(t)}{1+|y|^{a_1}}\|h^1\|_{\nu_1,2+a_1}\\
&~+\frac{\la_*^{\nu}(t)}{1+|y|^a}\|h^{\perp}\|_{\nu,2+a}\Bigg]
\end{aligned}
$$ 
for $|y|\leq 2R^{\sigma}$, and
$$ 
\begin{aligned}
|\phi(y,t)|+(1+|y|)|\nabla \phi(y,t)|\lesssim&~ \Bigg[\frac{\la_*^{\nu}(t)\log R}{1+|y|^a}\|h^0\|_{\nu,2+a}+\frac{\la_*^{\nu_1}(t)}{1+|y|^{a_1}}\|h^1\|_{\nu_1,2+a_1}+\frac{\la_*^{\nu}(t)}{1+|y|^a}\|h^{\perp}\|_{\nu,2+a}\Bigg]
\end{aligned}
$$ 
for $2R^{\sigma}\leq |y| \leq 2R$.
$$ 
\tilde c^0(t)=-\frac{\int_{\mathcal D_{2R_*}}h^0 Z_5 dy}{\int_{\mathcal D_{2R_*}}|Z_5|^2 dy}-\mathcal O[h^0],~~c^{\ell}(t)=-\frac{\int_{\mathcal D_{2R}}h^1 Z_{\ell} dy}{\int_{\mathcal D_{2R}}|Z_{\ell}|^2 dy}~\mbox{ for }~\ell=1,\cdots,4,
$$ 
where $R_*=R^{\sigma}$ and $\mathcal O[h^0]$ is a linear operator of $h^0$ satisfying
$$|\mathcal O[h^0]|\lesssim \la_*^{\nu} R_*^{a'-a}\log R\|h^0\|_{\nu,2+a}$$
for $a'\in(0,a)$. Moreover,
\begin{equation*}
|e_0[h]|\lesssim \la_*^{\nu}\left(\|h^0\|_{\nu,2+a}+\|h^1\|_{\nu_1,2+a_1}+\|h^{\perp}\|_{\nu,2+a}\right).
\end{equation*}
\end{prop}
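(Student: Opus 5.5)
The proof proceeds mode by mode, using the decomposition $h=h^0+h^1+h^\perp$ and $\phi=\phi^0+\phi^1+\phi^\perp$. By linearity, each pair $(\phi^j,h^j)$ can be solved separately and the estimates summed. The unstable direction $Z_0$ is radial, so it lives in mode $0$ (and in the $\perp$ part of the error only through projection). We first fix it with the parameter $e_0$, following the formal computation above, which isolates $p(t)=\int\phi Z_0$. Concretely, we solve each projected problem on a slightly larger ball with zero boundary data, and then choose $e_0=e_0[h]$ by a one-dimensional fixed point/continuity argument so that the $Z_0$-component stays bounded. The weight $e^{-\int\mu_0\la^{-2}}$ gives $|e_0|\lesssim\la_*^\nu(0)\|h\|$ directly.

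\emph{Higher modes $\phi^\perp$.} The operator $L_0$ restricted to modes $j\ge5$ has no kernel and satisfies a coercivity (Hardy-type) inequality. We argue by a priori estimate plus blow-up: suppose the bound $\la_*^\nu(1+|y|)^{-a}\|h^\perp\|_{\nu,2+a}$ fails along sequences $(y_k,t_k)$. Rescaling in time by $\la^2$ (the variable $\tau=\int\la^{-2}$) and using parabolic regularity, we pass to a limit. This yields either a nontrivial bounded ancient solution of $\partial_\tau\phi=L_0\phi$ in mode $\ge5$ on $\R^4$, or one of the heat equation on a half-space or whole space with decay $|y|^{-a}$. Liouville theorems exclude both. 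The barrier $|y|^{-a}$ works because $a<2$, so $\Delta|y|^{-a}\lesssim -|y|^{-a-2}$ in dimension $4$.

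\emph{Translation mode $\phi^1$.} Here $c^\ell$ is chosen as stated, which makes $h^1+c^\ell Z_\ell$ orthogonal to $Z_\ell$ on $\mathcal D_{2R}$. Writing $\phi^1=\varphi(\rho,t)\Theta_\ell$, we solve the ODE-type equation $L_0$ in that mode explicitly through the kernel element $Z_\ell\sim\rho^{-3}$ and variation of parameters. For the stationary elliptic problem, orthogonality yields decay $\rho^{-a_1}$ from a right-hand side decaying like $\rho^{-2-a_1}$; the restriction $a_1<2$ avoids the next homogeneous growth. The time derivative is absorbed by the same blow-up argument as in the previous step.

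\emph{Scaling mode $\phi^0$: the main obstacle.} Here $Z_5\sim|y|^{-2}$ is only borderline in $L^2(\R^4)$, since $\int_{B_R}Z_5^2\sim\log R$, so projecting over the whole of $\mathcal D_{2R}$ loses too much. This is why the statement involves $R_*=R^\sigma$ and the factor $\log R$. The plan is the re-gluing mentioned in the introduction. We write $\phi^0=\eta_{R_*}\phi_i+\phi_o$, where $\phi_o$ solves a heat-type equation $\la^2\partial_t\phi_o=\Delta\phi_o+(1-\eta_{R_*})(h^0+3U^2\phi_o)+\dots$ on $\mathcal D_{2R}$. Since $3U^2\lesssim|y|^{-4}$ is small for $|y|\ge R_*$, we expect $\phi_o$ to be bounded by $\la_*^\nu\log R(1+|y|)^{-a}$ using the barrier $|y|^{-a}$ together with a logarithmic correction at the boundary $2R$. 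The inner piece $\phi_i$ then solves the projected problem on $B_{2R_*}$ with right-hand side $h^0\eta_{R_*}+3U^2\phi_o$. Its orthogonality condition against $Z_5$ on $\mathcal D_{2R_*}$ determines $\tilde c^0$. The term $\int 3U^2\phi_o Z_5$ defines the operator $\mathcal O[h^0]$, and bounding $\phi_o$ by $|y|^{-a'}$ on its support gives the factor $R_*^{a'-a}\log R$. Solving the inner elliptic problem with orthogonality, with the right-hand side supported in $|y|\le 2R_*$, gives decay $|y|^{-4}$ but with constant $R_*^{4-a}$, which matches the first estimate. The delicate points are closing this small coupled fixed point (the smallness comes from $R_*^{-\epsilon}$, taking $T$ small) and controlling the time derivative in $\phi_i$ by blow-up arguments. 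In those arguments, a limiting ancient solution in mode $0$ orthogonal to $Z_5$, with $Z_0$ removed via $e_0$, must vanish by nondegeneracy of $L_0$.
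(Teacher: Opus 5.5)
Your plan is correct and is essentially the paper's proof. The shared ingredients are:
\begin{enumerate}
\item the mode-by-mode construction;
\item the re-gluing $\phi^0=\phi^0_{out}+\eta_{R_*}\phi^0_{in}$ with $R_*=R^\sigma$, where $\tilde c^0$ comes from the inner orthogonality and $\mathcal O[h^0]$ from $\int 3U^2\phi^0_{out}Z_5$, with the $R_*^{a'-a}\log R$ gain because all sources of $\phi^0_{out}$ lie in $|y|\ge R_*$;
\item the $Z_0$-direction removed through the bounded solution of $\lambda^2\dot e-\mu_0e=c$;
\item a blow-up/Liouville argument for modes $1$ to $4$.
\end{enumerate}

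The only difference is that the paper does not re-derive the inner mode-$0$ model estimate (Proposition \ref{modelpropmode0}) or the higher-mode bound; it imports them from \cite{Green16JEMS}, whereas you propose to obtain them by blow-up. If you do this in mode $0$, note that $Z_5\notin L^2(\mathbb R^4)$, so orthogonality to $Z_5$ on growing balls only passes to the blow-up limit under the stronger weight $R_*^{4-a}(1+|y|)^{-4}$, not under $|y|^{-a}$.
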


We devote the rest of this section to proving Proposition \ref{lineartheory}. Our strategy is to construct $\phi=\phi^0+\phi^1+\phi^{\perp}$ mode by mode.

\medskip

\noindent {\bf 1. Construction at mode $0$.}

\medskip

We construct $\phi^0$ solving the linearized problem at mode $0$
\begin{equation}\label{eqnmode0'''}
\begin{cases}
  \la^2\phi^0_{t}=\Delta_y\phi^0 + 3U^{2}(y)\phi^0 + h^0(y,t)+\tilde c^0(t)Z_5(y), & \mbox{ in } \mathcal D_{2R}\times (0,T), \\
  \phi^0(y,0)=e_0 Z_0(y), & \mbox{ in } \mathcal D_{2R(0)}.
\end{cases}
\end{equation}

The main result for mode $0$ is the following

\begin{prop}\label{propmode0}
Let $\nu,a,\sigma\in(0,1)$. Suppose $\|h^0\|_{\nu,2+a}<+\infty$. Then there exists a solution $(\phi^0,\tilde c^0,e_0)$ to problem \eqref{eqnmode0'''}, which depends on $h^0$ linearly such that
\begin{equation*}
|\phi^0(y,t)|+(1+|y|)|\nabla \phi^0(y,t)|\lesssim \la_*^{\nu}\log R\|h^0\|_{\nu,2+a}\begin{cases}
\frac{R^{\sigma(4-a)}}{1+|y|^4},~&\mbox{ for }~|y|\leq 2R^{\sigma},\\
\frac{1}{1+|y|^a},~&\mbox{ for }~2R^{\sigma}\leq|y|\leq 2R,\\
\end{cases}
\end{equation*}
\begin{equation*}
\tilde c^0[h^0](t)=-\frac{\int_{\mathcal D_{2R_*}}h^0 Z_5 dy}{\int_{\mathcal D_{2R_*}}|Z_5|^2 dy}-\mathcal O[h^0],
\end{equation*}
where $\mathcal O[h^0]$ is a linear operator of $h^0$ satisfying
$$|\mathcal O[h^0]|\lesssim \la_*^{\nu} R_*^{a'-a}\log R\|h^0\|_{\nu,2+a}$$
for $a'\in(0,a).$ Futhermore, it holds that
$$|e_0[h^0]|\lesssim \la_*^{\nu}\|h^0\|_{\nu,2+a}.$$
\end{prop}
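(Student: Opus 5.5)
We construct $\phi^0$ by the linear inner--outer ``re-gluing'' announced in the introduction, following the scheme of \cite{Green16JEMS,17HMF}. Set $R_*=R^{\sigma}$ and write
\[
\phi^0=\eta_{R_*}(y)\,\phi_{\rm in}(y,t)+\phi_{\rm out}(y,t),
\]
where $\phi_{\rm in}$ solves a mode-$0$ problem in $B_{2R_*}$ with right-hand side $\eta_{R_*}$-localized data plus the projection term $\tilde c^0 Z_5$ and the unstable direction $e_0Z_0$. The remainder $\phi_{\rm out}$ solves, in $\mathcal D_{2R}$, the linearized equation with the potential $3U^2$ cut off (replaced by $3(1-\eta_{R_*})U^2$, which is of size $|y|^{-4}\ll |y|^{-2}$ far out). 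It carries $(1-\eta_{R_*})h^0$ together with the commutator terms $\Delta\eta_{R_*}\phi_{\rm in}$, $\nabla\eta_{R_*}\cdot\nabla\phi_{\rm in}$ and $\la^2\phi_{\rm in}\partial_t\eta_{R_*}$.

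\textbf{The outer part $\phi_{\rm out}$.} Since $|y|^{-4}\ll|y|^{-2}$ for $|y|\ge R_*$, this is a perturbation of the radial heat operator $\la^2\partial_t-\Delta_y$. I would bound it by comparison with the barrier $\la_*^\nu(1+|y|)^{-a}$. For $a\in(0,1)$ in dimension $4$ one has $-\Delta|y|^{-a}=a(2-a)|y|^{-a-2}$, which dominates $|y|^{-2-a}\|h^0\|_{\nu,2+a}$. The time derivative $\la^2\partial_t\la_*^\nu$ is harmless, because $\la\dot\la\,|y|^{2}\to0$ for $|y|\le 2R$; this is exactly where $\la_*R\ll\sqrt{T-t}$ is used. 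This gives $|\phi_{\rm out}|\lesssim\la_*^\nu(1+|y|)^{-a}$, and gradient bounds follow from parabolic scaling. The commutator terms are supported in $R_*\le|y|\le2R_*$ and are controlled by the inner bound, producing a loss coupling the two pieces. I would close this coupling by a contraction, using the factor $R_*^{-\epsilon}$ gained from the cutoff.

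\textbf{The inner part $\phi_{\rm in}$.} I would solve
\[
\la^2\partial_t\phi_{\rm in}=L_0\phi_{\rm in}+h_{\rm in}+\tilde c^0Z_5
\]
in $B_{2R_*}$. First, $\tilde c^0$ is chosen so that $\int_{B_{2R_*}}(h_{\rm in}+\tilde c^0Z_5)Z_5\,dy=0$; this gives the stated formula for $\tilde c^0$. The correction $\mathcal O[h^0]$ comes from the part of $h_{\rm in}$ generated by $3U^2\phi_{\rm out}$, whose projection is $O(\la_*^\nu R_*^{a'-a}\log R)$ because $\phi_{\rm out}$ is small where $U^2$ lives. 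Second, $e_0$ is fixed by the ODE argument given before \eqref{linear-inner'''''} to kill the growth along $Z_0$; that argument yields $|e_0|\lesssim\la_*^\nu\|h^0\|$. For the estimate itself, I would use the blow-up (compactness) argument. Suppose the bound
\[
|\phi_{\rm in}|\lesssim \la_*^\nu R_*^{4-a}(1+|y|)^{-4}\|h\|
\]
fails along a sequence. After rescaling, a limit is a bounded ancient solution of $\partial_\tau\phi=L_0\phi$ in $\R^4$, radial, orthogonal to $Z_5$ and to $Z_0$, with $|y|^{-4}$-type decay. Such a solution must vanish, by the spectral gap of $L_0$ on the orthogonal complement of $\{Z_0,Z_5\}$, which gives the contradiction. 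The factor $R_*^{4-a}$ appears because the data decay only like $|y|^{-2-a}$: the orthogonality makes the resolved solution decay like $|y|^{-4}$ only after paying the mass $\int_{B_{R_*}}|y|^{-2-a}\,|y|^{3}\,dy\sim R_*^{2-a}$, times a further $R_*^2$ from the heat time scale.

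\textbf{The main obstacle} is the logarithm. In dimension $4$, $Z_5\sim|y|^{-2}$ is not in $L^2$, so $\int_{B_{R_*}}|Z_5|^2\sim\log R_*$. Moreover, the Dirichlet-type matching at $|y|=R_*$ loses a factor $\log R$ rather than a power. I would handle this by the explicit radial variation of parameters formula for $L_0$ at mode $0$, with fundamental system $Z_5$ and $\tilde Z_5\sim \log|y|$ near infinity, used to estimate the stationary part in the annulus $R_*\le |y|\le 2R$. This is the source of the $\log R$ in both regimes of the estimate. Combining the inner and outer bounds then yields the two-regime estimate of Proposition \ref{propmode0}.
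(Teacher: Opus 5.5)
Your architecture is the paper's. Both use the splitting $\phi^0=\phi^0_{out}+\eta_{R_*}\phi^0_{in}$ with $R_*=R^{\sigma}$, and treat the outer problem with the truncated potential $3(1-\eta_{R_*})U^2$ by comparison. Both close the coupling by a contraction; in the paper the small factor is $R_*^{a'-a}\log R$, obtained by measuring $\phi^0_{out}$ in the weaker norm $\|\cdot\|_{\nu,a'}$, $a'<a$, and reabsorbing it through $U^2$. Both take $\tilde c^0$ with $\mathcal O[h^0]$ equal to the $Z_5$-projection of $3U^2\phi^0_{out}$, and obtain $e_0$ from the $Z_0$-ODE (via $-c(t)Z_0$, \eqref{choiceeee} and \eqref{est-ccc}).

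Where you depart from the paper, and where the gap is, is the inner estimate $\|\phi^0_{in}\|_{0,*}\lesssim\|h^0+3U^2\phi^0_{out}\|_{\nu,2+a}$. The paper does not prove it: it applies Proposition~\ref{modelpropmode0} (i.e.\ \cite{Green16JEMS,ni4d}) on $\mathcal D_{2R_*}$. Your compactness argument does not establish it. It only treats concentration at bounded $|y|$, whereas the substance of the weight $R_*^{4-a}(1+|y|)^{-4}$ is decay faster than $|y|^{-2}$ for $|y|\gg1$.

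If the bound fails at points with $|y_k|\to\infty$, the rescaled limits are ancient heat solutions on a punctured domain. They may carry a source at the origin, namely the far-field trace $c(\tau)|z|^{-2}$ of a $Z_5$-component in the core. The pairing with $Z_5$ is log-divergent there, so the orthogonality does not pass to the limit. Such limits are compatible with a $|z|^{-4}$ bound: $\int_0^\infty\cos(\tau-s)(4\pi s)^{-2}e^{-|z|^2/4s}\,ds$ is radial, ancient, nonzero, $O(|z|^{-2})$ at $0$ and exponentially decaying at infinity. So no Liouville theorem closes the argument. Compare Case~2 of Lemma~\ref{lemmamode1}, where the barrier $(\rho^2+c\tau)^{-a_1/2}+\epsilon\rho^{-2}$ works only because $a_1<2$.

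The $|y|^{-4}$ profile has to be produced structurally from the $Z_5$-orthogonality, by the standard mode-$0$ device of \cite{Green16JEMS,17HMF}. One inverts $L_0$ on the projected data by variation of parameters, solves the parabolic problem for a potential $\Phi$ with that source, and sets $\phi=L_0[\Phi]$. The two extra powers of decay then come from parabolic scaling, and the factor $R_*^{4-a}\log R$ from the size of $\Phi$. Your heuristic count (mass $R_*^{2-a}$ times $R_*^{2}$) is consistent with this, but it is not a proof.

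There are also two smaller inaccuracies.
\begin{itemize}
\item There is no spectral gap: in $\R^4$ the essential spectrum of $L_0$ is $(-\infty,0]$, and $Z_5\notin L^2$ is a resonance. Your core limit does vanish, but through the energy-monotonicity argument of Lemma~\ref{lemmamode1}. That argument gives $\tilde\phi_\tau=0$, after which a bounded radial kernel element $cZ_5\sim|y|^{-2}$ is incompatible with $|y|^{-4}$ decay.
\item The second radial solution of $L_0\phi=0$ tends to a nonzero constant at infinity (with a $|y|^{-2}\log|y|$ correction), not to $\log|y|$. Moreover, in your own splitting the annulus $R_*\le|y|\le 2R$ is governed by $\Delta+3(1-\eta_{R_*})U^2$, not by $L_0$. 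In the paper the $\log R$ there is simply inherited from the inner bound through $\mathtt C[\phi^0_{in}]$, giving $\|\phi^0_{out}\|_{\nu,a}\lesssim\log R\,\|h^0\|_{\nu,2+a}$.
\end{itemize}

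If you replace your compactness step by a direct appeal to Proposition~\ref{modelpropmode0} on $\mathcal D_{2R_*}$ with data $h^0+3U^2\phi^0_{out}$, the rest of your argument becomes the paper's proof.
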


\begin{remark}
If we define
\begin{equation}\label{def-normm0}
\|\phi^0\|_{0,\sigma,\nu,a}:=\sup_{(y,t)\in\mathcal D_{2R}\times(0,T)} \frac{1+|y|^4}{\la_*^{\nu}(t)R^{\sigma(4-a)}(t)\log R}\left[|\phi^0(y,t)|+(1+|y|)|\nabla \phi^0(y,t)|\right],
\end{equation}
then Proposition \ref{propmode0} implies that
\begin{equation*}
\|\phi^0\|_{0,\sigma,\nu,a}\lesssim \|h^0\|_{\nu,2+a}.
\end{equation*}
\end{remark}

\medskip

The strategy to prove Proposition \ref{propmode0} is a new inner--outer gluing scheme. We shall decompose $\phi^0$ into inner and outer profiles to get more refined estimates. Before we prove Proposition \ref{propmode0}, we first state a result for the following problem
\begin{equation}\label{modelpropmode0''}
\begin{cases}
\la^2\phi_t=\Delta_y \phi+3U^2(y)\phi+h(y,t)+\tilde c^0(t)Z_5-c(t)Z_0,~&\mbox{ in } \mathcal D_{2R}\times (0,T),\\
\phi(y,0)=0,~&\mbox{ in } \mathcal D_{2R(0)}.
\end{cases}
\end{equation}
\begin{prop}\label{modelpropmode0}
Let $\nu,a\in(0,1)$. Then for sufficiently large $R$ and any $h$ satisfying $\|h\|_{\nu,2+a}<+\infty$, there exists a solution $(\phi,\tilde c^0,c)$ to \eqref{modelpropmode0''} which is linear in $h$ such that
\begin{equation}\label{mode0est444}
|\phi(y,t)|+(1+|y|)|\nabla \phi(y,t)|\lesssim \la_*^{\nu}\frac{R^{4-a}\log R}{1+|y|^4}\|h\|_{\nu,2+a},
\end{equation}
\begin{equation*}
\tilde c^0(t)=-\frac{\int_{\mathcal D_{2R}}h Z_5 dy}{\int_{\mathcal D_{2R}}|Z_5|^2 dy},
\end{equation*}
and
\begin{equation}\label{est-ccc}
\left|c(t)-\int_{\mathcal D_{2R}} h Z_0\right| \lesssim \la_*^{\nu}(t)\left[\left\|h-Z_0\int_{\mathcal D_{2R}}h Z_0\right\|_{\nu,2+a}+e^{-\vartheta R}\|h\|_{\nu,2+a}\right].
\end{equation}
\end{prop}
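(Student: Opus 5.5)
The plan is to follow the mode-$0$ scheme of \cite{Green16JEMS,17HMF}. We build $\phi$ from a time-independent elliptic solution and then handle the parabolic correction by a blow-up/contradiction argument. The projection coefficients are fixed so that the right-hand side is orthogonal to $Z_5$ and $Z_0$ at each time; the radial symmetry of mode $0$ removes translations.

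\textbf{Step 1: elliptic solution.} Set
\[
\tilde c^0=-\frac{\int_{\mathcal D_{2R}}hZ_5}{\int_{\mathcal D_{2R}}|Z_5|^2},
\qquad
\tilde h:=h+\tilde c^0 Z_5 .
\]
Then $\tilde h$ is orthogonal to $Z_5$ on $\mathcal D_{2R}$, and $|\tilde h|\lesssim \la_*^\nu\|h\|_{\nu,2+a}(1+|y|)^{-2-a}$ there, since $|Z_5|\sim |y|^{-2}$ and $\int_{B_{2R}}|Z_5|^2\sim\log R$. For each $t$, solve $L_0\Phi=-\tilde h$ radially by variation of parameters. The two homogeneous solutions are $Z_5$ and a second solution $\tilde Z\sim|y|^{-2}\log$-type near infinity. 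The orthogonality kills the growing piece up to $|y|=2R$, and in dimension $4$ one obtains $|\Phi|\lesssim \la_*^\nu\|h\|\,R^{4-a}\log R\,(1+|y|)^{-4}$, the worst case being the $Z_5$-component integrated up to $R$. This elliptic bound is what produces the weight in \eqref{mode0est444}.

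\textbf{Step 2: parabolic problem.} Write $\phi=\Phi+\varphi$; in practice we solve directly on expanding domains. First solve \eqref{modelpropmode0''} on $\mathcal D_{2R}\times(s_0,T)$ with zero data and Dirichlet condition on $|y|=2R$. Choose
\[
c(t)=\int_{\mathcal D_{2R}}(h+\tilde c^0Z_5+\text{correction})Z_0 ,
\]
so that the unstable direction is projected out; the $Z_0$-component otherwise obeys $\la^2\dot p-\mu_0 p=q$ and blows up. Since $Z_0\perp Z_5$ and $Z_0$ decays exponentially, this gives the estimate \eqref{est-ccc}, the $e^{-\vartheta R}$ coming from truncating $Z_0$ at $2R$.

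The key a priori bound is
\[
\|\phi\|_{*,R}:=\sup \la_*^{-\nu}\frac{1+|y|^4}{R^{4-a}\log R}|\phi|\lesssim\|h\|_{\nu,2+a},
\]
proved by contradiction. Suppose $\|\phi_k\|=1$ while $\|h_k\|\to0$. If the supremum is attained at bounded $|y|$, rescaling time by $\la^2$ gives a limit that is an ancient solution of $\partial_\tau\phi=L_0\phi$ in $\R^4$ with decay $|y|^{-4}$. It is orthogonal to $Z_0$ and $Z_5$; the radial kernel is spanned by $Z_5$, the only unstable direction is $Z_0$, and the decay permits the energy argument. Hence the limit vanishes, a contradiction. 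If instead the supremum sits at $|y_k|\to\infty$, use barriers $R^{4-a}\log R\,|y|^{-4}$ and $|y|^{-2-a}$-type supersolutions of $-\la^2\partial_t+\Delta$ on $|y|\ge M$ to contradict the normalization.

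\textbf{Step 3: existence and remaining bounds.} Existence follows from solving on finite intervals, using the uniform bound, and passing to the limit $s_0\to0$. The gradient estimate comes from local parabolic scaling in balls of radius $\sim|y|/2$.

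The main obstacle is Step 2, namely the sharp weight $R^{4-a}\log R$. Its $\log R$ and $R^{4-a}$ factors arise from the nonintegrability of $Z_5^2$ in $\R^4$, so the blow-up argument must be run with $R$-dependent weights and the ancient-solution Liouville theorem must tolerate the slowly decaying $Z_5$ direction. To handle this, we would first project $\phi$ onto $Z_5$ on $\mathcal D_{2R}$, subtract that projection, and control it separately via the ODE obtained by testing the equation against $Z_5$, which is exactly how the $\log R$ enters.
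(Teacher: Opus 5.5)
The paper gives no argument of its own here; it only cites \cite[Section 7]{Green16JEMS} and \cite[Section 5.2]{ni4d}. Your route departs from the mechanism used there, and it breaks at the outer case of the blow-up argument in Step 2.

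\textbf{The outer case does not close.} In $\R^4$ one has $\Delta|y|^{-4}=8|y|^{-6}>0$. So $\la_*^\nu R^{4-a}\log R\,|y|^{-4}$ is a subsolution, not a supersolution, of $-\la^2\partial_t+\Delta$ on $\{M\le|y|\le 2R\}$ for $t$ near $T$; the slow growth of $\la_*^\nu R^{4-a}\log R$ in $t$ does not compensate. More fundamentally, comparison in dimension $4$ only transports the Newtonian decay $|y|^{-2}$. From $|\phi|\le o(1)\la_*^\nu R^{4-a}\log R$ on $\{|y|\le M\}$ you get at best $o(1)\la_*^\nu R^{4-a}\log R\,M^2|y|^{-2}$ beyond $M$. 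That is not $o(1)$ times your weight once $|y|$ is large, so a maximizer with $|y_k|\to\infty$ is not excluded. A Liouville argument does not rescue this: after rescaling by $|y_k|$ the core acts as a point source, and there are nonzero radial ancient solutions of the heat equation on $\R^4\setminus\{0\}$ bounded by $C|z|^{-4}$ (Duhamel integrals of the heat kernel against a source at $0$). The $|y|^{-4}$ profile relative to a core value $R^{4-a}\log R$ is structural, not a maximum-principle fact.

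\textbf{What the cited mode-$0$ argument does instead.} As far as I can reconstruct it (the paper shows no detail), it runs as follows.
\begin{enumerate}
\item Use $\int_{\mathcal D_{2R}}(h+\tilde c^0Z_5)Z_5=0$ to write $h+\tilde c^0Z_5=L_0[H]$ with $H$ supported in $\mathcal D_{2R}$. Integrate the radial ODE by variation of parameters inward from $|y|=2R$; orthogonality to $Z_5$ is exactly what makes this solution regular at $y=0$.
\item Solve $\la^2\Phi_t=L_0\Phi+H-\tilde c(t)Z_0$ in a weight decaying like $|y|^{-2}$. This is borderline in $\R^4$, which is where the $\log R$ comes from.
\item Set $\phi=L_0[\Phi]$. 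This solves the problem with $c=\mu_0\tilde c$, gains $|y|^{-2}$ by parabolic estimates, and annihilates the uncontrolled $Z_5$-component of $\Phi$.
\end{enumerate}
Your Step 1 does not reach this. The second radial solution $\tilde Z$ of $L_0\tilde Z=0$ tends to a nonzero constant at infinity in $\R^4$; it is not of $|y|^{-2}\log$ type. What orthogonality buys is compact support of $H$. And the elliptic solution is never connected to $\phi$ in Step 2. Your closing remark about subtracting the $Z_5$-projection names the right culprit but supplies no mechanism for decay faster than $|y|^{-2}$.

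\textbf{The bound on $\tilde h$ is false.} Your claim $|\tilde h|\lesssim\la_*^\nu\|h\|_{\nu,2+a}(1+|y|)^{-2-a}$ fails because $\int_{\mathcal D_{2R}}hZ_5=O(\la_*^\nu\|h\|_{\nu,2+a})$ while $\int_{\mathcal D_{2R}}Z_5^2\sim\log R$. Hence $\tilde c^0Z_5$ has size $\la_*^\nu(\log R)^{-1}|y|^{-2}$ and dominates $h$ for $|y|\gtrsim(\log R)^{1/a}$.

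This term is not perturbative; it actually obstructs the stated weight near the edge of the domain. Take $h=\la_*^\nu\mathbf 1_{\{1<|y|<2\}}$, so that $\tilde c^0\approx\kappa\la_*^\nu/\log R$ with $\kappa>0$. Fix $t_0$ and $R=R(t_0)$, and choose a bump $\chi\ge0$ equal to $1$ on $\{R/2<|y|<R\}$, supported in $\{R/4<|y|<3R/2\}$, with $|\Delta\chi|\lesssim R^{-2}$. Testing the equation against $\chi$ gives $\partial_\tau\int\phi\chi\le-c\,\la_*^\nu R^2/\log R+CR^2\sup_{\mathrm{supp}\,\chi}|\phi|$.

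If \eqref{mode0est444} held, that supremum would be $O(\la_*^\nu R^{-a}\log R)$. Over $t\in[t_0,\tfrac{T+t_0}{2}]$, whose length in $\tau=\int dt/\la^2$ is $\gtrsim R^2|\log T|^{-\frac{4}{n-2}}$, the integral $\int\phi\chi$ would then change by $\gtrsim\la_*^\nu R^4(\log R)^{-1}|\log T|^{-\frac{4}{n-2}}$. This contradicts $|\int\phi\chi|\lesssim\la_*^\nu R^{4-a}\log R$, since $R^a\ge T^{-a/2}$ beats every power of $\log R$ and $|\log T|$.

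So \eqref{mode0est444} cannot hold near $|y|\sim R$ for general $h$. The weight is tailored to right-hand sides with $\int_{\mathcal D_{2R}}hZ_5=0$. In the scheme above, $|H|\lesssim\la_*^\nu R^{2-a}(1+|y|)^{-2}$ for such $h$, but only $\la_*^\nu R^2(\log R)^{-1}(1+|y|)^{-2}$ once $\tilde c^0\neq0$. Any proof has to track the $\tilde c^0Z_5$ contribution separately, with a weaker bound near $|y|\sim R$, rather than fold it into an $O((1+|y|)^{-2-a})$ forcing.
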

The proof of Proposition \ref{modelpropmode0} can be carried out similar to that of \cite[Section 7]{Green16JEMS} (see also \cite[Section 5.2]{ni4d}). Proposition \ref{modelpropmode0} will be needed to describe the inner profile of $\phi^0$ when the inner--outer gluing scheme is carried out.

\begin{proof}[Proof of Proposition \ref{propmode0}]
Suppose
$$\phi^0(y,t)=\phi^0_1+e(t)Z_0(y)$$
with $\phi_1^0$ solving problem
\begin{equation}\label{linear-inner''}
\begin{cases}
\la^2\phi_t=\Delta_y \phi+3U^2(y)\phi+h^0(y,t)+\tilde c^0(t)Z_5-c(t)Z_0,~&\mbox{ in } \mathcal D_{2R}\times (0,T),\\
\phi(y,0)=0,~&\mbox{ in } \mathcal D_{2R(0)}.
\end{cases}
\end{equation}
For $e\in C^1((0,T))$, we get
$$\la^2\phi^0_t=\Delta_y \phi^0+3U^2\phi^0+h^0(y,t)+\tilde c^0(t)Z_5+[\la^2\dot e(t)-\mu_0 e(t)-c(t)]Z_0(y),$$
from which we see that a natural choice of bounded solution $e(t)$ to
$$\la^2\dot e(t)-\mu_0 e(t)-c(t)=0,~t\in(0,T)$$
is
\begin{equation}\label{choiceeee}
e(t)=-\int_t^T \exp \left(-\int_t^{\eta}\frac{\mu_0}{\la^2(s)}ds\right)\frac{c(\eta)}{\la^2(\eta)}d\eta.
\end{equation}
Therefore, $\phi^0$ solves problem \eqref{eqnmode0'''} with the initial condition $\phi^0(y,0)=e(0)Z_0(y)$. It is clear from \eqref{choiceeee} and \eqref{est-ccc} that
$$|e_0|\lesssim \la_*^{\nu}\|h^0\|_{\nu,2+a}.$$
So, to solve \eqref{eqnmode0'''}, we only need to consider \eqref{linear-inner''}.

Now we carry out an inner--outer gluing scheme for the mode $0$. Consider
\begin{equation}\label{iomode0}
\begin{cases}
\la^2\phi_t=\Delta_y \phi+3U^2(y)\phi+h^0(y,t)+\tilde c^0(t)Z_5-c(t)Z_0,~&\mbox{ in } \mathcal D_{2R}\times (0,T),\\
\phi(y,0)=0,~&\mbox{ in } \mathcal D_{2R(0)},\\
\phi=0,~&\mbox{ on } \partial \mathcal D_{2R}\times (0,T).\\
\end{cases}
\end{equation}
We shall construct $\phi^0$ solving \eqref{iomode0} of the form
$$\phi^0=\phi^0_{out}+\eta_{R_*}\phi^0_{in},$$
where
$$\eta_{R_*}:=\eta\left(\frac{|y|}{R_*}\right)$$
with $\eta$ defined in \eqref{def-cutoff} and
$$R_*=R^{\sigma}~\mbox{ for }~\sigma\in(0,1).$$
A solution $\phi^0$ to \eqref{iomode0} is found if the pair $(\phi^0_{out},\phi^0_{in})$ solves the system
\begin{equation}\label{outer-mode0}
\left\{
\begin{aligned}
&\la^2\partial_t \phi^0_{out}=\Delta_y \phi^0_{out}+3(1-\eta_{R_*})U^2(y)\phi^0_{out}+\mathtt C[\phi^0_{in}]+(1-\eta_{R_*})h^0,~\mbox{ in } \mathcal D_{2R}\times (0,T)\\
&\phi_{out}^0(y,0)=0,~\mbox{ in } \mathcal D_{2R(0)}\\
&\phi_{out}^0=0,~\mbox{ on } \partial \mathcal D_{2R}\times (0,T)\\
\end{aligned}
\right.
\end{equation}
$$ 
\left\{
\begin{aligned}
&\la^2\partial_t \phi^0_{in}=\Delta_y \phi_{in}^0+3U^2(y)\phi^0_{in}+3U^2(y)\phi^0_{out}+h^0+\tilde c^0Z_5-cZ_0,~\mbox{ in } \mathcal D_{2R_*}\times (0,T)\\
&\phi_{in}^0(y,0)=0,~\mbox{ in } \mathcal D_{2R_*(0)}\\
\end{aligned}
\right.
$$ 
where
$$\mathtt C[\phi^0_{in}]:=\phi_{in}^0(\Delta \eta_{R_*}-\la^2 \partial_t \eta_{R_*}) +2\nabla \eta_{R_*}\cdot \nabla \phi_{in}^0.$$
We first consider the outer part \eqref{outer-mode0}. For the model problem
\begin{equation*}
\begin{cases}
\la^2 \partial_t \psi=\Delta \psi+h^0~&\mbox{ in } \mathcal D_{2R}\times (0,T)\\
\psi(y,0)=0,~&\mbox{ in } \mathcal D_{2R(0)}\\
\psi=0,~&\mbox{ on } \partial \mathcal D_{2R}\times (0,T)\\
\end{cases}
\end{equation*}
we have
\begin{equation}\label{mode0est111}
\|\psi\|_{\nu,a}\lesssim\|h^0\|_{\nu,2+a}
\end{equation}
by the parabolic comparison. Then we apply the above estimate to the following problem
\begin{equation}\label{mode0eqn111}
\begin{cases}
\la^2 \partial_t \psi=\Delta \psi+3(1-\eta_{R_*})U^2\psi+h^0~&\mbox{ in } \mathcal D_{2R}\times (0,T)\\
\psi(y,0)=0,~&\mbox{ in } \mathcal D_{2R(0)}\\
\psi=0,~&\mbox{ on } \partial \mathcal D_{2R}\times (0,T)\\
\end{cases}
\end{equation}
and we claim that the solution $\psi$ to \eqref{mode0eqn111} satisfies
$$\|\psi\|_{\nu,a}\lesssim\|h^0\|_{\nu,2+a}.$$
Indeed, by \eqref{mode0est111}, we only need to estimate
\begin{equation*}
\begin{aligned}
3(1-\eta_{R_*})U^2\psi \lesssim&~ (1-\eta_{R_*})\frac{\la_*^{\nu}}{1+|y|^{4+a}}\|\psi\|_{\nu,a}
\lesssim R_*^{-2}\frac{\la_*^{\nu}}{1+|y|^{2+a}}\|\psi\|_{\nu,a}
\end{aligned}
\end{equation*}
and we conclude that
\begin{equation}\label{mode0est222}
\|3(1-\eta_{R_*})U^2\psi\|_{\nu,2+a}\lesssim R_*^{-2}\|\psi\|_{\nu,a}.
\end{equation}
So from \eqref{mode0est111} and \eqref{mode0est222}, we obtain
\begin{equation*}
\begin{aligned}
\|\psi\|_{\nu,a}\lesssim&~\|3(1-\eta_{R_*})U^2\psi+h^0\|_{\nu,2+a} 
\lesssim R_*^{-2}\|\psi\|_{\nu,a}+\|h^0\|_{\nu,2+a}
\end{aligned}
\end{equation*}
and for $R_*$ sufficiently large, it follows that
\begin{equation}\label{mode0est333}
\|\psi\|_{\nu,a}\lesssim \|h^0\|_{\nu,2+a}
\end{equation}
as desired.

We look for a solution $\phi_{out}^0$ to problem \eqref{outer-mode0}. By \eqref{mode0est333}, we get
\begin{equation}\label{mode0est666}
\|\phi^0_{out}\|_{\nu,a'}\lesssim \|\mathtt C[\phi_{in}^0]\|_{\nu,2+a'}+\|(1-\eta_{R_*})h^0\|_{\nu,2+a'},
\end{equation}
where $a'\in(0,a)$. Here $\phi^0_{out}$ defines a linear operator of $\phi^0_{in}$ and $h^0$. We write it as $\phi^0_{out}[\phi^0_{in},h^0]$.
 Now we need to find $\phi^0_{in}$ solving the inner part
\begin{equation}\label{inner-mode0''}
\left\{
\begin{aligned}
&\la^2\partial_t \phi^0_{in}=\Delta_y \phi_{in}^0+3U^2(y)\phi^0_{in}+3U^2(y)\phi^0_{out}[\phi^0_{in},h^0]+h^0+\tilde c^0Z_5-cZ_0,~\mbox{ in } \mathcal D_{2R_*}\times (0,T),\\
&\phi_{in}^0(y,0)=0,~\mbox{ in } \mathcal D_{2R_*(0)}.\\
\end{aligned}
\right.
\end{equation}
To solve the inner part \eqref{inner-mode0''}, we consider the fixed point problem
$$\phi^0_{in}=\mathcal T\left[3U^2(y)\phi^0_{out}[\phi^0_{in},h^0]+h^0\right]$$
in the function space equipped with the norm
\begin{equation*}
\|\phi^0_{in}\|_{0,*}:=\sup_{(y,t)\in \mathcal D_{2R_*}\times(0,T)} \la_*^{-\nu}(t) R_*^{a-4}(\log R)^{-1}(1+|y|^4)\left[|\phi^0_{in}|+(1+|y|)|\nabla\phi^0_{in}|\right].
\end{equation*}
We apply Proposition \ref{modelpropmode0} in the inner regime $\mathcal D_{2R_*}\times (0,T)$, then \eqref{mode0est444} gives
\begin{equation}\label{mode0est999}
\|\mathcal T[g]\|_{0,*}\lesssim \|g\|_{\nu,2+a}.
\end{equation}
We claim that
\begin{equation}\label{mode0est555}
\|\mathtt C[\phi^0_{in}]\|_{\nu,2+a'}\lesssim R_*^{a'-a}\log R\|\phi^0_{in}\|_{0,*}.
\end{equation}
Indeed, we evaluate
\begin{equation*}
\begin{aligned}
|\mathtt C[\phi^0_{in}]|=&~\left|\phi_{in}^0(\Delta \eta_{R_*}-\la^2 \partial_t \eta_{R_*}) +2\nabla \eta_{R_*}\cdot \nabla \phi_{in}^0\right|\\
\lesssim&~R_*^{-2}|\eta''|\la_*^{\nu}\frac{R_*^{4-a}\log R}{1+|y|^4} \|\phi^0_{in}\|_{0,*}
\lesssim \frac{\la_*^{\nu}\log R}{1+|y|^{2+a'}}R_*^{a'-a}\|\phi^0_{in}\|_{0,*}\\
\end{aligned}
\end{equation*}
which proves \eqref{mode0est555}. From \eqref{mode0est666} and \eqref{mode0est555}, we then get that
\begin{equation}\label{mode0est777}
\begin{aligned}
\|\phi^0_{out}\|_{\nu,a'}\lesssim&~ R_*^{a'-a}\log R\|\phi^0_{in}\|_{0,*}+\|(1-\eta_{R_*})h^0\|_{\nu,2+a'}\\
\lesssim&~ R_*^{a'-a}\log R\|\phi^0_{in}\|_{0,*}+ R_*^{a'-a}\|h^0\|_{\nu,2+a}.\\
\end{aligned}
\end{equation}
Next, we compute
\begin{equation*}
\begin{aligned}
|3U^2(y)\phi^0_{out}|\lesssim&~ \frac{\la_*^{\nu}}{1+|y|^{4+a'}}\|\phi^0_{out}\|_{\nu,a'}
\lesssim  \frac{\la_*^{\nu}}{1+|y|^{2+a}}\|\phi^0_{out}\|_{\nu,a'}.
\end{aligned}
\end{equation*}
So we get
\begin{equation}\label{mode0est888}
\|3U^2(y)\phi^0_{out}\|_{\nu,2+a}\lesssim \|\phi^0_{out}\|_{\nu,a'}.
\end{equation}
By \eqref{mode0est777} and \eqref{mode0est888}, we obtain
\begin{equation}\label{mode0est101010}
\|3U^2(y)\phi^0_{out}\|_{\nu,2+a}\lesssim R_*^{a'-a}\log R\|\phi^0_{in}\|_{0,*}+ R_*^{a'-a}\|h^0\|_{\nu,2+a}.
\end{equation}
Therefore, we conclude from \eqref{mode0est999} that
$$\left\|\mathcal T\left[3U^2(y)\phi^0_{out}[\phi^0_{in},h^0]+h^0\right]\right\|_{0,*}\lesssim R_*^{a'-a}\log R\|\phi^0_{in}\|_{0,*}+\|h^0\|_{\nu,2+a},$$
which shows that the operator
$$\phi_{in}^0\mapsto \mathcal T\left[3U^2(y)\phi^0_{out}[\phi^0_{in},h^0]+h^0\right]$$
is a contraction if $R_*$ is sufficiently large. A unique fixed point $\phi^0_{in}$ thus exists and
\begin{equation}\label{est-mode0in}
\|\phi^0_{in}\|_{0,*}\lesssim \|h^0\|_{\nu,2+a}.
\end{equation}
Replacing $a'$ by $a$ in the computations of \eqref{mode0est666} and \eqref{mode0est555}, we obtain
\begin{equation}\label{est-mode0out}
\|\phi^0_{out}\|_{\nu,a}\lesssim \log R\|h^0\|_{\nu,2+a}.
\end{equation}
Recalling $\phi^0=\phi^0_{out}+\eta_{R_*}\phi^0_{in}$ and combining \eqref{est-mode0in} and \eqref{est-mode0out}, we conclude
\begin{equation*}
|\phi^0(y,t)|+(1+|y|)|\nabla \phi^0(y,t)|\lesssim \la_*^{\nu}\log R\|h^0\|_{\nu,2+a}\begin{cases}
\frac{R^{\sigma(4-a)}}{1+|y|^4},~&\mbox{ for }~|y|\leq 2R^{\sigma},\\
\frac{1}{1+|y|^a},~&\mbox{ for }~2R^{\sigma}\leq|y|\leq 2R.\\
\end{cases}
\end{equation*}

Finally, we prove the estimate of $\tilde c^0$. By Proposition \ref{modelpropmode0}, we get
$$\tilde c^0(t)=-\frac{\int_{\mathcal D_{2R_*}}\big(h^0+3U^2(y)\phi^0_{out}[\phi^0_{in},h^0]\big) Z_5 dy}{\int_{\mathcal D_{2R_*}}|Z_5|^2 dy}.$$
Notice that $3U^2(y)\phi^0_{out}[\phi^0_{in},h^0]$ is linear in $h^0$. By \eqref{mode0est101010} and \eqref{est-mode0in}, we conclude that
\begin{equation*}
\begin{aligned}
\left|\int_{\mathcal D_{2R_*}}3U^2(y)\phi^0_{out}[\phi^0_{in},h^0] Z_5 dy\right|\lesssim&~ \la_*^{\nu}\left(R_*^{a'-a}\log R\|\phi^0_{in}\|_{0,*}+ R_*^{a'-a}\|h^0\|_{\nu,2+a}\right)\\
\lesssim&~\la_*^{\nu} R_*^{a'-a}\log R\|h^0\|_{\nu,2+a}.
\end{aligned}
\end{equation*}
The proof is complete.
\end{proof}

\medskip

\noindent {\bf 2. Construction at modes $1$ to $4$.}

\medskip

As we can see in mode $0$, the estimates are somewhat deteriorated inside the inner regime, and this will result in difficulties when solving the inner problem. One can observe that, for modes $1$ to $4$, the kernel function for the corresponding linearized operator has faster decay than mode $0$ which suggests that the estimates at modes $1$ to $4$ should be better than mode $0$'s. Inspired by the argument in \cite[Section 7]{17HMF}, we shall carry out the construction for modes $1$ to $4$ by means of the blow-up argument.

We perform the change of variable
$$\tau=\tau_{\la}(t)=\tau_0+\int_0^t \frac{ds}{\la_*^2(s)}$$
so that
$$\tau\sim \tau_0+\frac{|\log(T-t)|^{\frac{n}{n-2}}}{\la_* |\log T|^{\frac{2}{n-2}}}.$$
We choose the constant $\nu_1'>0$ so that
$$\tau^{-\nu_1'}\sim \la_*^{\nu_1}$$
for $\nu_1\in (0,1)$.

The main proposition for modes $1$ to $4$ is the following.
\begin{prop}\label{propmode1}
Assume $a_1\in (1,2),\nu_1\in(0,1),\|h^1\|_{\nu_1,2+a_1}<+\infty,$ and
$$\int_{\mathcal D_{2R}} h^1(y,\tau)Z_i(y)dy=0,~\mbox{for all}~\tau\in(\tau_0,+\infty),i=1,\cdots,4.$$
For sufficiently large $R$, there exists a pair $(\phi^1,e_0)$ solving
\begin{equation*}
\begin{cases}
\partial_{\tau} \phi^1=\Delta \phi^1+3U^{2}\phi^1+h^1(y,\tau),~&y\in \mathcal D_{2R}\times (\tau_0,\infty)\\
\phi^1(y,\tau_0)=e_0 Z_0(y),~&y\in \mathcal D_{2R}
\end{cases}
\end{equation*}
and $(\phi^1,e_0)=(\phi^1[h^1],e_0[h^1])$ defines a linear operator of $h^1$ that satisfies
\begin{equation*}
\|\phi^1\|_{{\rm in},\nu_1,a_1}\lesssim \|h^1\|_{\nu_1,2+a_1}
\end{equation*}
and
\begin{equation*}
|e_0[h^1]|\lesssim \tau^{-\nu_1'} \|h^1\|_{\nu_1,2+a_1}.
\end{equation*}
\end{prop}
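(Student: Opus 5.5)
The plan follows the blow-up argument of \cite[Section 7]{17HMF}, adapted to the four-dimensional operator $L_0$ restricted to modes $1$ to $4$. First I will obtain existence on finite time intervals. For $\tau_1>\tau_0$, solve the problem on $\mathcal D_{2R}\times(\tau_0,\tau_1)$ with zero Dirichlet data on $\partial\mathcal D_{2R}$ and initial datum $e_0Z_0$. By the mode decomposition, $h^1$ and $\phi^1$ live in the span of $\Theta_1,\dots,\Theta_4$. Since $Z_0$ is radial, it is orthogonal to these modes, so in fact $e_0$ can be taken to be $0$. The estimate $|e_0[h^1]|\lesssim\tau^{-\nu_1'}\|h^1\|_{\nu_1,2+a_1}$ is therefore trivial. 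Moreover, $L_0$ has no unstable direction in these modes: the only positive eigenfunction $Z_0$ is radial, and in mode $1$ the quadratic form is nonpositive, with kernel $Z_1,\dots,Z_4$. Hence standard parabolic theory gives a unique solution $\phi^1_{\tau_1}$, which is linear in $h^1$.

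The core step is a uniform a priori bound
\[
\|\phi^1_{\tau_1}\|_{{\rm in},\nu_1,a_1}\lesssim\|h^1\|_{\nu_1,2+a_1},
\]
with constant independent of $\tau_1$ and $R$. For this I will use the barrier $\tau^{-\nu_1'}(1+|y|^{a_1})^{-1}$. For $|y|\ge R_0$ with $R_0$ fixed large, I claim
\[
\Delta\big(|y|^{-a_1}\big)=-a_1(2-a_1)|y|^{-a_1-2},
\]
which is negative precisely because $a_1\in(0,2)$ in dimension $4$. Together with $U^2\lesssim|y|^{-4}$ and $\partial_\tau\tau^{-\nu_1'}=o(\tau^{-\nu_1'}|y|^{-2})$ on $|y|\le 2R$, this makes the barrier a supersolution outside $B_{R_0}$. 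The comparison principle then gives
\[
\|\phi\|_{{\rm in},\nu_1,a_1}\lesssim \|h^1\|_{\nu_1,2+a_1}+\sup_{|y|\le R_0}\tau^{\nu_1'}|\phi|.
\]
So it suffices to bound $\phi$ on a compact set, and this is where the blow-up argument enters. I argue by contradiction: suppose there are $h_k$ with $\|h_k\|_{\nu_1,2+a_1}\to0$, solutions with $\|\phi_k\|_{{\rm in},\nu_1,a_1}=1$ and $R_k\to\infty$, and points $(y_k,\tau_k)$ with $|y_k|\le R_0$ and $\tau_k^{\nu_1'}|\phi_k(y_k,\tau_k)|\ge c>0$. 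The estimate above forces the mass of $\phi_k$ to stay in $B_{R_0}$.

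There are two cases. If $\tau_k-\tau_0$ stays bounded, the functions converge to a solution with zero data and zero forcing, hence zero, which is a contradiction. Otherwise, set $\tilde\phi_k(y,s)=\tau_k^{\nu_1'}\phi_k(y,\tau_k+s)$. Parabolic estimates give local convergence to an ancient solution $\tilde\phi$ of $\partial_s\tilde\phi=L_0\tilde\phi$ on $\R^4\times(-\infty,0]$, with
\[
|\tilde\phi|\lesssim (1+|y|)^{-a_1},
\]
$\tilde\phi$ in mode $1$, and
\[
\int\tilde\phi\, Z_i=0,\qquad i=1,\dots,4.
\]
To see the orthogonality, I integrate the equation against $Z_i\eta_{R}$. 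The orthogonality of $h^1$ kills the forcing, and the decay $a_1>1$ with $Z_i\sim|y|^{-3}$ makes $\phi Z_i$ integrable and the boundary terms negligible, so $\int\phi_kZ_i$ is nearly constant in time and vanishes initially. A Liouville-type argument then gives $\tilde\phi\equiv0$. Since $a_1>1$, $\tilde\phi\in L^2$ and $\partial_s\tilde\phi\in L^2$ by scaling estimates. The energy $\int|\nabla\tilde\phi|^2-3U^2\tilde\phi^2$ is nonnegative and decreasing along the flow, while $\int\tilde\phi^2$ is nondecreasing backward in time yet bounded. Hence $\partial_s\tilde\phi=0$, so $\tilde\phi$ lies in the kernel, which by the orthogonality is trivial. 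This contradicts $|\tilde\phi(y_\infty,0)|\ge c$.

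Finally, I pass $\tau_1\to\infty$ via the uniform bound and local compactness, which yields a global solution with the stated estimate and the gradient part from scaled parabolic estimates. The main obstacle I expect is the Liouville step: justifying the $L^2$ energy argument for an ancient solution that is only known to be $O(|y|^{-a_1})$, and making sure the approximate orthogonality survives the limit on the growing domains $\mathcal D_{2R_k}$ with Dirichlet condition. There, $a_1>1$ is exactly what is used.
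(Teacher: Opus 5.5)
The genuine gap is your choice of zero Dirichlet data on $\partial\mathcal D_{2R}$. With that boundary condition, the orthogonality to $Z_i$ is no longer conserved, and it cannot pass to the blow-up limit. Since $L_0Z_i=0$, $\phi=0$ on the boundary and $\int_{\mathcal D_{2R}}h^1Z_i=0$, you get
\[
\frac{d}{d\tau}\int_{\mathcal D_{2R}}\phi\,Z_i\,dy=\int_{\partial\mathcal D_{2R}}\partial_\nu\phi\,Z_i\,dS .
\]
On $|y|=2R$ you have $|\nabla\phi|\lesssim\tau^{-\nu_1'}R^{-1-a_1}$ and $|Z_i|\sim R^{-3}$, so this flux is of size $\tau^{-\nu_1'}R^{-1-a_1}$ and generically nonzero.

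Nothing damps it. The Dirichlet realization of $L_0$ on modes $1$--$4$ has an eigenvalue $\approx -cR^{-4}$ with eigenfunction $\approx Z_i$. Even though $\int h^1Z_i=0$, $h^1$ has a projection of order $\tau^{-\nu_1'}R^{-1-a_1}$ onto that eigenfunction.
\begin{itemize}
\item For fixed $R$, the amplitude along this mode grows like $\tau^{1-\nu_1'}R^{-1-a_1}$, exceeding $\tau^{-\nu_1'}$ by a factor up to $R^{3-a_1}$.
\item For the growing $R(\tau)$ used here, $\int R^{-4}d\tau<\infty$. A defect created at early times therefore persists and eventually dominates $\tau^{-\nu_1'}$.
\end{itemize}
So the uniform bound you aim for is false for the Dirichlet problem. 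In your Case (b) the limit may be $\tilde\phi=cZ_i$, which has every property you list except orthogonality, so the Liouville step gives no contradiction.

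The paper avoids this by extending $h^1$ by zero and solving the Cauchy problem on all of $\mathbb R^4$; the proposition prescribes no boundary condition. There $\frac{d}{d\tau}\int_{\mathbb R^4}\phi Z_i=\int h^1Z_i=0$ holds exactly. This is verified by testing against $Z_i\eta_{R'}$ and letting $R'\to\infty$, which is where $a_1>1$ is used. The solution is then simply restricted to $\mathcal D_{2R}$.

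Two smaller points.

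First, your barrier $\tau^{-\nu_1'}(1+|y|^{a_1})^{-1}$ is a supersolution only where $|y|^2\lesssim\tau$, because $\partial_\tau\tau^{-\nu_1'}<0$. Claiming this on all of $|y|\le 2R$ requires $R^2\ll\tau$, which fails here. With $R=(T-t)^{-1/2}|\log(T-t)|^{\theta}$ and $\tau\sim\int dt/\lambda_*^2$,
\[
R^2/\tau\sim|\log T|^{\frac{4}{n-2}}|\log(T-t)|^{2\theta-\frac{2n}{n-2}},
\]
which is of order $|\log T|^{2\theta-2}\gg1$ near $t=0$. On $\mathbb R^4$ this region is unbounded anyway, so it needs a separate argument; there the solution is essentially $\int h\,d\tau\lesssim\tau^{1-\nu_1'}|y|^{-2-a_1}$. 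The paper handles large $|y|$ with a second blow-up instead: it rescales by $|y_k|$ and uses a Liouville property for the heat equation in $\mathbb R^4\setminus\{\hat e\}$, with the supersolution $(\rho^2+c\tau)^{-a_1/2}+\epsilon\rho^{-2}$.

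Second, $|\tilde\phi|\lesssim(1+|y|)^{-a_1}$ with $a_1<2$ does not put $\tilde\phi$ in $L^2(\mathbb R^4)$, so you cannot use boundedness of $\int\tilde\phi^2$. Run the energy argument on $\partial_s\tilde\phi$ as the paper does:
\begin{itemize}
\item $\partial_s\tilde\phi\in L^2$.
\item The energy $\int|\nabla\tilde\phi|^2-3U^2\tilde\phi^2$ is finite (this needs $a_1>1$), nonnegative and nonincreasing, so $\int_{-\infty}^0\|\partial_s\tilde\phi\|_{L^2}^2\,ds<\infty$.
\item $\|\partial_s\tilde\phi\|_{L^2}^2$ is itself nonincreasing in $s$.
\end{itemize}
Together these force $\partial_s\tilde\phi\equiv0$.
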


In order to prove Proposition \ref{propmode1}, we consider the following Cauchy problem
\begin{equation}\label{blowupmode1''}
\begin{cases}
\partial_{\tau} \phi^1=\Delta \phi^1+3U^{2}\phi^1+h^1(y,\tau)-c(\tau)Z_0(y),&~y\in \mathbb{R}^4,~\tau\geq\tau_0\\
\phi^1(y,\tau_0)=0,&~y\in \mathbb{R}^4
\end{cases}
\end{equation}
with $h^1$ supported in $\mathcal D_{2R}\times (\tau_0,+\infty)$ and $\|h^1\|_{\nu_1,2+a_1}<+\infty$ in the $(y,\tau)$ variable, where $\nu_1\in(0,1)$ and $a_1\in(1,2)$. For notational convenience, we denote $\phi^1$ by $\phi$ and $h^1$ by $h$ in the following lemma.
\begin{lemma}\label{lemmamode1}
Assume $a_1\in (1,2),\nu_1\in(0,1),\|h\|_{\nu_1,2+a_1}<+\infty,$ and
$$\int_{\mathbb{R}^4} h(y,\tau)Z_i(y)dy=0,~\mbox{for all}~\tau\in(\tau_0,+\infty),~i=1,\cdots,4.$$
For $\tau_1$ sufficiently large, the solution $\phi(y,\tau)$ to
\begin{equation}\label{blowupmode1}
\left\{
\begin{aligned}
&\partial_{\tau} \phi=\Delta \phi+3U^{2}\phi+h(y,\tau)-c(\tau)Z_0(y),~~~y\in \mathbb{R}^4,~\tau\geq\tau_0\\
&\int_{\mathbb{R}^4} \phi(y,\tau) Z_0(y)dy=0,~~~\mbox{for all}~\tau\in(\tau_0,+\infty),\\
&\phi(y,\tau_0)=0,~~~y\in \mathbb{R}^4
\end{aligned}
\right.
\end{equation}
satisfies
\begin{equation}\label{est-blowupmode1}
\|\phi(y,\tau)\|_{a_1,\tau_1}\lesssim\|h\|_{2+a_1,\tau_1}.
\end{equation}
Further,
\begin{equation*}
|c(\tau)|\lesssim \tau^{-\nu_1'} R^{a_1} \|h\|_{2+a_1,\tau_1}~~\mbox{for}~\tau\in[\tau_0,\tau_1),
\end{equation*}
where $\|h\|_{b,\tau_1}:=\sup_{\tau\in [\tau_0,\tau_1)}\tau^{\nu_1'}\sup\limits_{y\in \mathbb{R}^4}(1+|y|^b)|h(y,\tau)|$.
\end{lemma}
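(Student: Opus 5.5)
We prove Lemma \ref{lemmamode1} by a blow-up (contradiction) argument, in the spirit of \cite[Section 7]{17HMF}.

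\emph{Well-posedness.} Given $h$, the coefficient $c(\tau)$ is determined by the constraint $\int_{\R^4}\phi Z_0\,dy=0$. Testing the equation against $Z_0$ and using $L_0Z_0=\mu_0Z_0$ gives
\[
0=\frac{d}{d\tau}\int\phi Z_0=\int hZ_0-c(\tau)\int Z_0^2 .
\]
Hence $c(\tau)=\int hZ_0/\int Z_0^2$. Since $Z_0$ decays exponentially by \eqref{decay-Z_0}, this immediately yields $|c(\tau)|\lesssim \tau^{-\nu_1'}\|h\|_{2+a_1,\tau_1}$, which is stronger than the claimed bound with the factor $R^{a_1}$. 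With $c$ fixed, the Cauchy problem \eqref{blowupmode1} is linear with bounded coefficients, so it has a unique solution on $[\tau_0,\tau_1)$. The constraint is propagated automatically, and the orthogonality $\int hZ_i=0$ is propagated to $\int\phi Z_i=0$, $i=1,\dots,4$, by the same computation using $L_0Z_i=0$. Moreover $\phi$ stays in the mode-one class, that is, it lies in the span of $\Theta_1,\dots,\Theta_4$ in the angular variable, because $h$ does.

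\emph{A priori estimate by contradiction.} Suppose \eqref{est-blowupmode1} fails. Then there are sequences $\tau_1^k$, $h_k$ with $\|h_k\|_{2+a_1,\tau_1^k}\to0$ and solutions $\phi_k$ with $\|\phi_k\|_{a_1,\tau_1^k}=1$. Choose $(y_k,\tau_k)$ with $\tau_k<\tau_1^k$ and
\[
\tau_k^{\nu_1'}(1+|y_k|^{a_1})|\phi_k(y_k,\tau_k)|\ge \tfrac12 .
\]
We distinguish three cases.

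\textbf{Case 1:} $|y_k|$ bounded and $\tau_k-\tau_0\to\infty$. Set $\tilde\phi_k(y,s)=\tau_k^{\nu_1'}\phi_k(y,\tau_k+s)$. Since $(\tau_k+s)^{-\nu_1'}/\tau_k^{-\nu_1'}\to1$ locally uniformly in $s$, parabolic estimates give convergence to an ancient solution of $\partial_s\tilde\phi=L_0\tilde\phi$ with $|\tilde\phi|\lesssim(1+|y|)^{-a_1}$. This limit is of mode one, orthogonal to $Z_0,\dots,Z_4$, and nonzero at a bounded point. By the Liouville-type result for the linearized flow, obtained via a spectral decomposition in which only the kernel $Z_1,\dots,Z_4$ and the unstable $Z_0$ can survive backward in time, $\tilde\phi\equiv0$. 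This is a contradiction.

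\textbf{Case 1':} $|y_k|$ bounded and $\tau_k-\tau_0$ bounded. The limit solves the same equation with zero initial data and zero forcing, so it vanishes. This is again a contradiction.

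\textbf{Case 2:} $|y_k|\to\infty$. Rescale by $\hat\phi_k(z,s)=\tau_k^{\nu_1'}|y_k|^{a_1}\phi_k(|y_k|z,\tau_k+|y_k|^2s)$. The potential $3U^2$ disappears in the limit, and one obtains a solution of the heat equation on $\R^4\setminus\{0\}$ bounded by $|z|^{-a_1}$. It is either ancient, or has zero initial data when $|y_k|^2\gtrsim\tau_k$; in the latter case the factor $\tau^{\nu_1'}$ is harmless because $\nu_1'$ is small. Since $a_1<2$, the singularity at the origin is removable in four dimensions, and a Liouville argument for the heat equation with decay $|z|^{-a_1}$, $a_1>0$, forces $\hat\phi\equiv0$, contradicting $|\hat\phi(\hat z,0)|\ge\frac12$ at a point with $|\hat z|=1$.

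\emph{Main obstacle.} The hardest step is the Liouville theorem in Case 1 for mode one. The mode-one radial operator $\partial_\rho^2+\frac3\rho\partial_\rho-\frac3{\rho^2}+3U^2$ has the one-dimensional bounded kernel $U'(\rho)\sim\rho^{-3}$. We must show that an ancient solution decaying like $\rho^{-a_1}$ with $a_1\in(1,2)$ and orthogonal to $U'$ is zero. The approach is to use an energy and spectral argument: project onto $U'$, where the projection vanishes, note that the quadratic form is coercive on the complement in mode one, and obtain exponential decay backward in time. Combined with the boundedness of the solution, this forces it to vanish. The condition $a_1>1$ ensures that the projection integrals converge, since $\rho^{-a_1}\rho^{-3}\rho^3$ is integrable at infinity. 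Finally, $\|\phi\|_{a_1,\tau_1}$ is finite for each fixed $\tau_1$, and the estimate is uniform in $\tau_1$, so we can let $\tau_1\to\infty$.
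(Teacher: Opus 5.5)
There is a genuine gap, and it sits in the step you yourself flag: the Liouville theorem of Case 1. The rest of your scheme matches the paper's. That includes $c(\tau)=\int hZ_0/\int Z_0^2$ (your bound without $R^{a_1}$ is correct and stronger), the propagation of $\int\phi Z_i=0$, the normalized contradiction sequence, the inner case producing an ancient solution of $\partial_\tau\tilde\phi=L_0\tilde\phi$, and the far-field case producing the heat equation on $\mathbb{R}^4\setminus\{0\}$. Your explicit use of the mode-one structure to exclude $Z_5$ from the kernel is a point the paper leaves implicit.

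The mechanism you propose for Case 1 fails for two reasons.
\begin{enumerate}
\item The form $B(\phi,\phi)=\int|\nabla\phi|^2-3U^2\phi^2$ is nonnegative on mode one orthogonal to $U'$, but it is not coercive in $L^2$. Since $U^2\to0$ at infinity, $-L_0$ has essential spectrum $[0,\infty)$, also in mode one after removing $U'$. So there is no spectral gap and no exponential growth backward in time.
\item You apply the argument to $\tilde\phi$, which is only known to satisfy $|\tilde\phi|\lesssim(1+|y|)^{-a_1}$ with $a_1<2$. Since $\int_{\mathbb{R}^4}(1+|y|)^{-2a_1}dy=\infty$, nothing guarantees $\tilde\phi(\cdot,\tau)\in L^2$. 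Neither an energy estimate nor a spectral decomposition of $\tilde\phi$ can even be set up.
\end{enumerate}

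The missing idea, used in the paper, is to work with $\tilde\phi_\tau$ instead.
\begin{enumerate}
\item Parabolic scaling estimates give $|\tilde\phi_\tau|\lesssim(1+|y|)^{-2-a_1}$, so $\tilde\phi_\tau(\cdot,\tau)\in L^2$ uniformly for $\tau\le0$. It solves the same equation and is orthogonal to $Z_0$.
\item Then $\frac12\frac{d}{d\tau}\int\tilde\phi_\tau^2=-B(\tilde\phi_\tau,\tilde\phi_\tau)\le0$; only nonnegativity of $B$ is used.
\item Also $\int\tilde\phi_\tau^2=-\frac12\frac{d}{d\tau}B(\tilde\phi,\tilde\phi)$. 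Here $B(\tilde\phi,\tilde\phi)$ is finite and bounded uniformly in $\tau$ because $|\nabla\tilde\phi|^2\lesssim(1+|y|)^{-2-2a_1}$ is integrable on $\mathbb{R}^4$ when $a_1>1$. This, and not only the convergence of $\int\tilde\phi Z_i$, is where $a_1>1$ enters.
\item Hence $\tau\mapsto\|\tilde\phi_\tau\|_{L^2}^2$ is nonincreasing and integrable on $(-\infty,0]$, so it vanishes.
\item So $\tilde\phi$ is a bounded stationary solution of $L_0\tilde\phi=0$ in mode one, hence a combination of $Z_1,\dots,Z_4$. Orthogonality gives $\tilde\phi=0$.
\end{enumerate}
Your spectral-decomposition idea also works without any gap, but only when applied to $\tilde\phi_\tau$, which is bounded in $L^2$ backward in time.

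A minor point in Case 2: the lemma only assumes $\nu_1\in(0,1)$. So the degenerate factor $(\tau_k/\tau)^{\nu_1'}$ near the initial time should be handled by its time-integrability ($\nu_1'<1$), not by smallness of $\nu_1'$.
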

\begin{proof}
Note that problem \eqref{blowupmode1} is equivalent to problem \eqref{blowupmode1''} for
$$c(\tau)=\frac{\int_{\mathbb R^4}h(y,\tau)Z_0(y)dy}{\int_{\mathbb R^4}|Z_0(y)|^2dy}.$$
By the time decay of $h$ and spatial decay of $Z_0$ (see \eqref{decay-Z_0}), we have
\begin{equation}\label{2}
|c(\tau)|\lesssim \tau^{-\nu_1'} R^{a_1} \|h\|_{2+a_1,\tau_1}.
\end{equation}
Now we prove \eqref{est-blowupmode1} by blow-up argument.

By standard parabolic theory, for any $R'>0$, there exists a constant $K$ depending on $R'$ and $\tau_1$ such that
$$|\phi(y,\tau)|\leq K,~\mbox{in}~B_{R'}\times[\tau_0,\tau_1].$$
It is easy to check that $\bar{\phi}=\frac{C}{1+|y|^{a_1}}$ is a super-solution to the original equation \eqref{blowupmode1}. Thus, $\|\phi\|_{a_1,\tau_1}<+\infty$. We claim that
$$\int_{\mathcal D_{2R}} \phi Z_i=0~\mbox{for all}~\tau\in[\tau_0,\tau_1],~i=1,\cdots,4.$$
Indeed, we multiply \eqref{blowupmode1} by $Z_i\eta_{R'},$ where $\eta_{R'}:=\eta(\frac{|y|}{R'})$ and $\eta$ is the standard cut-off function defined in \eqref{def-cutoff}. Then we have
$$\int_{\mathbb{R}^4}\phi(\cdot, \tau)\cdot Z_i\eta_{R'} = \int_{\tau_0}^\tau ds\int_{\mathbb{R}^4}(\phi(\cdot, s)\cdot L_0[\eta_{R'} Z_i] + h Z_i\eta_{R'} - c(s)Z_0Z_i\eta_{R'}).$$
Further computation gives
\begin{equation*}
\begin{aligned}
&\quad\int_{\mathbb{R}^4}[\phi(\cdot, s)\cdot L_0[\eta_{R'} Z_i] + h Z_i\eta_{R'} - c(s)Z_0Z_i\eta_{R'}].\\
&=\int_{\mathbb{R}^4}\phi(\cdot, s)[\eta_{R'}L_0[Z_i]+Z_i \Delta\eta_{R'}+2\nabla\eta_{R'}\cdot\nabla Z_i] + h Z_i\eta_{R'} - c(s)Z_0Z_i\eta_{R'}=O((R')^{-\epsilon})
\end{aligned}
\end{equation*}
for some $\epsilon>0$.
By taking $R'\rightarrow +\infty$, we get the desired result.

Now we want to prove
$$\|\phi\|_{a_1,\tau_1}\lesssim\|h\|_{2+a_1,\tau_1}.$$
We prove by contradiction. Suppose that there exist sequences $\tau_1^k\to +\infty$ and $\phi_k$, $h_k$, $c_k$ satisfying
\begin{equation*}
\left\{
\begin{aligned}
&\partial_\tau\phi_k = \Delta\phi_k + 3U^{2}(y)\phi_k + h_k - c_k(\tau)Z_0(y),~ y\in \mathbb{R}^4,~ \tau\geq \tau_0,\\
&\int_{\mathbb{R}^4}\phi_k(y, \tau)\cdot Z_i(y)dy = 0\text{ for all }\tau\in [\tau_0,\tau_1^k), ~i= 0, 1,\cdots, 4,\\
&\phi_k(y,\tau_0) = 0, ~y\in \mathbb{R}^4,
\end{aligned}
\right.
\end{equation*}
and
$$ 
\|\phi_k\|_{a_1,\tau_1^k}=1,\quad \|h_k\|_{2+a_1,\tau_1^k}\to 0.
$$ 
By \eqref{2}, we know $\sup_{\tau\in (\tau_0, \tau_1^k)}\tau^{\nu_1'} c_k(\tau)\to 0$.
We claim that
\begin{equation}\label{e5:37}
\sup_{\tau_0 < \tau < \tau_1^k}\tau^{\nu_1'}|\phi_k(y,\tau)|\to 0
\end{equation}
uniformly on compact subsets of $\mathbb{R}^4$. We prove \eqref{e5:37} by contradiction.

\noindent {\bf Case 1.} For some $|y_k|\leq M$ and $\tau_0 < \tau_2^k < \tau_1^k$, if
\begin{equation*}
(\tau_2^k)^{\nu_1'}|\phi_k(y_k,\tau_2^k)|\geq \frac{1}{2},
\end{equation*}
then we know that $\tau_2^k\to +\infty$. Define
\begin{equation*}
\tilde{\phi}_k(y,\tau) = (\tau_2^k)^{\nu_1'}\phi_k(y,\tau_2^k + \tau).
\end{equation*}
Then
\begin{equation*}
\partial_\tau\tilde{\phi}_k = L_0[\tilde{\phi}_k] + \tilde{h}_k - \tilde{c}_k(\tau)Z_0(y)\text{ in }\mathbb{R}^4\times (\tau_0-\tau_2^k,0].
\end{equation*}
Due to the spatial decay of $h$ and $c$, we know $\tilde{h}_k\to 0$, $\tilde{c}_k\to 0$. By comparison, we get
\begin{equation*}
|\tilde{\phi}_k(y,\tau)|\leq \frac{1}{1+|y|^{a_1}}\text{ in }\mathbb{R}^4\times (\tau_0-\tau_2^k,0].
\end{equation*}
Hence, up to a subsequence, $\tilde{\phi}_k\to\tilde{\phi}$ uniformly on compact subsets with $\tilde{\phi}\neq 0$ and
\begin{equation}\label{eqn-case1}
\left\{
\begin{aligned}
&\partial_\tau\tilde{\phi} =\Delta\tilde{\phi} + 3U^{2}(y)\tilde{\phi}~\text{ in }~\mathbb{R}^4\times (-\infty, 0],\\
&\int_{\mathbb{R}^4}\tilde{\phi}(y, \tau)\cdot Z_j(y)dy = 0~\text{ for all }~\tau\in (-\infty, 0], ~ j= 0, 1,\cdots, 4,\\
&|\tilde{\phi}(y,\tau)|\leq \frac{1}{1+|y|^{a_1}}~\text{ in }~\mathbb{R}^4\times (-\infty, 0],\\
&\tilde{\phi}(y,\tau_0) = 0, ~y\in\mathbb{R}^4.
\end{aligned}
\right.
\end{equation}
Note that the orthogonality conditions above are well-defined if $a_1>1$. We now claim that $\tilde{\phi} = 0$. Indeed, by parabolic regularity theory, $\tilde{\phi}(y,\tau)$ is smooth. By scaling argument, we get
\begin{equation*}
\frac{1}{1+|y|}|\nabla\tilde{\phi}| + |\tilde{\phi}_\tau| + |\Delta\tilde{\phi}|\lesssim \frac{1}{1+|y|^{2+a_1}}.
\end{equation*}
Differentiating \eqref{eqn-case1} with respect to $\tau$, we get $\partial_\tau\tilde{\phi}_\tau =\Delta\tilde{\phi}_\tau + 3U^{2}(y)\tilde{\phi}_\tau$ and
\begin{equation*}
\frac{1}{1+|y|}|\nabla\tilde{\phi}_\tau| + |\tilde{\phi}_{\tau\tau}| + |\Delta\tilde{\phi}_\tau|\lesssim \frac{1}{1+|y|^{4+a_1}}.
\end{equation*}
Differentiating \eqref{eqn-case1} with respect to $\tau$ and integrating, we get
\begin{equation*}
\frac{1}{2}\partial_\tau\int_{\mathbb{R}^4}|\tilde{\phi}_\tau|^2 + B(\tilde{\phi}_\tau, \tilde{\phi}_\tau) = 0,
\end{equation*}
where
\begin{equation*}
B(\tilde{\phi}, \tilde{\phi}) = \int_{\mathbb{R}^4}|\nabla\tilde{\phi}|^2 - 3U^{2}(y)|\tilde{\phi}|^2dy.
\end{equation*}
Since $\int_{\mathbb{R}^4}\tilde{\phi}(y, \tau)\cdot Z_j(y)dy = 0$ for all $\tau\in (-\infty, 0]$, $j= 0, 1,\cdots, 4 $, $B(\tilde{\phi}, \tilde{\phi})\geq 0$. Also, we have
\begin{equation*}
\int_{\mathbb{R}^4}|\tilde{\phi}_\tau|^2 = -\frac{1}{2}\partial_\tau B(\tilde{\phi}, \tilde{\phi}).
\end{equation*}
From above, we get
\begin{equation*}
\partial_\tau\int_{\mathbb{R}^4}|\tilde{\phi}_\tau|^2 \leq 0,\quad \int_{-\infty}^0d\tau\int_{\mathbb{R}^4}|\tilde{\phi}_\tau|^2 < +\infty.
\end{equation*}
Hence $\tilde{\phi}_\tau = 0$. So $\tilde{\phi}$ is independent of $\tau$ and $L_0[\tilde{\phi}] = 0$. Since $\tilde{\phi}$ is bounded, by the non-degeneracy of $L_0$, $\tilde{\phi}$ is a linear combination of $Z_j$, $j = 1,\cdots, 4$. From orthogonality conditions $\int_{\mathbb{R}^4}\tilde{\phi}\cdot Z_j = 0$, $j = 1,\cdots, 4$, we obtain $\tilde{\phi} = 0$, a contradiction. Thus,
$$\sup_{\tau_0 < \tau < \tau_1^k}\tau^{\nu_1'}|\phi_k(y,\tau)|\to 0.$$

\noindent {\bf Case 2.}
Suppose there exists $y_k$ with $|y_k|\to +\infty$ such that
\begin{equation*}
(\tau_2^k)^{\nu_1'}(1+|y_k|^{a_1})|\phi_k(y_k, \tau_2^k)|\geq \frac{1}{2}.
\end{equation*}
Let
\begin{equation*}
\tilde{\phi}_k(z, \tau):=(\tau_2^k)^{\nu_1'}|y_k|^{a_1}\phi_k(y_k+|y_k|z,|y_k|^{2}\tau + \tau_2^k).
\end{equation*}
Then
\begin{equation*}
\partial_\tau \tilde{\phi}_k = \Delta\tilde{\phi}_k + a_k\tilde{\phi}_k + \tilde{h}_k(z,\tau),\quad a_k=3U^2(y_k+|y_k|z),
\end{equation*}
where
\begin{equation*}
\tilde{h}_k(z,\tau) = (\tau_2^k)^{\nu_1'}|y_k|^{2+a_1}h_k(y_k+|y_k|z,|y_k|^{2}\tau + \tau_2^k)-(\tau_2^k)^{\nu_1'}|y_k|^{2+a_1}c(|y_k|^{2}\tau+\tau_2^k)Z_0(y_k+|y_k|z).
\end{equation*}
By the definition of $h_k$,
\begin{equation*}
|\tilde{h}_k(z,\tau)| \lesssim o(1)\frac{((\tau_2^k)^{-1}|y_k|^{2}\tau + 1)^{-\nu_1'}}{|\hat{y}_k+z|^{2+a_1}}
\end{equation*}
with
$
\hat{y}_k = \frac{y_k}{|y_k|}\to -{\hat e}
$
and $|\hat e|= 1$. Thus $\tilde{h}_k(z,\tau)\to 0$ uniformly on compact subsets of $\mathbb{R}^4\setminus\{\hat e\}\times (-\infty, 0]$ and $a_k$ has the same property. Moreover, $|\tilde{\phi}_k(0, \tau_0)|\geq \frac{1}{2}$ and
\begin{equation*}
|\tilde{\phi}_k(z,\tau)| \lesssim \frac{((\tau_2^k)^{-1}|y_k|^{2}\tau + 1)^{-\nu_1'}}{|\hat{y}_k+z|^{a_1}}.
\end{equation*}
Hence we may assume $\tilde{\phi}_k\to \tilde{\phi}\neq 0$ uniformly on compact subsets of $\mathbb{R}^4\setminus\{{\hat e}\}\times (-\infty,0]$ with $\tilde{\phi}$ satisfying
\begin{equation}\label{e5:39}
\tilde{\phi}_\tau = \Delta\tilde{\phi}\quad\text{in }\mathbb{R}^4\setminus\{{\hat e}\}\times (-\infty,0]
\end{equation}
and
\begin{equation}\label{e5:40}
|\tilde{\phi}(z,\tau)|\leq |z-{e}|^{-a_1}\quad\text{in }\mathbb{R}^4\setminus\{{\hat e}\}\times (-\infty,0].
\end{equation}
{\bf Claim}: functions $\tilde{\phi}$ satisfying (\ref{e5:39}) and (\ref{e5:40}) are 0.

Without loss of generality, we assume ${\hat e} = 0$. Then
$$ 
\left\{
\begin{aligned}
&\tilde{\phi}_\tau = \Delta\tilde{\phi},\quad\text{in }\mathbb{R}^4\setminus\{0\}\times (-\infty,0],\\
&|\tilde{\phi}(z,\tau)|\leq |z|^{-a_1},\quad\text{in }\mathbb{R}^4\setminus\{0\}\times (-\infty,0].
\end{aligned}
\right.
$$ 

We consider the function $\bar u(\rho,\tau) = (\rho^{2} + c\tau)^{-\frac{a_1}{2}} + \epsilon \rho^{-2}$ for some constant $c > 0$.
Direct computations give us
$$\bar u_{\tau}-\Delta \bar u=a_1(\rho^2+c\tau)^{-\frac{a_1}{2}-2}\left[(2-a_1-\frac{c}{2})\rho^2+(4c-\frac{c^2}{2})\tau\right].$$
Then we know that if $a_1<2$, we can always find $c>0$ such that $\bar u(\rho,\tau+M)$ is a super-solution, where $M$ is a large constant. Thus, $|\tilde{\phi}|\leq 2 \bar u(\rho,\tau+M)$. By letting $M\to \infty$ and the arbitrariness of $\epsilon$, we get $\tilde{\phi} = 0$, a contradiction. The proof is complete.
\end{proof}

\begin{proof}[Proof of Proposition \ref{propmode1}]
From Lemma \ref{lemmamode1}, for any $\tau_1 > \tau_0$ with $\tau_0$ fixed sufficiently large, we have
\begin{equation*}
|\phi^1(y,\tau)|\lesssim\tau^{-\nu_1'}(1+|y|)^{-a_1}\|h^1\|_{2+a_1, \tau_1}\text{ for all }\tau\in (\tau_0, \tau_1), \,\,y\in \mathbb{R}^4
\end{equation*}
and
\begin{equation*}
|c(\tau)|\leq \tau^{-\nu_1'}R^{a_1}\|h^1\|_{2+a_1,\tau_1}\text{ for all }\tau\in (\tau_0,\tau_1).
\end{equation*}
By assumption, $\|h^1\|_{\nu_1,2+a_1} < +\infty$ and $\|h^1\|_{2+a_1, \tau_1}\leq \|h^1\|_{\nu_1,2+a_1}$ for an arbitrary $\tau_1$. It then follows that
\begin{equation*}
|\phi^1(y,\tau)|\lesssim\tau^{-\nu_1'}(1+|y|)^{-a_1}\|h^1\|_{\nu_1,2+a_1}\text{ for all }\tau\in (\tau_0, \tau_1),\,\, y\in \mathbb{R}^4
\end{equation*}
and
\begin{equation*}
|c(\tau)|\leq \tau^{-\nu_1'}R^{a_1}\|h^1\|_{\nu_1,2+a_1}\text{ for all }\tau\in (\tau_0, \tau_1).
\end{equation*}
By the arbitrariness of $\tau_1$, we have
\begin{equation*}
|\phi^1(y,\tau)|\lesssim\tau^{-\nu_1'}(1+|y|)^{-a_1}\|h^1\|_{\nu_1,2+a_1}\text{ for all }\tau\in (\tau_0, +\infty),\,\, y\in \mathbb{R}^4
\end{equation*}
and
\begin{equation*}
|c(\tau)|\leq \tau^{-\nu_1'}R^{a_1}\|h\|_{\nu_1,2+a_1}\text{ for all }\tau\in (\tau_0, +\infty).
\end{equation*}
The gradient estimates follows from the scaling argument and the standard parabolic theory. The proof is complete.
\end{proof}

\medskip

\noindent {\bf 3. Construction at higher modes $j\geq 5$.}

\medskip

For higher modes $j\geq 5$, we recall that
$$h^{\perp}=\sum\limits_{j=5}^{\infty} h_j(r,t)\Theta_j,~\phi^{\perp}[h^{\perp}]=\sum\limits_{j=5}^{\infty} \phi_j(r,t)\Theta_j$$
and let $\phi^{\perp}[h^{\perp}]$ solve the following problem
\begin{equation*}
\begin{cases}
\la^2\phi_t=\Delta_y \phi +3U^2(y)\phi+h^{\perp},~&\mbox{ in } \mathcal D_{2R}\times(0,T),\\
\phi=0,~&\mbox{ on } \partial \mathcal D_{2R}\times (0,T),\\
\phi(\cdot,0)=0,~&\mbox{ in } \mathcal D_{2R}.\\
\end{cases}
\end{equation*}
Similarly, it follows from \cite[Section 7]{Green16JEMS} that
\begin{equation}\label{highermodes}
|\phi^{\perp}(y,t)|+(1+|y|)|\nabla\phi^{\perp}(y,t)|\lesssim \la_*^{\nu} \frac{1}{1+|y|^a}\|h^{\perp}\|_{\nu,2+a}.
\end{equation}

\begin{proof}[Proof of Proposition \ref{lineartheory}]
Recall that
$$\phi[h]=\phi^0[h^0]+\phi^1[h^1]+\phi^{\perp}[h^{\perp}].$$
The validity of Proposition \ref{lineartheory} is concluded from Proposition \ref{propmode0}, Proposition \ref{propmode1} and \eqref{highermodes}. The proof is complete.
\end{proof}

\medskip


\section{Solving the inner--outer gluing system}\label{sec-innerouter}

\medskip

In this section, we shall solve the inner--outer gluing system by the linear theories developed in Section \ref{sec-linearouter} and Section \ref{sec-linearinner}, and the Schauder fixed point theorem. Our goal is to find a solution $(\phi^0,\phi^1,\phi^{\perp},\psi,\la,\xi)$ to the inner--outer gluing system in Section \ref{sec-inneroutergluingscheme} so that the desired blow-up solution is constructed. We shall solve the inner--outer gluing system in the function space $\mathcal X$ defined in \eqref{def-mX}. First, we make some assumptions about the parameter functions.

We write
$$\la_*(t)=\frac{2}{n-2}\frac{|\log T|^{\frac{2}{n-2}}(T-t)}{|\log(T-t)|^{\frac{n}{n-2}}}$$
and assume that for some numbers $c_1,c_2>0$,
$$c_1|\dot\la_*(t)|\leq |\dot\la(t)|\leq c_2 |\dot\la_*(t)|~\mbox{ for all }~t\in(0,T).$$

In Section \ref{subsec-outer} and Section \ref{subsec-inner}, for given $\|\phi^0\|_{0,\sigma,\nu,a}$, $\|\phi^1\|_{{\rm in},\nu_1,a_1}$, $\|\phi^{\perp}\|_{{\rm in},\nu,a}$, $\|\psi\|_*$, $\|Z^*\|_{\infty}$, $\|\la\|_F$, $\|\xi\|_G$ bounded,
we shall first estimate right hand sides $\mathcal G(\phi,\psi,\la,\xi)$ and $\mathcal H(\phi,\psi,\la,\xi)$ in the inner and outer problems. Here the above norms are defined in \eqref{def-normm0}, \eqref{def-normnua}, \eqref{def-norm*}, \eqref{def-normla} and \eqref{def-normxi}.

\medskip

\subsection{The outer problem: estimates of \texorpdfstring{$\mathcal G$}{G}}\label{subsec-outer}

\medskip

Recall from \eqref{outer} that the outer problem
\begin{equation*}
\psi_t =\Delta_{(r,z)} \psi+\frac{n-4}{r}\partial_r \psi  + \mathcal G(\phi,\psi,\la,\xi)~~\mbox{ in }\mathcal D\times (0,T)
\end{equation*}
where
$\mathcal G(\phi,\psi,\la,\xi)$ is defined in \eqref{def-mG}.

In order to apply the linear theory Proposition \ref{outer-apriori}, we estimate all the terms in $\mathcal G(\phi,\psi,\la,\xi)$. Define
$$\mathcal G(\phi,\psi,\la,\xi)=g_1+g_2+g_3$$
with
$$
\begin{aligned}
&g_1:=3\la^{-2}(1-\eta_R)U^2(y)(\psi+Z^*+\Psi_0+\Psi_1),\\
&g_2:=\la^{-3}\left[(\Delta_y \eta_R) \phi+2\nabla_y\eta_R\cdot\nabla_y \phi-\la^2\phi\partial_t\eta_R \right],\\
&g_3:=(1-\eta_R)\mathcal K_1[\la,\xi]+\mathcal S_{\rm out}[\la,\xi]-\mathcal S_{\rm out}[\la_*,\xi_*]+(1-\eta_R)\mathcal N(\mathtt P+\Psi_0+\Psi_1).\\
\end{aligned}
$$
To estimate $g_1$, we need to estimate the corrections $\Psi_0$ and $\Psi_1$ defined in \eqref{def-Psi0} and \eqref{def-Psi1}. This is done in Appendix \ref{App-est-Psi_0}.

\medskip

\noindent {\bf Estimate of $g_1$.}

\medskip

Since we shall solve $\psi$ in the function space $X_{\psi}$ defined in \eqref{def-fcnspaces}, we get
\begin{align*}
g_1=&~3\la^{-2}(1-\eta_R)U^2(y)(\psi+Z^*+\Psi_0+\Psi_1)\\
\lesssim&~\frac{R^{-2}(t)\la_*^{\nu}(0)R^{2-\alpha}(0)|\log T|}{|(r,z)-\xi(t)|^2}\chi_{\{|(r,z)-\xi(t)|\geq\la_*R\}}\|\psi\|_*\\
&~+\frac{R^{-2}(t)}{|(r,z)-\xi(t)|^2}\chi_{\{|(r,z)-\xi(t)|\geq\la_*R\}}\|Z^*\|_{\infty}+\frac{R^{-2}(t)|\log(T-t)|}{|(r,z)-\xi(t)|^2}\chi_{\{|(r,z)-\xi(t)|\geq\la_*R\}}\|\la\|_{\infty}.
\end{align*}
So by the choice of the weight $\varrho_2$ as in \eqref{def-weights}, we have
\begin{equation}\label{g1**}
\|g_1\|_{**}\lesssim T^{\epsilon_0}(\|\psi\|_*+\|Z^*\|_{\infty}+\|\la\|_{\infty}+1)
\end{equation}
provided
$$ 
\nu+\frac{\alpha}{2}-\nu_2>0,\quad
1-\nu_2>0.
$$ 
Here $\epsilon_0$ is a small positive number, and we have used 
$$
R(t)=(T-t)^{-1/2}|\log(T-t)|^{\theta},\quad \theta\in \left(\frac{n-1}{n-2},~\frac{n}{n-2}\right).
$$

\medskip

\noindent {\bf Estimate of $g_2$.}

\medskip

Thanks to the cut-off, $g_2$ is supported in
$$\left\{(r,z,t)\in\mathcal D\times(0,T):~\la_*R\leq|(r,z)-\xi(t)|\leq 2\la_*R,~t\in(0,T)\right\},$$
and we have
\begin{equation*}
\begin{aligned}
&\quad g_2= \la^{-3}\left[(\Delta_y \eta_R) \phi+2\nabla_y\eta_R\cdot\nabla_y \phi-\la^2(\partial_t\eta_R)\phi\right]+\frac{(n-4)\la^{-1}}{r}\,\phi\partial_{r}\eta_R\\
&\lesssim \la_*^{\nu-3}R^{-2-a}\log R\left(1+r-\sqrt{2(n-4)(T-t)}+\dot\la_* \la_* R^2\right)\chi_{\{|(r,z)-\xi(t)|\sim\la_*R\}}\\
&\qquad \times\left(\|\phi^0\|_{0,\sigma,\nu,a}+\|\phi^1\|_{{\rm in},\nu_1,a_1}+\|\phi^{\perp}\|_{{\rm in},\nu,a}\right)\\
&\lesssim (R^{\alpha-a}\log R+\la_*^{1/2}R^{1+\alpha-a} \log R)\varrho_1\left(\|\phi^0\|_{0,\sigma,\nu,a}+\|\phi^1\|_{{\rm in},\nu_1,a_1}+\|\phi^{\perp}\|_{{\rm in},\nu,a}\right).
\end{aligned}
\end{equation*}
So it follows that
$$ 
\left\|g_2\right\|_{**}\lesssim T^{\epsilon_0}\left(\|\phi^0\|_{0,\sigma,\nu,a}+\|\phi^1\|_{{\rm in},\nu_1,a_1}+\|\phi^{\perp}\|_{{\rm in},\nu,a}\right)
$$ 
provided
$$ 
0<\alpha<a<1.\\
$$ 
Here $\epsilon_0$ is a small positive number.

\medskip

\noindent {\bf Estimate of $g_3$.}

\medskip

We want to estimate $g_3=(1-\eta_R)\mathcal K_1[\la,\xi]+\mathcal S_{\rm out}[\la,\xi]-\mathcal S_{\rm out}[\la_*,\xi_*]+(1-\eta_R)\mathcal N(\mathtt P+\Psi_0+\Psi_1)$. Recall the definition of $\mathcal K_1[\la,\xi]$ in \eqref{def-mK1}. Due to the cut-off $(1-\eta_R)\eta_*$, 
\begin{equation*}
\begin{aligned}
|(1-\eta_R)\mathcal K_1[\la,\xi]|
\lesssim \la^{-2}R^{-4}|\dot\la| \lesssim |\log(T-t)|^{\frac{n}{n-2}-4\theta} \lesssim |\log T|^{-\epsilon_0}\|\la\|_{\infty} \varrho_3,
\end{aligned}
\end{equation*}
then one has
$$ 
\|(1-\eta_R)\mathcal K_1[\la,\xi]\|_{**}\lesssim |\log T|^{-\epsilon_0}
$$ 
for some $\epsilon_0>0$.

Finally, we compute the nonlinear terms
\begin{align*}
(1-\eta_R)\mathcal N(\mathtt P+\Psi_0+\Psi_1) 
\lesssim&~\frac{\la^{-1}}{1+|y|^2}\left[(\la^{-1}\eta_R\phi)^2+\Psi_0^2+\psi^2+(Z^*)^2+\Psi_1^2\right](1-\eta_R)\\
\lesssim&~\la_*^{\nu}R^{2\sigma(4-a)+\alpha-8}(\log R)^2\|\phi^0\|^2_{0,\sigma,\nu,a}\varrho_1+\la_*^{2\nu_1-\nu}R^{\alpha-2a_1}\|\phi^1\|^2_{{\rm in},\nu_1,a_1}\varrho_1\\
&~+\la_*^{\nu}R^{\alpha-2a}\|\phi^{\perp}\|^2_{{\rm in},\nu,a}\varrho_1+(|\log(T-t)|^2|\dot\la_*|^2+|\log T|^{-\frac{4}{n-2}})\la_*^{1-\nu_2}\varrho_2\\
&~+\la_*^{1-\nu_2}(t)\la_*^{2\nu}(0)R^{4-2\alpha}(0)|\log T|^2\varrho_2\|\psi\|_*^2+\la^{1-\nu_2}\varrho_2\|Z^*\|_{\infty}^2\\
&~+\la_*^{1-\nu_2}|\log T|^{-2\epsilon_0}\varrho_2,
\end{align*}
where we have used \eqref{est-Psi_0} and \eqref{est-Psi_1}. Therefore, we obtain that for $\epsilon_0>0$
$$
\begin{aligned}
&~\|(1-\eta_R)\mathcal N(\mathtt P+\Psi_0+\Psi_1)\|_{**}\\
\lesssim &~T^{\epsilon_0}\left(\|\phi^0\|_{0,\sigma,\nu,a}^2+\|\phi^1\|^2_{{\rm in},\nu_1,a_1}+\|\phi^{\perp}\|_{{\rm in},\nu,a}^2+\|\psi\|_*^2+\|Z^*\|_{\infty}^2+\|\la\|_{\infty}+1\right)
\end{aligned}
$$
provided
\begin{equation}\label{g3**3cond}
\begin{aligned}
&~\nu-\frac12\left(2\sigma(4-a)+\alpha-8\right)>0,\quad
2\nu_1-\nu-\frac12(\alpha-2a_1)>0,\\
&~\nu-\frac12(\alpha-2a)>0,\quad
\nu_2<1,\quad
2\nu-\nu_2+\alpha-1>0.
\end{aligned}
\end{equation}

Collecting \eqref{g1**}--\eqref{g3**3cond},  we conclude that for a fixed number $\epsilon_0>0$
\begin{equation}\label{outer-contraction}
\|\mathcal G\|_{**}\lesssim |\log T|^{-\epsilon_0}\left(\|\psi\|_*+\|Z^*\|_{\infty}+\|\phi^0\|_{0,\sigma,\nu,a}+\|\phi^1\|_{{\rm in},\nu_1,a_1}+\|\phi^{\perp}\|_{{\rm in},\nu,a}+\|\la\|_{\infty}+\|\xi\|_{G}+1\right)
\end{equation}
with the parameters $a,a_1,\alpha,\nu,\nu_1,\nu_2,\sigma$ chosen in the following range
$$ 
\begin{aligned}
&~\nu+\frac{\alpha}{2}-\nu_2>0,\quad
0<\alpha<a<1,\quad\nu-\frac12\left(2\sigma(4-a)+\alpha-8\right)>0,\\
&~2\nu_1-\nu-\frac12(\alpha-2a_1)>0,\quad
\nu-\frac12(\alpha-2a)>0,\quad
\nu_2<1,\quad
2\nu-\nu_2+\alpha-1>0.
\end{aligned}
$$

\medskip

\subsection{The inner problems: estimates of \texorpdfstring{$\mathcal H^0$}{H0}, \texorpdfstring{$\mathcal H^1$}{H1} and \texorpdfstring{$\mathcal H^{\perp}$}{Hperp}}\label{subsec-inner}

\medskip

Recall from \eqref{inner} that the inner problem is the following
\begin{equation*}
\begin{aligned}
\la^2 \phi_t = \Delta_y \phi +3U^2(y)\phi+\mathcal H(\phi,\psi,\la,\xi)~~\mbox{ in }\mathcal D_{2R}\times (0,T)
\end{aligned}
\end{equation*}
with $\mathcal H(\phi,\psi,\la,\xi)$ defined in \eqref{def-mH}. Since the inner--outer gluing relies on delicate analysis of the space-time decay of solutions, we further decompose the inner problem \eqref{inner} into three different spherical harmonic modes
\begin{equation*}
\begin{cases}
\la^2 \phi^0_t = \Delta_y \phi^0 +3U^2(y)\phi^0+\mathcal H^0(\phi,\psi,\la,\xi)~~&\mbox{ in }\mathcal D_{2R}\times (0,T)\\
\phi^0(\cdot,0)=0~~&\mbox{ in }\mathcal D_{2R}\\
\end{cases}
\end{equation*}

\begin{equation*}
\begin{cases}
\la^2 \phi^1_t = \Delta_y \phi^1 +3U^2(y)\phi^1+\mathcal H^1(\phi,\psi,\la,\xi)~~&\mbox{ in }\mathcal D_{2R}\times (0,T)\\
\phi^1(\cdot,0)=0~~&\mbox{ in }\mathcal D_{2R}\\
\end{cases}
\end{equation*}

\begin{equation*}
\begin{cases}
\la^2 \phi^{\perp}_t = \Delta_y \phi^{\perp} +3U^2(y)\phi^{\perp}+\mathcal H^{\perp}(\phi,\psi,\la,\xi)~~&\mbox{ in }\mathcal D_{2R}\times (0,T)\\
\phi^{\perp}(\cdot,0)=0~~&\mbox{ in }\mathcal D_{2R}\\
\end{cases}
\end{equation*}
with
\begin{equation}\label{def-mHm0}
\mathcal H^0(\phi,\psi,\la,\xi)=\int_{\mathbb{S}^3} \mathcal H(\phi,\psi,\la,\xi) \Theta_0(\theta) d\theta,
\end{equation}
$$ 
\mathcal H^1(\phi,\psi,\la,\xi)=\sum\limits_{j=1}^4\left(\int_{\mathbb{S}^3} \mathcal H(\phi,\psi,\la,\xi) \Theta_j(\theta) d\theta\right) \Theta_j,
$$ 
and
\begin{equation}\label{def-mHmh}
\mathcal H^{\perp}(\phi,\psi,\la,\xi)=\sum\limits_{j\geq 5}\left(\int_{\mathbb{S}^3} \mathcal H(\phi,\psi,\la,\xi) \Theta_j(\theta) d\theta\right) \Theta_j,
\end{equation}
where $\Theta_j~(j=0,1,\cdots)$ are spherical harmonics. From the linear theory in Section \ref{sec-linearinner}, we know that for $\mathcal H=\mathcal H^0+\mathcal H^1+\mathcal H^{\perp}$ satisfying
$$\|\mathcal H^0\|_{\nu,2+a},~\|\mathcal H^1\|_{\nu_1,2+a_1},~\|\mathcal H^{\perp}\|_{\nu,2+a}<+\infty,$$
there exists a solution $(\phi^0,\phi^1,\phi^{\perp},c^0,c^{\ell})$ ($\ell=1,\cdots,4$) solving the projected inner problems
\begin{equation}\label{eqn-mode000}
\begin{cases}
\la^2 \phi^0_t = \Delta_y \phi^0 +3U^2(y)\phi^0+\mathcal H^0(\phi,\psi,\la,\xi)+c^0 Z_5~~&\mbox{ in }\mathcal D_{2R}\times (0,T)\\
\phi^0(\cdot,0)=0~~&\mbox{ in }\mathcal D_{2R}\\
\end{cases}
\end{equation}

$$ 
\begin{cases}
\la^2 \phi^1_t = \Delta_y \phi^1 +3U^2(y)\phi^1+\mathcal H^1(\phi,\psi,\la,\xi)+\sum\limits_{\ell=1}^4 c^{\ell}Z_{\ell}~~&\mbox{ in }\mathcal D_{2R}\times (0,T)\\
\phi^1(\cdot,0)=0~~&\mbox{ in }\mathcal D_{2R}\\
\end{cases}
$$ 

\begin{equation}\label{eqn-modehhh}
\begin{cases}
\la^2 \phi^{\perp}_t = \Delta_y \phi^{\perp} +3U^2(y)\phi^{\perp}+\mathcal H^{\perp}(\phi,\psi,\la,\xi)~~&\mbox{ in }\mathcal D_{2R}\times (0,T)\\
\phi^{\perp}(\cdot,0)=0~~&\mbox{ in }\mathcal D_{2R}\\
\end{cases}
\end{equation}
and the inner solution $\phi[\mathcal H]=\phi^0[\mathcal H^0]+\phi^1[\mathcal H^1]+\phi^{\perp}[\mathcal H^{\perp}]$ with proper space-time decay can be found ensuring the inner--outer gluing to be carried out. First, we choose all the parameters such that
$$\|\mathcal H^0\|_{\nu,2+a},~\|\mathcal H^1\|_{\nu_1,2+a_1},~\|\mathcal H^{\perp}\|_{\nu,2+a}<+\infty.$$
To this end, we first give some estimates for $\mathcal H$.

\noindent $\bullet$  
 By \eqref{est-Psi_0}, we have
\begin{equation}\label{est-mH1}
\begin{aligned}
&\quad \left|3\la U^{2}(y)[\Psi_0(\la y+\xi,t)+\psi(\la y+\xi,t)+Z^*(\la y+\xi,t)]\right|\\
&\lesssim \frac{\la_*(t)}{1+|y|^4}\left[|\dot\la_*|\left(\log{\la_*}+\log(1+|y|)\right)+|\log T|^{-\frac{2}{n-2}}+\la_*^{\nu}(0)R^{2-\alpha}(0)|\log T|\|\psi\|_{*}+\|Z^*\|_{\infty}\right].
\end{aligned}
\end{equation}

\medskip

\noindent $\bullet$
We also have
\begin{align*}
&\quad\left|\la\left[\dot\la (\nabla_y\phi\cdot y+\phi)+ \nabla_y \phi\cdot\dot\xi\right]+\frac{(n-4)\la^2}{r}\phi\partial_r \eta_R\right|\\
&\lesssim \left(\la_*|\dot\la_*|+\frac{\la_*}{R\sqrt{T-t}}\right)\Bigg(\frac{\la_*^{\nu}R^{\sigma(4-a)}\log R}{1+|y|^{4}}\|\phi^0\|_{0,\sigma,\nu,a}+\frac{\la_*^{\nu_1}}{1+|y|^{a_1}}\|\phi^1\|_{{\rm in},\nu_1,a_1}\\
&~\qquad\qquad\qquad+\frac{\la_*^{\nu}}{1+|y|^{a}}\|\phi^{\perp}\|_{{\rm in},\nu,a}\Bigg) + \la_*\dot\xi \frac{\la_*^{\nu_1}}{1+|y|^{1+a_1}}\|\phi^1\|_{{\rm in},\nu_1,a_1}.
\end{align*}

\medskip

\noindent $\bullet$ Using \eqref{est-Psi_0} and \eqref{est-Psi_1}, we evaluate
\begin{align}\notag
&\quad \left|\la^3\mathcal N(\mathtt P+\Psi_0+\Psi_1)+\la^3 \mathcal K[\la,\xi]+3\la U^{2}(y)\Psi_1\right|\\ \notag
&\lesssim \frac{\la_*^{2\nu}R^{2\sigma(4-a)}(\log R)^2}{1+|y|^{10}}\|\phi^0\|^2_{0,\sigma,\nu,a}+\frac{\la_*^{2\nu_1}}{1+|y|^{2+2a_1}}\|\phi^1\|^2_{{\rm in},\nu_1,a_1}+\frac{\la_*^{2\nu}}{1+|y|^{2+2a}}\|\phi^{\perp}\|^2_{{\rm in},\nu,a}\\ \notag
&\quad+\frac{\la_*^2(t)\la_*^{2\nu}(0)R^{4-2\alpha}(0)|\log T|^2}{1+|y|^2}\|\psi\|_*^2+\frac{\la_*^2}{1+|y|^2}\Big(|\dot\la_*|^2|\log(T-t)|^2+|\log T|^{-\frac{4}{n-2}}\Big)\\ \notag
&\quad+\frac{\la_*^2}{1+|y|^2}\|Z^*\|^2_{\infty}+\frac{\la_*^{2}|\log T|^{-2\epsilon_0}}{1+|y|^2}+\frac{\la_*\dot\la_*}{(1+|y|^2)^2}+\frac{\la_*|\dot\xi|}{1+|y|^3}\\ \label{est-mH3}
&\quad+\frac{\la_*}{\sqrt{T-t}}\frac{y_1}{1+|y|^4}+\frac{\la_*|\log T|^{-\epsilon_0}}{1+|y|^4}.
\end{align}

\medskip

\noindent $\bullet$ In the spherical coordinates, the projection of
$$\la^{-2}(t)\nabla U(y)\cdot\dot\xi(t)+\frac{n-4}{\la(t)y_1+\xi_r(t)}\la^{-2}(t)\partial_{y_1}U(y)$$
on mode $0$ is given by
\begin{equation}\label{proj-mode0}
\begin{aligned}
&\quad\int_{\mathbb S^3}\frac{n-4}{\la(t)y_1+\xi_r(t)}\la^{-2}(t)\partial_{y_1}U(y) \Theta_0 d\theta\\
&=C\frac{\la^{-2}}{(1+|y|^2)^2}\int_0^{\pi} \frac{|y|\cos\theta\sin^2\theta}{\la|y|\cos\theta+\xi_r} d\theta=-C\frac{\la^{-2}}{(1+|y|^2)^2}\frac{\la|y|^2\pi}{2(\xi_r+\sqrt{\xi_r^2-(\la|y|)^2})^2},
\end{aligned}
\end{equation}
where $C$ is a constant. Note that since our choice of $\xi_r$ is $$\xi_r\sim\sqrt{2(n-4)(T-t)},$$
the above projection on mode $0$ behaves exactly like the first error $\mathcal E_0$ defined in \eqref{def-mE0}, and direct computations show that the sum of  these two terms does not vanish. So we can deal with \eqref{proj-mode0} by slightly modifying the first correction $\Psi_0$. Here we omit the details.

\medskip

\noindent $\bullet$ Similarly, the projection of
$$\la^{-2}(t)\nabla U(y)\cdot\dot\xi(t)+\frac{n-4}{\la(t)y_1+\xi_r(t)}\la^{-2}(t)\partial_{y_1}U(y)$$
on mode $1$ can be computed as
\begin{align*}
&\quad\int_{\mathbb S^3}\la^{-2}(t)\partial_{y_1}U(y)\left(\frac{n-4}{\la(t)y_1+\xi_r(t)}+\dot\xi_r\right) \Theta_1(\theta) d\theta\\
&=-\int_{\mathbb S^3}\la^{-2}(t)\partial_{y_1}U(y)\frac{(n-4)\la(t) y_1}{\xi_r(t)[\la(t)y_1+\xi_r(t)]} \Theta_1(\theta) d\theta\\
&=C'\frac{\la^{-1}(t)|y|^2}{\xi_r(t)(1+|y|^2)^2}\int_0^{\pi} \frac{\cos^3\theta\sin^2\theta}{\la|y|\cos\theta+\xi_r} d\theta\\
&=C'\frac{\la^{-1}(t)|y|^2}{\xi_r(t)(1+|y|^2)^2}\frac{\left((\la|y|)^4+4(\la|y|)^2\xi_r^2-8\xi_r^4+8\xi_r^3\sqrt{\xi_r^2-(\la|y|)^2}\right)\pi}{8(\la|y|)^5},
\end{align*}
where $C'$ is a constant and we have used that $\dot\xi_r\sim-\frac{n-4}{\xi_r}$. Note that in $\mathcal D_{2R}$, namely $|y|\leq 2R$, we have $\la|y|\ll\xi_r$ for $T$ sufficiently small. Therefore, by directly expanding the above expression, we obtain
\begin{equation}\label{proj-mode1}
\int_{\mathbb S^3}\la^{-2}(t)\partial_{y_1}U(y)\left(\frac{n-4}{\la(t)y_1+\xi_r(t)}+\dot\xi_r\right) \Theta_1(\theta) d\theta\lesssim \frac{1}{\xi_r^3(1+|y|)}.
\end{equation}

Then we estimate $\mathcal H$ in three different modes.

\medskip

\noindent {\bf Estimate of $\mathcal H^0$.}

\medskip

By \eqref{est-mH1}--\eqref{proj-mode0}, we obtain
\begin{align*}
\|\mathcal H^0\|_{\nu,2+a}\lesssim &~ \la_*^{1-\nu}(|\dot\la_*||\log(T-t)|+|\log T|^{-\frac2{n-2}})+\la_*^{1-\nu}(t)\la_*^{\nu}(0)R^{2-\alpha}(0)|\log T|\|\psi\|_*\\
&~+\la_*^{1-\nu}\|Z^*\|_{\infty}+\la_*^{\frac12}R^{\sigma(4-a)}\log R\|\phi^0\|_{0,\sigma,\nu,a}+\la_*|\dot\la_*|R^{\sigma(4-a)}\log R\|\phi^0\|_{0,\sigma,\nu,a}\\
&~+\la_*^{\nu}R^{2\sigma(4-a)}(\log R)^2\|\phi^0\|^2_{0,\sigma,\nu,a}+\la_*^{2-\nu}(t)R^a(t)\la_*^{2\nu}(0)R^{4-2\alpha}(0)|\log T|^2\|\psi\|^2_*\\
&~+\la_*^{2-\nu}R^a(|\dot\la_*|^2 |\log(T-t)|^2+|\log T|^{-\frac4{n-2}})+\la_*^{2-\nu}R^a\|Z^*\|_{\infty}^2+\la_*^{2-\nu}R^a|\log T|^{-2\epsilon_0}\\
&~+\la_*^{1-\nu}|\dot\la_*|+\la_*^{1-\nu}|\log T|^{-\epsilon_0},
\end{align*}
from which we conclude that
\begin{equation}\label{est-mHm0}
\|\mathcal H^0\|_{\nu,2+a} \lesssim T^{\epsilon_0}\left(\|\phi^0\|_{0,\sigma,\nu,a}+\|\psi\|_*+\|Z^*\|_{\infty}+\|\la\|_{\infty}+\|\xi\|_G+1\right)
\end{equation}
provided
$$ \begin{aligned}
&~\nu<1,\quad
\alpha>0,\quad
\frac12-\frac12\sigma(4-a)>0,\quad \nu-\sigma(4-a)>0,\\
&~2-\nu-\frac12 a>0,\quad
\nu+\alpha-\frac{a}{2}>0.
\end{aligned}
$$ 

\medskip

\noindent {\bf Estimate of $\mathcal H^1$.}

\medskip

From \eqref{est-mH1}--\eqref{est-mH3} and \eqref{proj-mode1}, we have
\begin{align*}
\|\mathcal H^1\|_{\nu_1,2+a_1}\lesssim &~ \la_*^{1-\nu_1}(|\log(T-t)||\dot\la_*|+|\log T|^{-\frac{2}{n-2}})+\la_*^{1-\nu_1}(t)\la_*^{\nu}(0)R^{2-\alpha}(0)|\log T|\|\psi\|_*\\
&~+\la_*^{1-\nu_1}\|Z^*\|_{\infty}+\la_*|\dot\la_*| R^2\|\phi^1\|_{{\rm in},\nu_1,a_1}+\frac{\la_*}{\sqrt{T-t}}R\|\phi^1\|_{{\rm in},\nu_1,a_1}\\
&~+\frac{\la_*^{2-\nu_1}}{\sqrt{T-t}}|\dot\la_*|R^{1+a_1}+\la_*^{\nu_1}\|\phi^1\|_{{\rm in},\nu_1,a_1}^2+\la_*^{2-\nu_1}R^{a_1}(|\dot\la_*|^2 |\log(T-t)|^2+|\log T|^{-\frac4{n-2}})\\
&~+\la_*^{2-\nu_1}(t)R^{a_1}(t)\la_*^{2\nu}(0)R^{4-2\alpha}(0)|\log T|^2\|\psi\|_*^2\\
&~+\la_*^{2-\nu_1} R^{a_1}\|Z^*\|_{\infty}^2+\la_*^{2-\nu_1}R^{a_1}|\log T|^{-2\epsilon_0}+\la_*^{1-\nu_1}|\log T|^{-\epsilon_0}.\\
\end{align*}
Therefore, we obtain that for some $\epsilon_0>0$
\begin{equation}\label{est-mHm1}
\|\mathcal H^1\|_{\nu_1,2+a_1}\lesssim T^{\epsilon_0}\left(\|\phi^1\|_{{\rm in},\nu_1,a_1}+\|\psi\|_*+\|Z^*\|_{\infty}+\|\la\|_{\infty}+\|\xi\|_G+1\right)
\end{equation}
provided
$$ 
\begin{aligned}
&~\nu_1<1,\quad
\nu-\nu_1+\frac{\alpha}{2}>0,\quad \theta<\frac{n}{n-2},\quad
\frac32-\nu_1-\frac12(1+a_1)>0,\\
&~\nu_1>0,\quad 2\nu-\nu_1+\alpha-\frac{a_1}{2}>0.
\end{aligned}
$$ 

\medskip

\noindent {\bf Estimate of $\mathcal H^{\perp}$.}

\medskip

Using \eqref{est-mH1}--\eqref{est-mH3}, we get
\begin{align*}
\|\mathcal H^{\perp}\|_{\nu,2+a}\lesssim &~  \la_*^{1-\nu}|\dot\la_*||\log(T-t)|+\la_*^{1-\nu}(t)\la_*^{\nu}(0)R^{2-\alpha}(0)|\log T|\|\psi\|_*\\
&~+\la_*^{1-\nu}\|Z^*\|_{\infty}+\la_* R^2|\dot\la_*|\|\phi^{\perp}\|_{{\rm in},\nu,a}+\la_*^{\nu}\|\phi^{\perp}\|^2_{{\rm in},\nu,a}+\la_*^{2-\nu}R^a(|\dot\la_*|^2 |\log(T-t)|^2+|\log T|^{-\frac4{n-2}})\\
&~+\la_*^{2-\nu}(t)R^a(t)\la_*^{2\nu}(0)R^{4-2\alpha}(0)|\log T|^2\|\psi\|^2_*\\
&~+\la_*^{2-\nu}R^a\|Z^*\|_{\infty}^2+\la_*^{2-\nu}R^a|\log T|^{-2\epsilon_0}+\la_*^{1-\nu}|\log T|^{-\epsilon_0}.
\end{align*}
Thus, one has
\begin{equation}\label{est-mHmp}
\|\mathcal H^{\perp}\|_{\nu,2+a}\lesssim T^{\epsilon_0}\left(\|\phi^{\perp}\|_{{\rm in},\nu,a}+\|\psi\|_*+\|Z^*\|_{\infty}+\|\la\|_{\infty}+\|\xi\|_G+1\right)
\end{equation}
provided
\begin{equation}\label{cond-mHmp}
\begin{aligned}
0<\nu<1,\quad
\alpha>0,\quad
\theta<\frac{n}{n-2},\quad
2-\nu-\frac12a>0,\quad
\nu+\alpha-\frac{a}{2}>0.
\end{aligned}
\end{equation}

Collecting \eqref{est-mHm0}--\eqref{cond-mHmp}, we conclude that for some $\epsilon_0>0$
$$ 
\begin{aligned}
&~\|\mathcal H^0\|_{\nu,2+a}+\|\mathcal H^1\|_{\nu_1,2+a_1}+\|\mathcal H^{\perp}\|_{\nu,2+a}\\
\lesssim&~ T^{\epsilon_0}\bigg(\|\phi^0\|_{0,\sigma,\nu,a}+\|\phi^1\|_{{\rm in},\nu_1,a_1}+\|\phi^{\perp}\|_{{\rm in},\nu,a}+\|\psi\|_*+\|\la\|_{\infty}+\|\xi\|_{G}+\|Z^*\|_{\infty}+1\bigg)
\end{aligned}
$$ 
provided the parameters $\theta,~a,~a_1,~\alpha,~\nu,~\nu_1,~\sigma$ satisfy
\begin{align*}
&~\nu<1,\quad
\alpha>0,\quad
\frac12-\frac12\sigma(4-a)>0,\quad
\nu-\sigma(4-a)>0,\quad 2-\nu-\frac12 a>0,\\
&~\nu+\alpha-\frac{a}{2}>0,\quad
0<\nu_1<1,\quad
\nu-\nu_1+\frac{\alpha}{2}>0,\quad
\theta<\frac{n}{n-2},\\
&~
\frac32-\nu_1-\frac12(1+a_1)>0,\quad 2\nu-\nu_1+\alpha-\frac{a_1}{2}>0.
\end{align*}

\medskip

\subsection{The parameter problems}

\medskip

From \eqref{eqn-mode000}--\eqref{eqn-modehhh}, we need to adjust the parameter functions $\la(t)$, $\xi(t)$ such that
$$c^0[\la,\xi,\Psi^*]=0,~~c^{\ell}[\la,\xi,\Psi^*]=0,~~\ell=1,\cdots,4,$$
where
\begin{equation}\label{def-cmode0}
c^0[\la,\xi,\Psi^*]=-\frac{\int_{\mathcal D_{2R_*}}\mathcal H^0 Z_5 dy}{\int_{\mathcal D_{2R_*}}|Z_5|^2 dy}-\mathcal O[\mathcal H^0],
\end{equation}
\begin{equation}\label{def-cmode1}
c^{\ell}[\la,\xi,\Psi^*]=-\frac{\int_{\mathcal D_{2R}}\mathcal H^1 Z_{\ell} dy}{\int_{\mathcal D_{2R}}|Z_{\ell}|^2 dy}~\mbox{ for }~\ell=1,\cdots,4.
\end{equation}
It turns out that we can easily achieve at the translation mode \eqref{def-cmode1}, but the scaling mode \eqref{def-cmode0} is more complicated.

\medskip

\subsubsection{The reduced problem of \texorpdfstring{$\xi(t)$}{xi(t)}}

\medskip

We first consider the reduced equation for $\xi(t)=(\xi_r(t),\xi_z(t)).$ Notice that \eqref{def-cmode1} is equivalent to
\begin{equation*}
\int_{\mathcal D_{2R}} \mathcal H^1(\phi,\psi,\la,\xi)(y,t) Z_i(y) dy=0,~\mbox{ for all }t\in (0,T),~i=1,\cdots,4.
\end{equation*}
Recall that
$$\xi_r(t)=\sqrt{2(n-4)(T-t)}+\xi_{r,1}(t),~~\xi_z(t)=z_0+\xi_{z,1}(t)$$
and write $\Psi^*=\psi+Z^*.$ Then for $i=1,\cdots,4,$
$$\int_{\mathcal D_{2R}} \mathcal H^1(\phi,\psi,\la,\xi)(y,t) Z_i(y) dy=0$$
yield that
\begin{equation}\label{eqn-xi}
\left\{
\begin{aligned}
&\dot\xi_r+\frac{n-4}{\xi_r}=b_r[\la,\xi,\phi,\Psi^*]\\
&\dot\xi_{z_j}=b_{z_j}[\la,\xi,\phi,\Psi^*]\\
\end{aligned}
\right.
\end{equation}
where
\begin{equation*}
b_r[\la,\xi,\phi,\Psi^*]=\int_{\mathcal D_{2R}} \mathcal H_r[\la,\xi,\phi,\Psi^*](y,t)Z_1(y) dy
\end{equation*}
\begin{equation*}
b_{z_j}[\la,\xi,\phi,\Psi^*]=\int_{\mathcal D_{2R}} \mathcal H_{z_j}[\la,\xi,\phi,\Psi^*](y,t)Z_j(y) dy~\mbox{ for }~ j=2,3,4
\end{equation*}
with
\begin{equation*}
\mathcal H_r[\la,\xi,\phi,\Psi^*](y,t)=\left[\int_{\mathbb S^3} \left(\mathcal H-\la U_{y_1}\left(\dot\xi_r+\frac{n-4}{\la y_1+\xi_r}\right)\right)\Theta_1(\theta)d\theta\right]\Theta_1
\end{equation*}
and
\begin{equation*}
\mathcal H_{z_j}[\la,\xi,\phi,\Psi^*]=\sum\limits_{j=2}^4\left[\int_{\mathbb S^3} \left(\mathcal H-\la U_{y_j}\dot\xi_{z_j}\right)\Theta_j(\theta)d\theta\right]\Theta_j.
\end{equation*}
Here $\Theta_j$ $(j=1,\cdots,4)$ are the eigenfunctions corresponding to the second eigenvalue of $-\Delta_{\mathbb S^3}.$ Now we want to evaluate the sizes of $b_r[\la,\xi,\phi,\Psi^*]$ and $b_{z_j}[\la,\xi,\phi,\Psi^*]$. By direct computations, we get
\begin{align}\notag
|b_r[\la,\xi,\phi,\Psi^*]|\lesssim&~\left(\la_*|\dot\la_*|(|\log(T-t)|+|\log T|^{-\frac2{n-2}})+\la_*\|Z^*\|_{\infty}\right)(1+O(R^{-3}))\\ \notag
&~+\la_*(t)\la_*^{\nu}(0)R^{2-\alpha}(0)|\log T|\|\psi\|_*(1+O(R^{-3}))\\ \notag
&~+\la_*^{\frac12+\nu_1}\|\xi\|_G\|\phi^1\|_{{\rm in},\nu_1,a_1}(1+O(R^{-a_1}))+\la_*^{1+\nu_1}|\dot\la_*|\|\phi^1\|_{{\rm in},\nu_1,a_1}(1+O(R^{1-a_1}))\\ \notag
&~+\la_*^{\frac32}|\dot\la_*|R(1+O(R^{-1}))+\la_*^{2\nu_1}\|\phi^1\|^2_{{\rm in},\nu_1,a_1}(1+O(R^{-2a_1-1}))\\ \notag
&~+\la_*^{2}(t)\la_*^{2\nu}(0)R^{4-2\alpha}(0)|\log T|^2\|\psi\|_*^2(1+O(R^{-1}))\\ \notag
&~+\la_*^2\|Z^*\|^2_{\infty}(1+O(R^{-1}))+\la_*^2(|\dot\la_*|^2|\log(T-t)|^2+|\log T|^{-\frac4{n-2}})(1+O(R^{-1}))\\ \notag
&~+\la_*^{2}|\log T|^{-2\epsilon_0} (1+O(R^{-1}))+\la_*|\dot\la_*|(1+O(R^{-3}))\\ \notag
&~+\la_*^2|\dot\la_*|(1+O(R^{-1}))+\la_*^3|\dot\la_*|R+\la_*^{\frac52}|\dot\la_*|R^2\|\xi\|_G\\ \label{est-br}
&~+\la_*^{\frac32}|\dot\la_*|R\|\xi\|_G(1+O(R^{-1}))+\la_*|\log T|^{-\epsilon_0}(1+O(R^{-3}))
\end{align}
and
\begin{align} \notag
|b_{z_j}[\la,\xi,\phi,\Psi^*]|\lesssim&~\left(\la_*(|\dot\la_*||\log(T-t)|+|\log T|^{-\frac2{n-2}})+\la_*\|Z^*\|_{\infty}\right)(1+O(R^{-3}))\\ \notag
&~+\la_*(t)\la_*^{\nu}(0)R^{2-\alpha}(0)|\log T|\|\psi\|_*(1+O(R^{-3}))\\ \notag
&~+\la_*^{1+\nu_1}|\dot\la_*|\|\phi^1\|_{{\rm in},\nu_1,a_1}(1+O(R^{1-a_1}))+\la_*^{1+\nu_1}|\dot\xi_{z_j}|\|\phi^1\|_{{\rm in},\nu_1,a_1}(1+O(R^{-a_1}))\\ \notag
&~+\la_*^{\frac32}|\dot\la_*|R(1+O(R^{-1}))+\la_*^{2\nu_1}\|\phi^1\|^2_{{\rm in},\nu_1,a_1}(1+O(R^{-2a_1-1}))\\ \notag
&~+\la_*^{2}(t)\la_*^{2\nu}(0)R^{4-2\alpha}(0)|\log T|^2\|\psi\|_*^2(1+O(R^{-1}))\\ \notag
&~+\la_*^2\|Z^*\|^2_{\infty}(1+O(R^{-1}))+\la_*^2(|\dot\la_*|^2|\log(T-t)|^2+|\log T|^{-\frac{4}{n-2}})(1+O(R^{-1}))\\ \notag
&~+\la_*^{2}|\log T|^{-2\epsilon_0}  (1+O(R^{-1}))+\la_*|\dot\la_*|(1+O(R^{-3}))\\ \notag
&~+\la_*^2|\dot\la_*|(1+O(R^{-1}))+\la_*^3|\dot\la_*|R+\la_*^{3+\upsilon}|\dot\la_*|R^2\|\xi\|_G\\ \label{est-bz}
&~+\la_*^{2+\upsilon}|\dot\la_*|R\|\xi\|_G(1+O(R^{-1}))+\la_*|\log T|^{-\epsilon_0}(1+O(R^{-3})).
\end{align}
Since $\xi_r(t)=\sqrt{2(n-4)(T-t)}+\xi_{r,1}(t),$
problem \eqref{eqn-xi} becomes
\begin{equation}\label{eqn-xi''}
\left\{
\begin{aligned}
&\dot\xi_{r,1}-\frac{(n-4)\xi_{r,1}}{\sqrt{2(n-4)(T-t)}\left(\sqrt{2(n-4)(T-t)}+\xi_{r,1}\right)}=b_r[\la,\xi,\phi,\Psi^*],\\
&\dot\xi_{z_j}=b_{z_j}[\la,\xi,\phi,\Psi^*].\\
\end{aligned}
\right.
\end{equation}
Then we analyze the reduced problem \eqref{eqn-xi''}, which defines operators $\Xi_r$ and $\Xi_{z_j}$ $(j=2,3,4)$ that return the solutions $\xi_{r,1}$ and $\xi_{z_j}$ respectively. Here we write
\begin{equation}\label{def-XiXi}
\Xi=(\Xi_r,\Xi_{z_2},\Xi_{z_3},\Xi_{z_4}).
\end{equation}
We shall solve $(\xi_{r,1},\xi_{z,1})$ under the norm
\begin{equation*}
\|\xi\|_G=\sup_{t\in(0,T)} \left[(T-t)^{-\frac12-\upsilon}|\xi_{r,1}(t)|+M_1(T-t)^{\frac12-\upsilon}|\dot\xi_{r,1}(t)|+|\xi_{z_j}(t)|+(T-t)^{-\upsilon}|\dot\xi_{z_j}(t)|\right]
\end{equation*}
for $\upsilon>0$ and $0<M_1<1$. From \eqref{eqn-xi''}, we have
\begin{equation*}
|\xi_{r,1}(t)|\leq \left(\frac{(T-t)^{\frac12+\upsilon}\|\xi\|_G}{2(T-t)}+\left\|b_r[\la,\xi,\phi,\Psi^*]\right\|_{L^{\infty}(0,T)}\right)(T-t)
\end{equation*}
and
\begin{equation*}
|\xi_{z_j}(t)|\leq |z_0|+\left\|b_{z_j}[\la,\xi,\phi,\Psi^*]\right\|_{L^{\infty}(0,T)}(T-t).
\end{equation*}
Therefore, we obtain
\begin{equation}\label{est-Xir''}
\|\Xi_r\|_G\leq \frac{1+M_1}{2}\|\xi\|_G+(1+M_1)(T-t)^{\frac12-\upsilon}\left\|b_r[\la,\xi,\phi,\Psi^*]\right\|_{L^{\infty}(0,T)}
\end{equation}
and
\begin{equation}\label{est-Xiz''}
\|\Xi_{z_j}\|_G\leq |z_0|+(T-t)^{-\upsilon}\left\|b_{z_j}[\la,\xi,\phi,\Psi^*]\right\|_{L^{\infty}(0,T)}.
\end{equation}
By \eqref{est-br}, \eqref{est-bz}, \eqref{est-Xir''} and \eqref{est-Xiz''}, we conclude that for some constant $C>0$
\begin{align}\notag
\|\Xi_r\|_G\leq&~\left[\frac{1+M_1}{2}+C(1+M_1)(T-t)^{\frac12-\upsilon}\left(\la_*^{\frac12+\nu_1}+\la_*^{\frac32}|\dot\la_*|R\right)\right]\|\xi\|_G\\ \notag
&~+C(1+M_1)(T-t)^{\frac12-\upsilon}\bigg[\la_*(t)\la_*^{\nu}(0)R^{2-\alpha}\|\psi\|_*+\la_*\|Z^*\|_{\infty}\\ \label{est-Xir}
&~+(\la_*^{\frac12+\nu_1}+\la_*^{2\nu_1})\|\phi^1\|_{{\rm in},\nu_1,a_1}+\la_*\|\la\|_{\infty}+\la_*\bigg]
\end{align}
and
\begin{equation}\label{est-Xiz}
\begin{aligned}
\|\Xi_{z_j}\|_G\leq&~ |z_0|+C(T-t)^{-\upsilon}\Bigg[\la_*(t)\la_*^{\nu}(0)R^{2-\alpha}\|\psi\|_*+\la_*\|Z^*\|_{\infty}\\
&~+(\la_*^{1+\nu_1}+\la_*^{2\nu_1})\|\phi^1\|_{{\rm in},\nu_1,a_1}+\la_*\|\la\|_{\infty}+\la_*+\left(\la_*^{1+\nu_1+\upsilon}+\la_*^{2+\upsilon}|\dot\la_*|R\right)\|\xi\|_G\Bigg].\\
\end{aligned}
\end{equation}

\medskip

\subsubsection{The reduced problem of \texorpdfstring{$\lambda(t)$}{lambda(t)}}\label{subsec-la}

\medskip

Since the reduced problem of $\la$ is essentially the same as that of \cite{17HMF}, we shall follow the strategy and logic in \cite[Section 8]{17HMF}.

From direct computations, we see that \eqref{def-cmode0}
gives a non-local integro-differential equation
\begin{equation}\label{indi-la}
\begin{aligned}
\int_{B_{4R}} 3 U^2(y_1,y') \Psi_0\big(\la y_1+c_n\sqrt{T-t},\la y',t\big)  Z_5(y_1,y')dy_1 dy'+{\bf c_0} \dot\la=a[\la,\xi,\Psi^*](t)+\mathtt a_r[\la,\xi,\phi,\Psi^*](t),
\end{aligned}
\end{equation}
where
\begin{equation}\label{def-aaa}
{\bf c_0}=2\alpha_0\int_{\R^4}\frac{Z_5(y)}{(1+|y|^2)^2}dy,\quad a[\la,\xi,\Psi^*]=-\int_{\mathcal D_{2R_*}} 3U^2(y)\left(\Psi_0+\Psi^*\right)Z_5(y) dy,
\end{equation}
and the remainder term $\mathtt a_r[\la,\xi,\phi,\Psi^*](t)$ turns out to be smaller order and has the following bound
\begin{align*}
\left|\mathtt a_r[\la,\xi,\phi,\Psi^*](t)\right|\lesssim&~\left[\la_*^{\nu}R^{\sigma(4-a)}(1+|\dot\la_*|)+\la_*^{\nu-\frac12}R^{\sigma(4-a)}\|\xi\|_G\right]\log R\|\phi^0\|_{0,\sigma,\nu,a}(1+O(R_*^{-2}))\\
&~+\la_*^{\frac12} R |\dot\la_*||\log(T-t)|+\la_*^{2\nu-1}R^{2\sigma(4-a)}(\log R)^2\|\phi^0\|_{0,\sigma,\nu,a}^2(1+O(R_*^{-8}))\\
&~+\la_*(t)\la_*^{2\nu}(0)R^{4-2\alpha}(0)|\log T|^2|\log(T-t)|\|\psi\|_*^2+\la_*|\dot\la_*|^2|\log(T-t)|^3\\
&~+\la_*|\log(T-t)|\|Z^*\|_{\infty}^2+\la_*|\log T|^{-2\epsilon_0}|\log(T-t)|+|\log T|^{-\epsilon_0},
\end{align*}
where we have used \eqref{est-Psi_1}. Here for technical reasons $\la(t)$ is assumed to be defined for negative $t$.
We first introduce the following norms
\begin{equation*}
\|f\|_{\Theta,l}:=\sup_{t\in[0,T]}\frac{|\log(T-t)|^l}{(T-t)^{\Theta}}|f(t)|,
\end{equation*}
where $f\in C([-T,T];\R)$ with $f(T)=0$, and $\Theta\in(0,1)$, $l\in\R$.
\begin{equation*}
[g]_{\gamma,m,l}:=\sup_{I_T} \frac{|\log(T-t)|^l}{(T-t)^m (t-s)^{\gamma}}|g(t)-g(s)|,
\end{equation*}
where $I_T=\left\{0\leq s\leq t\leq T: t-s\leq\frac{1}{10}(T-t)\right\},$ $g\in C([-T,T];\R)$ with $g(T)=0$ and $0<\gamma<1$, $m>0$, $l\in\R$. Also, we define
\begin{equation}\label{def-mB0}
\mathcal B_0[\la](t):=\int_{B_{4R}} 3 U^2(y_1,y') \Psi_0\big(\la y_1+c_n\sqrt{T-t},\la y',t\big)  Z_5(y_1,y')dy_1 dy'+{\bf c_0} \dot\la
\end{equation}
and write
\begin{equation}\label{def-c^0}
c^0[\mathcal H]=\frac{\mathcal B_0[\la]-(a[\la,\xi,\Psi^*]+\mathtt a_r[\la,\xi,\phi,\Psi^*])}{\int_{\mathcal D_{2R_*}}|Z_5(y)|^2dy}.
\end{equation}

A key proposition concerning the solvability of $\la$ is stated as follows.

\begin{prop}[\cite{17HMF}]\label{keyprop-la}
Let $\omega,\Theta\in(0,\frac12)$, $\gamma\in(0,1)$, $m\leq \Theta-\gamma$ and $l\in\R$. If $a(t)$ satisfies $a(T)<0$ with $1/C\leq a(T)\leq C$ for some constant $C>1$, and
\begin{equation}\label{assump-a}
T^{\Theta}|\log T|^{1+c-l}\|a(\cdot)-a(T)\|_{\Theta,l-1}+[a]_{\gamma,m,l-1}\leq C_1
\end{equation}
for some $c>0$, then there exist two operators $\mathcal P$ and $\mathcal R_0$ such that $\la=\mathcal P[a]:[-T,T]\rightarrow \R$ satisfies
\begin{equation}\label{eqn-mB0}
\mathcal B_0[\la](t)=a(t)+\mathcal R_0[a](t)
\end{equation}
with
\begin{equation*}
\begin{aligned}
|\mathcal R_0[a](t)|\lesssim \left(T^{\frac12+c}+T^{\Theta}\frac{\log|\log T|}{|\log T|}\|a(\cdot)-a(T)\|_{\Theta,l-1}+[a]_{\gamma,m,l-1}\right)\frac{(T-t)^{m+(1+\omega)\gamma}}{|\log(T-t)|^l}.
\end{aligned}
\end{equation*}
\end{prop}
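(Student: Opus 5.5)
We follow the scheme of \cite[Section 8]{17HMF}, adapted to the two-coefficient nonlocal operator. The first step is to replace $\mathcal B_0[\la]$ by its leading part. By \eqref{nonlocal-oc0},
\[
\mathcal B_0[\la](t)=C\left(\int_{-T}^{t-(T-t)}\frac{\dot\la(s)}{t-s}\,ds+c_n^*\int_{t-(T-t)}^{t-\la^2(t)}\frac{\dot\la(s)}{t-s}\,ds\right)+O(|\dot\la(t)|),
\]
with $\la$ extended to $[-T,T]$. Writing $\dot\la(s)=\dot\la(t)+(\dot\la(s)-\dot\la(t))$ in the second integral and using $\log\frac{T-t}{\la^2(t)}=|\log(T-t)|(1+o(1))$, the main operator becomes
\[
\gamma[\la](t)=\int_{-T}^{t}\frac{\dot\la(s)}{T-s}\,ds-c_n^*\dot\la(t)\log(T-t),
\]
plus a Hölder remainder of the form $\int_{t-(T-t)}^{t}\frac{\dot\la(s)-\dot\la(t)}{t-s}\,ds$. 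This remainder is controlled once $\dot\la$ is assumed to have Hölder seminorm $[\dot\la]_{\gamma,m,l}$ finite. The other mismatches, namely $\int \dot\la\big(\frac1{t-s}-\frac1{T-s}\big)$ over the far region and the $O(|\dot\la|)$ terms, are one logarithm smaller.

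The second step is to solve the model equation $C\gamma[\la]=a(t)$ exactly. Differentiating as in Section \ref{sec-laxi} gives
\[
\frac{d}{dt}\Big(\dot\la(t)|\log(T-t)|^{\frac{c_n^*+1}{c_n^*}}\Big)=\frac{|\log(T-t)|^{\frac1{c_n^*}}}{c_n^*C}\,\dot a(t).
\]
This is an explicit first-order ODE for $\dot\la$, which we integrate backward from $T$. The integration constant is fixed by evaluating $\gamma$ at $t=T$, that is, by $-c\int_{-T}^T\frac{ds}{(T-s)|\log(T-s)|^{n/(n-2)}}\propto a(T)$. Thus $\dot\la=\kappa\dot\la_*+\dot\la_1$, where $\kappa$ is determined by $a(T)$ and $\dot\la_1$ is driven by $a-a(T)$. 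Using $\|a-a(T)\|_{\Theta,l-1}$ and the weighted integral of $\dot a$ (estimated through $[a]_{\gamma,m,l-1}$), we would bound $\dot\la_1$ in a weighted norm $(T-t)^{\Theta}|\log(T-t)|^{-l+\dots}$, together with its Hölder seminorm. The gain $\log|\log T|/|\log T|$ comes from the ratio of consecutive log powers. This defines $\mathcal P[a]$.

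The third step is to set $\mathcal R_0[a]=\mathcal B_0[\mathcal P[a]]-a$ and estimate it. Three contributions enter. The Hölder remainder is bounded by $[\dot\la]_{\gamma}$ times $(T-t)^{m+\gamma}$, with an extra $(T-t)^{\omega\gamma}$ obtained by splitting at $t-(T-t)^{1+\omega}$. The lower-order pieces $O(|\dot\la|)$ and $\log(T-t)$ versus $\log\frac{T-t}{\la^2}$ each cost a factor $1/|\log(T-t)|$. The $T^{1/2+c}$ term comes from the initial segment $s\in[-T,0]$ and from $\Psi_0$ errors where $\xi$ differs from $\xi_*$.

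The main obstacle is this Hölder remainder: showing that $\dot\la$ built from $a$ inherits the right seminorm, so that the near-diagonal integral gains $(T-t)^{(1+\omega)\gamma}$ beyond $(T-t)^m$. Here we would transfer $[a]_{\gamma,m,l-1}$ through the explicit ODE formula, using the condition $m\le\Theta-\gamma$.
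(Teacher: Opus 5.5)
Your third step has a genuine gap: the pieces you move into $\mathcal R_0$ cannot satisfy the stated bound. That bound carries the factor $(T-t)^{m+(1+\omega)\gamma}$. Where the proposition is used ($m=\Theta-\gamma$) this is $(T-t)^{\Theta+\omega\gamma}$, so $\mathcal R_0$ must vanish at $t=T$ like a power strictly better than $a-a(T)$ does. The pieces you discard are only logarithmically small.

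Concretely, take $a\equiv a(T)$. Then $\|a-a(T)\|_{\Theta,l-1}=[a]_{\gamma,m,l-1}=0$, and the claim becomes $|\mathcal R_0[a](t)|\lesssim T^{\frac12+c}(T-t)^{m+(1+\omega)\gamma}|\log(T-t)|^{-l}$. Your $\mathcal P[a]$ gives $\dot\la=\kappa\dot\la_*$. Then $\mathcal B_0[\la]-a(T)=\mathcal B_0[\la]-C\gamma[\la]$ contains the term $Cc_n^*\dot\la(t)\big(\log\frac{T-t}{\la^2(t)}-|\log(T-t)|\big)=-2Cc_n^*\dot\la(t)\log\frac{\la(t)}{T-t}$. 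This is of exact order $|\dot\la(t)|\log|\log(T-t)|$ as $t\to T$. It also contains the $O(|\dot\la(t)|)$ terms of $\mathcal B_0^*$. Neither decays like any power of $T-t$. In your first step this is hidden in the $o(1)$ of $\log\frac{T-t}{\la^2}=|\log(T-t)|(1+o(1))$, which is only of size $\log|\log(T-t)|/|\log(T-t)|$.

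The piece $\int_{t-(T-t)}^{t-(T-t)^{1+\omega}}\frac{\dot\la(s)-\dot\la(t)}{t-s}\,ds$ left over by your splitting is also unaccounted for. There the H\"older seminorm gives at best $(T-t)^{m+\gamma}$, which falls short of $(T-t)^{m+(1+\omega)\gamma}$.

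What is missing is the point of the decomposition $\mathcal B_0=\mathcal B_0^*+\mathcal S_\omega+\mathcal R_\omega$ that the paper takes from \cite{17HMF}. The operator $\mathcal P$ must solve \emph{exactly} the modified equation $\mathcal B_0^*[\la]+\mathcal S_\omega[\dot\la]=a$. This equation retains:
\begin{itemize}
\item the $O(|\dot\la|)$ terms;
\item the coefficient $(1+\omega)\log(T-t)-2\log\la_*(t)$, with $\la_*$ in place of $\la$ so that the problem is linear;
\item the full integral $\int_{-T}^{t-(T-t)^{1+\omega}}\frac{\dot\la(s)}{t-s}\,ds$.
\end{itemize}
Only $\mathcal R_\omega[\dot\la]=-\int_{t-(T-t)^{1+\omega}}^{t-\la_*^2(t)}\frac{\dot\la(t)-\dot\la(s)}{t-s}\,ds$ is dropped. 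It satisfies $|\mathcal R_\omega[\dot\la]|\lesssim[\dot\la]_{\gamma,m,l}\frac{(T-t)^m}{|\log(T-t)|^l}\int_0^{(T-t)^{1+\omega}}\tau^{\gamma-1}\,d\tau$, which is exactly the power in the statement. So $\mathcal R_0$ is essentially this term alone.

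Your explicit solution of $C\gamma[\la]=a$ is the right approximate inverse, but it has to be used inside a contraction argument in the norms $\|\cdot\|_{\Theta,l}$ and $[\cdot]_{\gamma,m,l}$. The terms you discarded are treated there as a perturbation. Relative to the diagonal term $c_n^*\dot\la(t)|\log(T-t)|$ they have size $\lesssim\frac{\log|\log(T-t)|}{|\log(T-t)|}\lesssim\frac{\log|\log T|}{|\log T|}$. This is where the factor $\frac{\log|\log T|}{|\log T|}$ in the estimate naturally originates, rather than from a ratio of consecutive log powers in the explicit formula. The transfer of H\"older regularity from $a$ to $\dot\la$, which you identify as the main obstacle, must then be carried out for this full perturbative solution, not just for the explicit model.
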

The proof of Proposition \ref{keyprop-la} is in \cite{17HMF}.  The idea of the proof is to observe that
$$\mathcal B_0[\la]\approx C\left(\int_{-T}^{t-(T-t)} \frac{\dot\la(s)}{t-s}\,ds
    +c_n^*\int_{t-(T-t)}^{t-\la^2(t)} \frac{\dot\la(s)}{t-s}\,ds\right),$$
where $C>0$,
and we decompose
$$\mathcal B_0[\la]=\mathcal B_0^*[\la]+\mathcal S_{\omega}[\dot\la]+\mathcal R_{\omega}[\dot\la],$$
where
\begin{equation*}
\mathcal B_0^*[\la]:=\mathcal B_0[\la]-C\left(\int_{-T}^{t-(T-t)} \frac{\dot\la(s)}{t-s}\,ds
    +c_n^*\int_{t-(T-t)}^{t-\la^2(t)} \frac{\dot\la(s)}{t-s}\,ds\right),
\end{equation*}
\begin{equation*}
\mathcal S_{\omega}[\dot\la]:=\dot\la\left[(1+\omega)\log(T-t)-2\log\la_*(t)\right]+\int_{-T}^{t-(T-t)^{1+\omega}}\frac{\dot\la(s)}{t-s}ds
\end{equation*}
and
\begin{equation*}
\mathcal R_{\omega}[\dot\la]:=-\int_{t-(T-t)^{1+\omega}}^{t-\la_*^2(t)}\frac{\dot\la(t)-\dot\la(s)}{t-s} ds.
\end{equation*}
Here $\omega>0$ is a fixed number. We solve a modified equation where we drop $\mathcal R_{\omega}[\dot\la]$ in \eqref{eqn-mB0}, and thus the remainder $\mathcal R_0$ is essentially $\mathcal R_{\omega}[\dot\la]$ and $\mathtt a_r[\la,\xi,\phi,\Psi^*]$.

In another aspect, we modify problem \eqref{indi-la} replacing $a[\la,\xi,\Psi^*]$ by its main term. To this end, we define
$$a[\la,\xi,\Psi^*]=a^0[\la,\xi,\Psi^*]+a^1[\la,\xi,\Psi^*]+a^{\perp}[\la,\xi,\Psi^*]$$
with
$$a^0[\la,\xi,\Psi^*]=-\int_{\mathcal D_{2R_*}} \tilde L^0[\Psi] Z_5(y) dy,\quad a^1[\la,\xi,\Psi^*]=-\int_{\mathcal D_{2R_*}} \tilde L^1[\Psi] Z_5(y) dy,$$
and
$$a^{\perp}[\la,\xi,\Psi^*]=-\int_{\mathcal D_{2R_*}} \tilde L^{\perp}[\Psi] Z_5(y) dy,$$
where $\tilde L[\Psi]:=3U^2(\Psi_0+\Psi^*)$, $\tilde L^0[\Psi]$ is the projection of $\tilde L[\Psi]$ on mode $0$, $\tilde L^1[\Psi]$ is the projection of $\tilde L[\Psi]$ on modes $1$ to $4$, and $\tilde L^{\perp}[\Psi]$ is the projection of $\tilde L[\Psi]$ on higher modes $j\geq 5$.

We define
\begin{equation}\label{def-c_*^0}
\begin{aligned}
c_*^0[\la,\xi,\Psi^*](t):=&~\frac{\mathcal R_0\left[a^0[\la,\xi,\Psi^*]\right](t)+a^1[\la,\xi,\Psi^*](t)+a^{\perp}[\la,\xi,\Psi^*](t)}{\int_{\mathcal D_{2R_*}}|Z_5(y)|^2dy}\\
&~-\left(c^0[\mathcal H[\la,\xi,\Psi^*]]-\tilde c^0[\mathcal H^0[\la,\xi,\Psi^*]]\right),
\end{aligned}
\end{equation}
where $\mathcal R_0$ is the operator given in Proposition \ref{keyprop-la}, $c^0$ is defined in \eqref{def-c^0}, and $\tilde c^0$ is the operator given in Proposition \ref{propmode0}. The reason for choosing such $c_*^0$ is the following. By Proposition \ref{keyprop-la}, the equation we solve is
$$\mathcal B_0[\la](t)=a^0[\la,\xi,\Psi^*](t)+\mathcal R_0[a^0[\la,\xi,\Psi^*]]$$
which is equivalent to
$$c^0[\mathcal H]=\frac{\mathcal R_0\left[a^0[\la,\xi,\Psi^*]\right](t)+a^1[\la,\xi,\Psi^*](t)+a^{\perp}[\la,\xi,\Psi^*](t)}{\int_{\mathcal D_{2R_*}}|Z_5(y)|^2dy}.$$
We shall consider the following reduced equation
$$\tilde c^0[\mathcal H^0]=c_*^0[\la,\xi,\Psi^*],$$
from which we get \eqref{def-c_*^0}.

By \eqref{def-aaa}, Proposition \ref{keyprop-la} and Proposition \ref{outer-apriori}, it is natural to choose
\begin{equation*}
\Theta=\nu-1+\frac{\alpha}{2}
\end{equation*}
and
\begin{equation*}
m=\nu-2-\gamma+\frac12(2+\alpha).
\end{equation*}
In order for $\|a(\cdot)-a(T)\|_{\Theta,l-1}$ and $[a]_{\gamma,m,l-1}$ to be finite, we require
$$l<\max\{1+2\Theta,1+2m\}$$
and it then follows that
$$\|a(\cdot)-a(T)\|_{\Theta,l-1}\lesssim |\log T|^{l-\Theta-1},\quad [a]_{\gamma,m,l-1}\lesssim |\log T|^{l-m-1}.$$
Another assumption $m\leq\Theta-\gamma$ in Proposition \ref{keyprop-la} is also valid.
Finally, in order to make the remainder $\mathcal R_0[a]$ small, we impose
\begin{equation*}
m+(1+\omega)\gamma>\Theta,
\end{equation*}
which implies that
\begin{equation*}
\omega\gamma>0.
\end{equation*}

\medskip


\subsection{Inner--outer gluing system}

By the discussions in Section \ref{subsec-la}, we transform the inner--outer problems \eqref{inner}, \eqref{outer} into the problems of finding solutions $(\psi,\phi^0,\phi^1,\phi^{\perp},\la,\xi)$ solving the following {\em inner--outer gluing system}
\begin{equation}\label{eqn-outer}
\begin{cases}
\psi_t=\Delta_{(r,z)} \psi+\frac{n-4}{r}\partial_r \psi +\mathcal G(\phi^0+\phi^1+\phi^{\perp},\psi+Z^*,\la,\xi),~&\mbox{ in }\mathcal D\times (0,T)\\
\psi=-\Psi^0,~&\mbox{ on }(\partial \mathcal D\backslash \{r=0\})\times (0,T)\\
\psi_r=0,~&\mbox{ on } (\mathcal D\cap\{r=0\})\times (0,T)\\
\psi(r,z,0)=0,~&\mbox{ in }\mathcal D\\
\end{cases}
\end{equation}
\begin{equation}\label{eqn-projm0}
\begin{cases}
\la^2 \phi^0_t = \Delta_y \phi^0 +3U^2(y)\phi^0+\mathcal H^0(\phi,\psi,\la,\xi)+\tilde c^0[\mathcal H^0] Z_5~~&\mbox{ in }\mathcal D_{2R}\times (0,T)\\
\phi^0(\cdot,0)=0~~&\mbox{ in }\mathcal D_{2R}\\
\end{cases}
\end{equation}

\begin{equation}\label{eqn-projm1}
\begin{cases}
\la^2 \phi^1_t = \Delta_y \phi^1 +3U^2(y)\phi^1+\mathcal H^1(\phi,\psi,\la,\xi)+\sum\limits_{\ell=1}^4 c^{\ell}[\mathcal H^1]Z_{\ell}~~&\mbox{ in }\mathcal D_{2R}\times (0,T)\\
\phi^1(\cdot,0)=0~~&\mbox{ in }\mathcal D_{2R}\\
\end{cases}
\end{equation}

\begin{equation}\label{eqn-projmh}
\begin{cases}
\la^2 \phi^{\perp}_t = \Delta_y \phi^{\perp} +3U^2(y)\phi^{\perp}+\mathcal H^{\perp}(\phi,\psi,\la,\xi)+c^0_*[\la,\xi,\Psi^*]Z_5~~&\mbox{ in }\mathcal D_{2R}\times (0,T)\\
\phi^{\perp}(\cdot,0)=0~~&\mbox{ in }\mathcal D_{2R}\\
\end{cases}
\end{equation} 
$$
\begin{aligned}
c^0[\mathcal H](t)-\tilde c^0[\la,\xi,\Psi^*](t)=0~\mbox{ for all }~t\in(0,T)\label{redu-la}\\
c^1[\mathcal H](t)=0~\mbox{ for all }~t\in(0,T)\label{redu-xi}
\nonumber \end{aligned} $$
where $\mathcal G$ is defined in \eqref{def-mG}, $\mathcal H^0$, $\mathcal H^1$, $\mathcal H^{\perp}$ are the projections on different modes defined in \eqref{def-mHm0}--\eqref{def-mHmh}.

It is direct to see that if $(\psi,\phi^0,\phi^1,\phi^{\perp},\la,\xi)$ satisfies the system \eqref{eqn-outer}--\eqref{redu-xi}, then
$$\Psi^*=\psi+Z^*,\qquad\phi=\phi^0+\phi^1+\phi^{\perp}$$
solve the inner--outer problems \eqref{inner}, \eqref{outer}, and thus the desired blow-up solution is found.

\subsection{The fixed point formulation}

The inner--outer gluing system \eqref{eqn-outer}--\eqref{redu-xi} can be formulated as a fixed point problem for operators we shall describe below.

We first define the following function spaces
\begin{align}\notag
&X_{\phi^0}:=\left\{\phi^0\in L^{\infty}(\mathcal D_{2R}\times (0,T)):~\nabla_y \phi^0\in L^{\infty}(\mathcal D_{2R}\times (0,T)),~\|\phi^0\|_{0,\sigma,\nu,a}<+\infty\right\},\\ \notag
&X_{\phi^1}:=\left\{\phi^1\in L^{\infty}(\mathcal D_{2R}\times (0,T)):~\nabla_y \phi^1\in L^{\infty}(\mathcal D_{2R}\times (0,T)),~\|\phi^1\|_{{\rm in},\nu_1,a_1}<+\infty\right\},\\ \notag
&X_{\phi^{\perp}}:=\left\{\phi^{\perp}\in L^{\infty}(\mathcal D_{2R}\times (0,T)):~\nabla_y \phi^{\perp}\in L^{\infty}(\mathcal D_{2R}\times (0,T)),~\|\phi^{\perp}\|_{{\rm in},\nu,a}<+\infty\right\},\\ \notag
&X_{\psi}:=\big\{\psi\in L^{\infty}(\mathcal D\times (0,T)):~\|\psi\|_{*}<+\infty,~\mbox{ $\psi$ is Lipschitz continuous with respect}\\ \label{def-fcnspaces}
&\qquad\qquad\mbox{ to $(r,z)$ in $\mathcal D\times (0,T)$}\big\}.
\end{align}

In order to introduce the space for the parameter function $\la(t)$, we recall from \eqref{def-mB0} that the integral operator $\mathcal B_0$ takes the following approximate form
$$\mathcal B_0[\la]=\int_{-T}^{t-\la_*^2(t)}\frac{\dot\la(s)}{t-s} ds+O(\|\dot\la\|_{\infty}).$$
Proposition \ref{keyprop-la} provides an approximate inverse operator $\mathcal P$ of the integral operator $\mathcal B_0$ such that for $a(t)$ satisfying \eqref{assump-a}, $\la:=\mathcal P[a]$ satisfies
$$\mathcal B_0[\la]=a+\mathcal R_0[a]~\mbox{ in }~[-T,T],$$
where $\mathcal R_0[a]$ is a small remainder. Also, the proof  in \cite{17HMF} gives the following decomposition
\begin{equation}\label{def-mP1}
\mathcal P[a]=\la_{0,\kappa}+\mathcal P_1[a]
\end{equation}
with
\begin{equation*}
\la_{0,\kappa}:=\kappa|\log T|\int_t^T \frac{1}{|\log(T-s)|^2}ds,~t\leq T
\end{equation*}
$\kappa=\kappa[a]\in\R$, and the function $\la_1=\mathcal P_1[a]$ satisfies
\begin{equation}\label{est-la1la1}
\|\la_1\|_{*,3-\iota}\lesssim |\log T|^{1-\iota}\log^2(|\log T|)
\end{equation}
for $0<\iota<1,$ where the $\|\cdot\|_{*,3-\iota}$-norm is defined as follows
\begin{equation*}
\|f\|_{*,k}:=\sup_{t\in[-T,T]}|\log(T-t)|^k |\dot f(t)|.
\end{equation*}
So we define
$$X_{\la}:=\{\la_1\in C^1([-T,T]):\la_1(T)=0,\|\la_1\|_{*,3-\iota}<\infty\}.$$
Here by $(\kappa,\la_1)$, we represent $\la$ in the form
$$\la=\la_{0,\kappa}+\la_1,$$
and from \cite{17HMF}, one can write the norm
\begin{equation}\label{def-normla}
\|\la\|_F=|\kappa|+\|\la_1\|_{*,3-\iota}.
\end{equation}

Recall that $\xi(t)=(\xi_r(t),\xi_z(t))$ with $\xi_r(t)=\sqrt{2(n-4)(T-t)}+\xi_{r,1}(t)$, $\xi_z(t)=z_0+\xi_{z,1}(t)$ and write $\xi(t)=\xi_*(t)+\xi_1(t)$. We define the following space for $(\xi_{r,1},\xi_{z,1})$
\begin{equation*}
X_{\xi}=\left\{\xi\in C^1((0,T);\R^4),~\dot\xi(T)=0,~\|\xi\|_{G}<+\infty\right\}
\end{equation*}
with
\begin{equation}\label{def-normxi}
\|\xi\|_G:=\sup_{t\in(0,T)} \left[(T-t)^{-\frac12-\upsilon}|\xi_{r,1}(t)|+M_1(T-t)^{\frac12-\upsilon}|\dot\xi_{r,1}(t)|+|\xi_{z_j}(t)|+(T-t)^{-\upsilon}|\dot\xi_{z_j}(t)|\right]
\end{equation}
for some $0<\upsilon<1$.

Define
\begin{equation}\label{def-mX}
\mathcal X=X_{\phi^0}\times X_{\phi^1}\times X_{\phi^{\perp}}\times X_{\psi}\times\R\times X_{\la}\times X_{\xi}.
\end{equation}
We shall solve the inner--outer gluing system in a closed ball $\mathcal B$ in which $(\phi^0,\phi^1,\phi^{\perp},\psi,\kappa,\la_1,\xi_1)\in\mathcal X$ satisfies
\begin{equation}\label{def-closedball}
\left\{
\begin{aligned}
&\|\phi^0\|_{0,\sigma,\nu,a}+\|\phi^1\|_{{\rm in},\nu_1,a_1}+\|\phi^{\perp}\|_{{\rm in},\nu,a}\leq 1,\qquad \|\psi\|_*\leq 1,\\
&|\kappa-\kappa_0|\leq |\log T|^{-1/2},\quad \|\la_1\|_{*,3-\iota}\leq C|\log T|^{1-\iota}\log^2(|\log T|),\quad\|\xi\|_G\leq 1
\end{aligned}
\right.
\end{equation}
for some large and fixed constant $C$, where $\kappa_0=Z^*_0(0)$.

The inner--outer gluing system \eqref{eqn-outer}--\eqref{redu-xi} can be formulated as a fixed point problem, where we define an operator $\mathcal F$ which returns the solution from $\mathcal B$ to $\mathcal X$
\begin{equation*}
\begin{aligned}
\mathcal F: \mathcal B\subset \mathcal X~\rightarrow& ~\mathcal X\\
v~\mapsto&~ \mathcal F(v)=(\mathcal F_{\phi^0}(v),\mathcal F_{\phi^1}(v),\mathcal F_{\phi^{\perp}}(v),\mathcal F_{\psi}(v),\mathcal F_{\kappa}(v),\mathcal F_{\la_1}(v),\mathcal F_{\xi}(v))
\end{aligned}
\end{equation*}
with
\begin{equation}\label{def-operators}
\begin{aligned}
\mathcal F_{\phi^0}(\phi^0,\phi^1,\phi^{\perp},\psi,\kappa,\la_1,\xi_1)=&~\mathcal T_0(\mathcal H^0[\la,\xi,\Psi^*]),\\
\mathcal F_{\phi^1}(\phi^0,\phi^1,\phi^{\perp},\psi,\kappa,\la_1,\xi_1)=&~\mathcal T_1(\mathcal H^1[\la,\xi,\Psi^*]),\\
\mathcal F_{\phi^{\perp}}(\phi^0,\phi^1,\phi^{\perp},\psi,\kappa,\la_1,\xi_1)=&~\mathcal T_{\perp}\left(\mathcal H^{\perp}[\la,\xi,\Psi^*]+c^0_*[\la,\xi,\Psi^*]Z_5\right),\\
\mathcal F_{\psi}(\phi^0,\phi^1,\phi^{\perp},\psi,\kappa,\la_1,\xi_1)=&~\mathcal T_{\psi}\left(\mathcal G(\phi^0+\phi^1+\phi^{\perp},\Psi^*,\la,\xi)\right),\\
\mathcal F_{\kappa}(\phi^0,\phi^1,\phi^{\perp},\psi,\kappa,\la_1,\xi_1)=&~\kappa\left[a^0[\la,\xi,\Psi^*]\right],\\
\mathcal F_{\la_1}(\phi^0,\phi^1,\phi^{\perp},\psi,\kappa,\la_1,\xi_1)=&~\mathcal P_1\left[a^0[\la,\xi,\Psi^*]\right],\\
\mathcal F_{\xi}(\phi^0,\phi^1,\phi^{\perp},\psi,\kappa,\la_1,\xi_1)=&~\Xi(\phi^0,\phi^1,\phi^{\perp},\psi,\la,\xi).\\
\end{aligned}
\end{equation}
Here $\mathcal T_0$, $\mathcal T_1$ and $\mathcal T_{\perp}$ are the operators given from Proposition \ref{lineartheory} which solve different modes of the inner problems \eqref{eqn-projm0}-\eqref{eqn-projm1}-\eqref{eqn-projmh}. The operator $\mathcal T_{\psi}$ defined by Proposition \ref{outer-apriori} deals with the outer problem \eqref{eqn-outer}. Operators $\kappa[a]$, $\mathcal P_1$ and $\Xi$ handle the equations for $\la$ and $\xi$ which are defined in Proposition \ref{keyprop-la}, \eqref{def-mP1} and \eqref{def-XiXi}.

\medskip


\medskip

\subsection{Choice of constants}\label{subsec-choices}

We now collect the restrictions on the parameters
\[
\theta,\quad \alpha,\quad a,\quad a_1,\quad
\nu,\quad \nu_1,\quad \nu_2,\quad \sigma
\]
needed to close the fixed point argument. We choose
\[
\theta\in\left(\frac{n-1}{n-2},\frac n{n-2}\right),
\qquad
0<\alpha<a<1,
\qquad
0<\sigma\ll1.
\]
The outer estimates require
\[
\nu+\frac{\alpha}{2}-\nu_2>0,
\qquad
\nu_2<1,
\qquad
2\nu-\nu_2+\alpha-1>0,
\qquad
\nu-1+\frac{\alpha}{2}>0,
\]
and, as explained in Remark \ref{remark6.1},
\[
\nu_2>\nu-1+\frac{\alpha}{2}.
\]
Thus we first take
\[
1-\frac{\alpha}{2}<\nu<1,
\]
and then choose
\[
\nu-1+\frac{\alpha}{2}
<
\nu_2
<
\min\left\{
1,\,
\nu+\frac{\alpha}{2},\,
2\nu+\alpha-1
\right\}.
\]
This interval is nonempty precisely because
\(\nu>1-\alpha/2\).

The inner estimates require, in addition,
\[
0<\nu_1<1,
\qquad
\nu-\nu_1+\frac{\alpha}{2}>0,
\qquad
\frac32-\nu_1-\frac12(1+a_1)>0,
\qquad
2\nu-\nu_1+\alpha-\frac{a_1}{2}>0,
\]
together with
\[
\frac12-\frac12\sigma(4-a)>0,
\qquad
\nu-\sigma(4-a)>0.
\]
We choose \(a_1>1\) sufficiently close to \(1\), and then take
\[
0<\nu_1<
\min\left\{
1,\,
\nu+\frac{\alpha}{2},\,
1-\frac{a_1}{2},\,
2\nu+\alpha-\frac{a_1}{2}
\right\}.
\]
The interval is nonempty for such a choice of \(a_1\). Finally,
\(\sigma>0\) is chosen sufficiently small so that the last two
inequalities hold.

With these choices, all the restrictions from the outer estimates, the
inner estimates, and the reduced equation for \(\lambda(t)\) are
satisfied. 

\medskip


\subsection{Proof of Theorem \texorpdfstring{\ref{thm}}{1.1}}

\medskip

Consider the operator
\begin{equation}\label{def-mF}
\mathcal F=(\mathcal F_{\phi^0},\mathcal F_{\phi^1},\mathcal F_{\phi^{\perp}},\mathcal F_{\psi},\mathcal F_{\kappa},\mathcal F_{\la_1},\mathcal F_{\xi})
\end{equation}
given in \eqref{def-operators}. To prove Theorem \ref{thm}, our strategy is to show that the operator $\mathcal F$ has a fixed point in $\mathcal B$ by the Schauder fixed point theorem. Here the closed ball $\mathcal B$ is defined in \eqref{def-closedball}. By collecting the estimates \eqref{outer-contraction}, \eqref{est-mHm0}, \eqref{est-mHm1}, \eqref{est-mHmp}, \eqref{est-Xir}, \eqref{est-Xiz}, \eqref{est-la1la1}, and using Proposition \ref{outer-apriori}, Proposition \ref{lineartheory}, Proposition \ref{keyprop-la}, we conclude that for $(\phi^0,\phi^1,\phi^{\perp},\psi,\kappa,\la_1,\xi_1)\in\mathcal B$
\begin{equation}\label{contraction-mF}
\left\{
\begin{aligned}
&\|\mathcal F_{\phi^0}(\phi^0,\phi^1,\phi^{\perp},\psi,\kappa,\la_1,\xi_1)\|_{0,\sigma,\nu,a}\leq CT^{\epsilon}\\
&\|\mathcal F_{\phi^1}(\phi^0,\phi^1,\phi^{\perp},\psi,\kappa,\la_1,\xi_1)\|_{{\rm in},\nu_1,a_1}\leq CT^{\epsilon}\\
&\|\mathcal F_{\phi^{\perp}}(\phi^0,\phi^1,\phi^{\perp},\psi,\kappa,\la_1,\xi_1)\|_{{\rm in},\nu,a}\leq CT^{\epsilon}\\
&\|\mathcal F_{\psi}(\phi^0,\phi^1,\phi^{\perp},\psi,\kappa,\la_1,\xi_1)\|_*\leq CT^{\epsilon}\\
&\left|\mathcal F_{\kappa}(\phi^0,\phi^1,\phi^{\perp},\psi,\kappa,\la_1,\xi_1)-\kappa_0\right|\leq C|\log T|^{-1}\\
&\|\mathcal F_{\la_1}(\phi^0,\phi^1,\phi^{\perp},\psi,\kappa,\la_1,\xi_1)\|_{*,3-\iota}\leq C|\log T|^{1-\iota}\log^2(|\log T|)\\
&\|\mathcal F_{\xi}(\phi^0,\phi^1,\phi^{\perp},\psi,\kappa,\la_1,\xi_1)\|_G\leq CT^{\epsilon}\\
\end{aligned}
\right.
\end{equation}
where $C>0$ is a constant independent of $T$, and $\epsilon>0$ is a small fixed number. On the other hand, compactness of the operator $\mathcal F$ defined in \eqref{def-mF} can be proved by suitable variants of \eqref{contraction-mF}. Indeed, if we vary the parameters $\theta,\alpha,a,a_1,\nu,\nu_1,\nu_2,\sigma$ slightly such that all the restrictions in Section \ref{subsec-choices} are satisfied, then we get \eqref{contraction-mF} with the norms in the left hand side defined by the new parameters, while the closed ball $\mathcal B$ remains the same. To be more specific, for fixed $\sigma',\nu',a'$ which are close to $\sigma,\nu,a$, one can show that if $(\phi^0,\phi^1,\phi^{\perp},\psi,\kappa,\la_1,\xi_1)\in\mathcal B$, then
$$\|\mathcal F_{\phi^0}(\phi^0,\phi^1,\phi^{\perp},\psi,\kappa,\la_1,\xi_1)\|_{0,\sigma',\nu',a'}\leq CT^{\epsilon'}.$$
Moreover, one can show that for $\nu'>\nu$ and $\nu'-\frac{\sigma'}{2}(4-a')>\nu-\frac{\sigma}{2}(4-a)$, one has a compact embedding in the sense that if a sequence $\{\phi^0_n\}$ is bounded in the $\|\cdot\|_{0,\sigma',\nu',a'}$-norm, then there exists a subsequence which converges in the $\|\cdot\|_{0,\sigma,\nu,a}$-norm. Thus, the compactness follows directly from a standard diagonal argument by Arzel\`a--Ascoli's theorem. Arguing in a similar manner, one can prove the compactness of the rest operators. Therefore, the existence of the desired solution follows from the Schauder fixed point theorem. The proof is complete.\qed

\medskip


\medskip

\appendix
\section{Proofs of technical Lemmas}\label{Appendix-outer}

\medskip

\begin{proof}[Proof of Lemma \ref{lemma-rhs1}]
The proof is achieved by considering the following Cauchy problem in $\R^n$
\begin{equation}\label{outer-linear''''}
\begin{cases}
\partial_t\psi_0=\Delta \psi_0+f,~&\mbox{ in }\R^n\times(0,T)\\
\psi_0(x,0)=0,~&\mbox{ in }\R^n\\
\end{cases}
\end{equation}
If we decompose the solution to \eqref{outer-linear''} in the form
$$\psi=\psi_0+\psi_1,$$
then $\psi_1$ solves the homogeneous heat equation in $\Omega\times(0,T)$ with boundary condition $-\psi_1$. By standard parabolic estimates, it suffices to establish the estimates \eqref{outer-rhs1}--\eqref{outerholder-rhs1} for $\psi_0$. In the sequel, we denote $\psi$ by the solution to \eqref{outer-linear''''} given by Duhamel's formula
\begin{align*}
\psi(x,t)=&~\int_0^t \int_{\R^n} \frac{e^{-\frac{|x-w|^2}{4(t-s)}}}{(4\pi(t-s))^{n/2}} f(w,s) dw ds\\
\lesssim&~ \int_0^t \la_*^{\nu-3}(s) R^{-2-\alpha}(s) \int_{\left|(r,z)-\xi(s)\right|\leq 2\la_*(s)R(s)} \frac{e^{-\frac{|x-w|^2}{4(t-s)}}}{(4\pi(t-s))^{n/2}} dw ds,
\end{align*}
where $w=(w_1,\cdots,w_n)$, $r=\sqrt{w_1^2+\cdots+w_{n-3}^2}$ and  $z=(w_{n-2},w_{n-1},w_n)$. We decompose
\begin{align*}
&\quad \int_0^t \la_*^{\nu-3}(s) R^{-2-\alpha}(s) \int_{\left|(r,z)-\xi(s)\right|\leq 2\la_*(s)R(s)} \frac{e^{-\frac{|x-w|^2}{4(t-s)}}}{(4\pi(t-s))^{n/2}} dw ds\\
&=\left(\int_0^{t-(T-t)}+\int_{t-(T-t)}^{t-\la^{\delta_1}_*(t)}+\int_{t-\la^{\delta_1}_*(t)}^t\right) \la_*^{\nu-3}(s) R^{-2-\alpha}(s) \int_{\left|(r,z)-\xi(s)\right|\leq 2\la_*(s)R(s)} \frac{e^{-\frac{|x-w|^2}{4(t-s)}}}{(4\pi(t-s))^{n/2}} dw ds\\
&:=I_{11}+I_{12}+I_{13}
\end{align*}
for some $\delta_1\geq 1$ to be found. Here we recall that
$\xi(t)\sim(\sqrt{2(n-4)(T-t)},z_0),$
with $z_0=(0,0,0)$. Directly integrating, we obtain
\begin{align*}
I_{11}=&~ \int_0^{t-(T-t)}\la_*^{\nu-3}(s) R^{-2-\alpha}(s) \int_{\left|(r,z)-\xi(s)\right|\leq 2\la_*(s)R(s)} \frac{e^{-\frac{|x-w|^2}{4(t-s)}}}{(4\pi(t-s))^{n/2}} dw ds\\
\lesssim&~\int_0^{t-(T-t)}\la_*^{\nu-3}(s) R^{-2-\alpha}(s) \int_{\left|(\tilde r,\tilde z)-\frac{\xi(s)}{\sqrt{t-s}}\right|\leq \frac{2\la_*(s)R(s)}{\sqrt{t-s}}} e^{-\frac{|\tilde x-\tilde w|^2}{4}} d\tilde w ds\\
\lesssim&~ \int_0^{t-(T-t)} \la_*^{\nu-3}(s) R^{-2-\alpha}(s)\int_{\left|(\tilde r,\tilde z)-\frac{\xi(s)}{\sqrt{t-s}}\right|\leq \frac{2\la_*(s)R(s)}{\sqrt{t-s}}} (\tilde r)^{n-4} d\tilde r d\tilde z ds\\
\lesssim&~ \int_0^{t-(T-t)} \frac{\la_*^{\nu-3}(s) R^{-2-\alpha}(s)}{(T-s)^{n/2}}\left(\la_*(s)R(s)\right)^4 (T-s)^{\frac{n-4}{2}} ~ds
\lesssim \la_*^{\nu}(0)R^{2-\alpha}(0),
\end{align*}
where $\tilde x=x(t-s)^{-1/2}$, $\tilde w=w(t-s)^{-1/2}$, $\tilde r=r(t-s)^{-1/2}$, $\tilde z=z(t-s)^{-1/2}$, and we have used the fact that
$\sqrt{T-s}\gg \la_*(s)R(s)$
for $T\ll 1$ since $\la_*(s)R(s)\sim\sqrt{T-s}|\log(T-s)|^{\theta-2}$ with $\frac{n-1}{n-2}<\theta<\frac{n}{n-2}$. Then similarly we compute

\begin{equation}\label{est-I12}
\begin{aligned}
I_{12}\lesssim&~ \int_{t-(T-t)}^{t-\la^{\delta_1}_*(t)} \la_*^{\nu-3}(s) R^{-2-\alpha}(s)\int_{\left|(\tilde r,\tilde z)-\frac{\xi(s)}{\sqrt{t-s}}\right|\leq \frac{2\la_*(s)R(s)}{\sqrt{t-s}}} (\tilde r)^{n-4} d\tilde r d\tilde z ds\\
\lesssim&~\int_{t-(T-t)}^{t-\la^{\delta_1}_*(t)} \la_*^{\nu-3}(s) R^{-2-\alpha}(s)\frac{\la_*(s)R(s)(\sqrt{T-s})^{n-4}}{(t-s)^{(n-3)/2}}\frac{\left(\la_*(s)R(s)\right)^3}{(t-s)^{3/2}} ds\\
\lesssim&~ \la_*^{\nu+\frac{(n-2)(1-\delta_1)}{2}}(t)R^{2-\alpha}(t)|\log (T-t)|
\end{aligned}
\end{equation}
and
\begin{equation}\label{est-I13}
\begin{aligned}
I_{13}\lesssim&~ \int_{t-\la^{\delta_1}_*(t)}^t \la_*^{\nu-3}(s) R^{-2-\alpha}(s) ds
\lesssim \la_*^{\nu-3+\delta_1}(t) R^{-2-\alpha}(t)|\log(T-t)|.
\end{aligned}
\end{equation}
We can choose $\delta_1=1$. Therefore, we get
$$I_{11}+I_{12}+I_{13}\lesssim \la_*^{\nu}(0) R^{2-\alpha}(0)|\log T|$$
as desired.

Similarly, to prove \eqref{outerT-rhs1}, we decompose
$$|\psi(x,t)-\psi(x,T)|\leq I_{21}+I_{22}+I_{23}$$
with
$$I_{21}=\int_0^{t-(T-t)} \int_{\R^n}|G(x-w,t-s)-G(x-w,T-s)||f(w,s)|dwds,$$
$$I_{22}=\int_{t-(T-t)}^t \int_{\R^n}|G(x-w,t-s)-G(x-w,T-s)||f(w,s)|dwds,$$
$$I_{23}=\int_t^T \int_{\R^n}|G(x-w,T-s)||f(w,s)|dwds,$$
where $G(x,t)$ is the heat kernel
\begin{equation}\label{def-heatkernel}
G(x,t)=\frac{e^{-\frac{|x|^2}{4t}}}{(4\pi t)^{n/2}}.
\end{equation}
For the first integral $I_{21}$, we have
$$I_{21}\leq (T-t)\int_0^1\int_0^{t-(T-t)}\int_{|(w_r,w_z)-\xi(s)|\leq 2\la_*(s)R(s)} |\partial_t G(x-w,t_v-s)|\la_*^{\nu-3}(s)R^{-2-\alpha}(s) dwdsdv,$$
where $(w_r,w_z)=\left(\sqrt{w_1^2+\cdots+w_{n-3}^2},w_{n-2},w_{n-1},w_n\right)$ and $t_v=vT+(1-v)t$. Changing variables
$$w_v=(w_{r,v},w_{z,v})=(w_r (t_v-s)^{-1/2},w_z (t_v-s)^{-1/2}),$$
we evaluate
\begin{equation*}
\begin{aligned}
&\quad \int_{|(w_r,w_z)-\xi(s)|\leq 2\la_*(s)R(s)} |\partial_t G(x-w,t_v-s)| dw\\
&\lesssim \int_{|(w_r,w_z)-\xi(s)|\leq 2\la_*(s)R(s)} e^{-\frac{|x-w|^2}{4(t_v-s)}}\left(\frac{|x-w|^2}{(t_v-s)^{\frac{n+4}{2}}}+\frac{1}{(t_v-s)^{\frac{n+2}{2}}}\right)dw\\
&=\int_{\left|(w_{r,v},w_{z,v})-\frac{\xi(s)}{\sqrt{t_v-s}}\right|\leq \frac{2 \la_*(s)R(s)}{\sqrt{t_v-s}}} e^{-\frac{|x_v-w_v|^2}{4}}\left(1+|x_v-w_v|^2\right)\frac{1}{t_v-s}dw_v
\end{aligned}
\end{equation*}
and thus
\begin{align*}
&\quad\int_0^{t-(T-t)}\int_{|(w_r,w_z)-\xi(s)|\leq 2\la_*(s)R(s)} |\partial_t G(x-w,t_v-s)|\la_*^{\nu-3}(s)R^{-2-\alpha}(s) dwds\\
&\lesssim \int_0^{t-(T-t)} \la_*^{\nu-3}(s)R^{-2-\alpha}(s) \frac{(T-s)^{\frac{n-4}{2}}(\la_*(s)R(s))^4}{(t_v-s)^{\frac{n+2}{2}}}ds\\
&\lesssim \int_0^{t-(T-t)} \la_*^{\nu+1}(s)R^{2-\alpha}(s) (T-s)^{-3} ds\lesssim \la_*^{\nu-1}(t)R^{2-\alpha}(t),
\end{align*}
from which we conclude that
\begin{equation}\label{est-I21}
I_{21}\lesssim \la_*^{\nu}(t)R^{2-\alpha}(t)|\log (T-t)|.
\end{equation}
For $I_{22}$, we have
\begin{equation*}
\begin{aligned}
I_{22}\leq&~ \int_{t-(T-t)}^t\int_{|(w_r,w_z)-\xi(s)|\leq 2\la_*(s)R(s)} |G(x-w,t-s)|\la_*^{\nu-3}(s)R^{-2-\alpha}(s)dwds\\
&~+ \int_{t-(T-t)}^t\int_{|(w_r,w_z)-\xi(s)|\leq 2\la_*(s)R(s)} |G(x-w,T-s)|\la_*^{\nu-3}(s)R^{-2-\alpha}(s)dwds.
\end{aligned}
\end{equation*}
The first integral above can be estimated as
\begin{equation*}
\begin{aligned}
&\quad \int_{t-(T-t)}^t \int_{|(w_r,w_z)-\xi(s)|\leq 2\la_*(s)R(s)} |G(x-w,t-s)|\la_*^{\nu-3}(s)R^{-2-\alpha}(s)dwds\\
&=\left(\int_{t-(T-t)}^{t-\la_*^{\delta_1}(t)}+\int_{t-\la_*^{\delta_1}(t)}^t\right) |G(x-w,t-s)|\la_*^{\nu-3}(s)R^{-2-\alpha}(s)dwds.\\
\end{aligned}
\end{equation*}
Notice that we already estimate the above integral in \eqref{est-I12} and \eqref{est-I13}. So with the choice $\delta_1=1$, one has
\begin{equation*}
\begin{aligned}
&\quad \int_{t-(T-t)}^t \int_{|(w_r,w_z)-\xi(s)|\leq 2\la_*(s)R(s)} |G(x-w,t-s)|\la_*^{\nu-3}(s)R^{-2-\alpha}(s)dwds \lesssim  \la_*^{\nu}(t) R^{2-\alpha}(t)|\log(T-t)|.
\end{aligned}
\end{equation*}
Similarly, it holds that
\begin{equation*}
\begin{aligned}
&\quad \int_{t-(T-t)}^t \int_{|(w_r,w_z)-\xi(s)|\leq 2\la_*(s)R(s)} |G(x-w,T-s)|\la_*^{\nu-3}(s)R^{-2-\alpha}(s)dwds \lesssim  \la_*^{\nu}(t) R^{2-\alpha}(t)|\log(T-t)|.
\end{aligned}
\end{equation*}
Therefore, we obtain
\begin{equation}\label{est-I22}
I_{22}\lesssim \la_*^{\nu}(t) R^{2-\alpha}(t)|\log(T-t)|.
\end{equation}
For $I_{23}$, using
$\tilde x=x(T-s)^{-1/2}$, $\tilde w=w(T-s)^{-1/2}$, and $(\tilde w_r,\tilde w_z)=(w_r (T-s)^{-1/2},w_z (T-s)^{-1/2}),$
one has
\begin{equation}\label{est-I23}
\begin{aligned}
I_{23}\lesssim&~ \int_t^T \int_{\left|(w_r,w_z)-\xi(s)\right|\leq 2\la_*(s)R(s)}\frac{e^{-\frac{|x-w|^2}{4(T-s)}}}{(T-s)^{n/2}} \la_*^{\nu-3}(s)R^{-2-\alpha}(s)dwds\\
\lesssim&~ \int_t^T \int_{\left|(\tilde w_r, \tilde w_z)-\frac{\xi(s)}{\sqrt{T-s}}\right|\leq \frac{2\la_*(s)R(s)}{\sqrt{T-s}}} e^{-\frac{|\tilde x-\tilde w|^2}{4}} \la_*^{\nu-3}(s)R^{-2-\alpha}(s) d\tilde w ds\\
\lesssim&~ \int_t^T  \la_*^{\nu-3}(s)R^{-2-\alpha}(s)\frac{(T-s)^{\frac{n-4}{2}}(\la_*(s)R(s))^4}{(T-s)^{n/2}} ds\\
\lesssim&~ \int_t^T \frac{|\log T|^{\nu+1}(T-s)^{\nu-2+\frac{\alpha}{2}}}{|\log (T-s)|^{2\nu+2-(2-\alpha)\theta}} ds
\lesssim \la_*^{\nu}(t)R^{2-\alpha}(t)
\end{aligned}
\end{equation}
provided $\nu-1+\frac{\alpha}2>0$.
Collecting \eqref{est-I21}, \eqref{est-I22} and \eqref{est-I23}, we conclude the validity of \eqref{outerT-rhs1}.

Then we prove the gradient estimate \eqref{outergradient-rhs1}. By the heat kernel, we get
\begin{equation*}
\begin{aligned}
|\nabla \psi(x,t)|\lesssim&~ \int_0^t \frac{\la_*^{\nu-3}(s)R^{-2-\alpha}(s)}{(t-s)^{\frac{n+2}{2}}}\int_{\left|(w_r,w_z)-\xi(s)\right|\leq2 \la_*(s)R(s)} e^{-\frac{|x-w|^2}{4(t-s)}}|x-w| dwds\\
\lesssim&~ \int_0^t \frac{\la_*^{\nu-3}(s)R^{-2-\alpha}(s)}{(t-s)^{1/2}}\int_{\left|(\tilde w_r,\tilde w_z)-\frac{\xi(s)}{\sqrt{t-s}}\right|\leq \frac{2 \la_*(s)R(s)}{\sqrt{t-s}}}  e^{-\frac{|\tilde w|^2}{4}}(1+|\tilde w|) d\tilde wds,\\
\end{aligned}
\end{equation*}
where $\tilde x=x(t-s)^{-1/2}$,
$(w_r,w_z)=\left(\sqrt{w_1^2+\cdots+w_{n-3}^2},w_{n-2},w_{n-1},w_n\right),$
and
$$(\tilde w_r,\tilde w_z)=\left(\sqrt{\tilde w_1^2+\cdots+\tilde w_{n-3}^2},\tilde w_{n-2},\tilde w_{n-1},\tilde w_n\right).$$
First, we compute
\begin{equation}\label{grad-rhs11}
\begin{aligned}
&\quad\int_0^{t-(T-t)} \frac{\la_*^{\nu-3}(s)R^{-2-\alpha}(s)}{(t-s)^{1/2}}\int_{\left|(\tilde w_r,\tilde w_z)-\frac{\xi(s)}{\sqrt{t-s}}\right|\leq \frac{2 \la_*(s)R(s)}{\sqrt{t-s}}} e^{-\frac{|\tilde w|^2}{4}}(1+|\tilde w|) d\tilde wds\\
&\lesssim \int_0^{t-(T-t)} \frac{\la_*^{\nu-3}(s)R^{-2-\alpha}(s)}{(t-s)^{1/2}}\frac{\la_*(s)R(s)(\sqrt{T-s})^{n-4}}{(t-s)^{(n-3)/2}}\frac{\left(\la_*(s)R(s)\right)^3}{(t-s)^{3/2}} ds\\
&\lesssim \int_0^{t-(T-t)} \frac{\la_*^{\nu+1}(s)R^{2-\alpha}(s)(T-s)^{\frac{n-4}{2}}}{(t-s)^{\frac{n+1}{2}}} ds\\
&\lesssim \int_0^{t-(T-t)} \la_*^{\nu+1}(s)R^{2-\alpha}(s)(T-s)^{-\frac{5}{2}} ds\lesssim  \la_*^{\nu-\frac12}(0)R^{2-\alpha}(0)|\log T|.
\end{aligned}
\end{equation}
Then we compute
\begin{align*}
&\quad\int_{t-(T-t)}^{t-\la_*^{\delta_2}(t)} \frac{\la_*^{\nu-3}(s)R^{-2-\alpha}(s)}{(t-s)^{1/2}}\int_{\left|(\tilde w_r,\tilde w_z)-\frac{\xi(s)}{\sqrt{t-s}}\right|\leq \frac{2 \la_*(s)R(s)}{\sqrt{t-s}}} e^{-\frac{|\tilde w|^2}{4}}(1+|\tilde w|) d\tilde wds\\
&\lesssim \int_{t-(T-t)}^{t-\la_*^{\delta_2}(t)} \frac{\la_*^{\nu+1}(s)R^{2-\alpha}(s)(T-s)^{\frac{n-4}{2}}}{(t-s)^{\frac{n+1}{2}}} ds \lesssim \la_*^{\nu+\frac{n-2}{2}+\frac{(1-n)\delta_2}{2}}(t)R^{2-\alpha}(t) |\log(T-t)|,
\end{align*}
where $\delta_2\geq 1$ is a constant to be determined. On the other hand, we have
\begin{equation}\label{grad-rhs13}
\begin{aligned}
&\quad\int_{t-\la_*^{\delta_2}(t)}^t \frac{\la_*^{\nu-3}(s)R^{-2-\alpha}(s)}{(t-s)^{\frac{n+2}{2}}}\int_{\left|(w_r,w_z)-\xi(s)\right|\leq2 \la_*(s)R(s)} e^{-\frac{|x-w|^2}{4(t-s)}}|x-w| dwds\\
&\lesssim \int_{t-\la_*^{\delta_2}(t)}^t \frac{\la_*^{\nu-3}(s)R^{-2-\alpha}(s)}{(t-s)^{1/2}} ds \lesssim \la_*^{\nu-3+\frac{\delta_2}{2}}(t)R^{-2-\alpha}(t).
\end{aligned}
\end{equation}
By choosing $\delta_2=1$ and combining \eqref{grad-rhs11}--\eqref{grad-rhs13}, we prove the validity of the gradient estimate \eqref{outergradient-rhs1}. The proof of \eqref{outergradientT-rhs1} is similar to that of \eqref{outerT-rhs1}. We omit the details.

To prove the H\"older estimate \eqref{outerholder-rhs1}, we decompose
$$|\psi(x,t_2)-\psi(x,t_1)|\leq J_{11}+J_{12}+J_{13}$$
with
$$J_{11}=\int_0^{t_1-(t_2-t_1)}\int_{\R^n} |G(x-w,t_2-s)-G(x-w,t_1-s)|f(w,s)dwds,$$
$$J_{12}=\int_{t_1-(t_2-t_1)}^{t_1}\int_{\R^n} |G(x-w,t_2-s)-G(x-w,t_1-s)|f(w,s)dwds,$$
and
$$J_{13}=\int_{t_1}^{t_2}\int_{\R^n} G(x-w,t_2-s)f(w,s)dwds,$$
where $G(x,t)$ is the heat kernel \eqref{def-heatkernel}. Here we assume that $0<t_1<t_2<T$ with $t_2<2t_1$.
For $J_{11}$, by letting $t_v=vt_2+(1-v)t_1$, we have
\begin{align*}
J_{11}\leq &~(t_2-t_1)\int_0^1 \int_0^{t_1-(t_2-t_1)}\int_{\R^n} |\partial_t G(x-w,t_v-s)|f(w,s)dwdsdv\\
\lesssim &~ (t_2-t_1)\int_0^1 \int_0^{t_1-(t_2-t_1)}\int_{\left|(w_r,w_z)-\xi(s)\right|\leq2 \la_*(s)R(s)} e^{-\frac{|x-w|^2}{4(t_v-s)}}\bigg(\frac{|x-w|^2}{(t_v-s)^{\frac{n+4}{2}}}\\
&\qquad\qquad\qquad\qquad\qquad\qquad+\frac{1}{(t_v-s)^{\frac{n+2}{2}}}\bigg) \la_*^{\nu-3}(s)R^{-2-\alpha}(s) dwdsdv,\\
\end{align*}
where $(w_r,w_z)=\left(\sqrt{w_1^2+\cdots+w_{n-3}^2},w_{n-2},w_{n-1},w_n\right).$ Taking
$x_v=x(t_v-s)^{-1/2}$, $w_v=w(t_v-s)^{-1/2}$, and $(w_{r,v},w_{z,v})=(w_r (t_v-s)^{-1/2},w_z (t_v-s)^{-1/2}),$ we get
\begin{equation*}
\begin{aligned}
&~\int_{\left|(w_r,w_z)-\xi(s)\right|\leq2 \la_*(s)R(s)} e^{-\frac{|x-w|^2}{4(t_v-s)}}\left(\frac{|x-w|^2}{(t_v-s)^{\frac{n+4}{2}}}+\frac{1}{(t_v-s)^{\frac{n+2}{2}}}\right)\la_*^{\nu-3}(s)R^{-2-\alpha}(s) dw\\
=&~\int_{\left|(w_{r,v},w_{z,v})-\frac{\xi(s)}{\sqrt{t_v-s}}\right|\leq \frac{2 \la_*(s)R(s)}{\sqrt{t_v-s}}} e^{-\frac{|x_v-w_v|^2}{4}}\left(1+|x_v-w_v|^2\right)\frac{\la_*^{\nu-3}(s)R^{-2-\alpha}(s)}{t_v-s}dw_v.\\
\end{aligned}
\end{equation*}
Observing that for any $\mu\in(0,1)$, we have
$$\int_{\left|(w_{r,v},w_{z,v})-\frac{\xi(s)}{\sqrt{t_v-s}}\right|\leq \frac{2 \la_*(s)R(s)}{\sqrt{t_v-s}}} e^{-\frac{|x_v-w_v|^2}{4}}\left(1+|x_v-w_v|^2\right) dw_v \lesssim \left(\frac{\sqrt{T-s}}{\sqrt{t_v-s}}\right)^{\mu},$$
where we have used the facts that $\xi(s)\sim(\sqrt{2(n-4)(T-s)},0,0,0)$ and $\sqrt{T-s}\gg \la_*(s)R(s)$ for $T\ll 1$. Thus, one has
\begin{equation*}
\begin{aligned}
J_{11}\lesssim (t_2-t_1) \int_0^{t_1-(t_2-t_1)} \frac{\la_*^{\nu-3}(s)R^{-2-\alpha}(s)(T-s)^{\frac{\mu}{2}}}{(t_2-s)^{1+\frac{\mu}{2}}}ds.
\end{aligned}
\end{equation*}
Recalling that $R(t)=(T-t)^{-1/2}|\log(T-t)|^{\theta}$ for $\theta\in\left(\frac{n-1}{n-2},\frac{n}{n-2}\right)$, we have the following two cases:

\noindent $\bullet$ If $\nu-3+\frac12(2+\alpha)+\frac{\mu}{2}< 0$, then we have
\begin{align*}
&\quad \int_0^{t_1-(t_2-t_1)} \frac{\la_*^{\nu-3}(s)R^{-2-\alpha}(s)(T-s)^{\frac{\mu}{2}}}{(t_2-s)^{1+\frac{\mu}{2}}}ds\\
&\lesssim \la_*^{\nu-3+\frac{\mu}{2}}(t_1)R^{-2-\alpha}(t_1)|\log(T-t_1)|\int_0^{t_1-(t_2-t_1)}\frac{1}{(t_2-s)^{1+\frac{\mu}{2}}}ds\\
&\lesssim \la_*^{\nu-3+\frac{\mu}{2}}(t_1)R^{-2-\alpha}(t_1)|\log(T-t_1)|(t_2-t_1)^{-\mu/2}.
\end{align*}

\noindent $\bullet$ If $\nu-3+\frac12(2+\alpha)+\frac{\mu}{2}\geq 0$, then we decompose
\begin{align*}
&\quad\int_0^{t_1-(t_2-t_1)} \frac{\la_*^{\nu-3}(s)R^{-2-\alpha}(s)(T-s)^{\frac{\mu}{2}}}{(t_2-s)^{1+\frac{\mu}{2}}}ds\\
&= \left(\int_0^{t_1-(T-t_1)}+\int_{t_1-(T-t_1)}^{t_1-(t_2-t_1)}\right) \frac{\la_*^{\nu-3}(s)R^{-2-\alpha}(s)(T-s)^{\frac{\mu}{2}}}{(t_2-s)^{1+\frac{\mu}{2}}}ds.
\end{align*}
Assuming
$\nu-3+\frac12(2+\alpha)<0,$
we obtain that
\begin{equation*}
\begin{aligned}
 &~\int_0^{t_1-(T-t_1)} \frac{\la_*^{\nu-3}(s)R^{-2-\alpha}(s)(T-s)^{\frac{\mu}{2}}}{(t_2-s)^{1+\frac{\mu}{2}}}ds
\lesssim \int_0^{t_1-(T-t_1)} \frac{\la_*^{\nu-3}(s)R^{-2-\alpha}(s)(T-s)^{\frac{\mu}{2}}}{(T-s)^{1+\frac{\mu}{2}}}ds\\
=&~\int_0^{t_1-(T-t_1)}\frac{|\log T|^{\nu-3}(T-s)^{\nu-4+\frac12(2+\alpha)}}{|\log(T-s)|^{2(\nu-3)+\theta(2+\alpha)}} ds
\lesssim\la_*^{\nu+\frac{\mu}{2}-3}(t_2)R^{-2-\alpha}(t_2)(t_2-t_1)^{-\mu/2}
\end{aligned}
\end{equation*}
and similarly
\begin{equation*}
\begin{aligned}
\int_{t_1-(T-t_1)}^{t_1-(t_2-t_1)} \frac{\la_*^{\nu-3}(s)R^{-2-\alpha}(s)(T-s)^{\frac{\mu}{2}}}{(t_2-s)^{1+\frac{\mu}{2}}}ds\lesssim \la_*^{\nu+\frac{\mu}{2}-3}(t_2)R^{-2-\alpha}(t_2)(t_2-t_1)^{-\mu/2}.
\end{aligned}
\end{equation*}
In both cases, we have
$$J_{11}\lesssim \la_*^{\nu+\frac{\mu}{2}-3}(t_2)R^{-2-\alpha}(t_2)(t_2-t_1)^{1-\mu/2}.$$

For $J_{12}$, we evaluate
\begin{align*}
&~\int_{t_1-(t_2-t_1)}^{t_1}\int_{\R^n} |G(x-w,t_1-s)|f(w,s)dwds\\
\lesssim&~ \int_{t_1-(t_2-t_1)}^{t_1} \la_*^{\nu-3}(s)R^{-2-\alpha}(s)\int_{\frac{\left|(w_r,w_z)-\xi(s)\right|}{\sqrt{t_1-s}}\leq \frac{2 \la_*(s)R(s)}{\sqrt{t_1-s}}} e^{-\frac{|\tilde x-\tilde w|^2}{4}}d \tilde w ds\\
\lesssim&~ \int_{t_1-(t_2-t_1)}^{t_1} \la_*^{\nu-3}(s)R^{-2-\alpha}(s)\left(\frac{\sqrt{T-s}}{\sqrt{t_1-s}}\right)^{\mu} ds\\
=&~ \int_{t_1-(t_2-t_1)}^{t_1} \frac{|\log T|^{\nu-3}(T-s)^{\nu+\frac{\mu}{2}-3+\frac12(2+\alpha)}}{|\log(T-s)|^{2(\nu-3)+\theta(2+\alpha)}(t_1-s)^{\mu/2}} ds\\
\lesssim&~ \la_*^{\nu+\frac{\mu}{2}-3}(t_2)R^{-2-\alpha}(t_2) (t_2-t_1)^{1-\mu/2},
\end{align*}
where we have changed variables $\tilde x=x(t_1-s)^{-1/2}$, $\tilde w=w(t_1-s)^{-1/2}$, and $\mu\in(0,1)$. Similarly, we have
$$\int_{t_1-(t_2-t_1)}^{t_1}\int_{\R^n} |G(x-w,t_2-s)|f(w,s)dwds\lesssim \la_*^{\nu+\frac{\mu}{2}-3}(t_2)R^{-2-\alpha}(t_2) (t_2-t_1)^{1-\mu/2}.$$
Thus we conclude that
$$J_{12}\lesssim \la_*^{\nu+\frac{\mu}{2}-3}(t_2)R^{-2-\alpha}(t_2) (t_2-t_1)^{1-\mu/2}.$$
Finally, for $J_{13}$,
\begin{align*}
J_{13}=&~\int_{t_1}^{t_2}\int_{\R^n} G(x-w,t_2-s)f(w,s)dwds\\
\lesssim&~ \int_{t_1}^{t_2} \la_*^{\nu-3}(s)R^{-2-\alpha}(s) \int_{\frac{\left|(w_r,w_z)-\xi(s)\right|}{\sqrt{t_2-s}}\leq \frac{2 \la_*(s)R(s)}{\sqrt{t_2-s}}} e^{-\frac{|\tilde x-\tilde w|^2}{4}}d \tilde w ds\\
\lesssim&~ \la_*^{\nu+\frac{\mu}{2}-3}(t_2)R^{-2-\alpha}(t_2) (t_2-t_1)^{1-\mu/2}
\end{align*}
follows from the same argument as before, where $\tilde x=x(t_2-s)^{-1/2}$ and $\tilde w=w(t_2-s)^{-1/2}$. This completes the proof of \eqref{outerholder-rhs1}.
\end{proof}

The proofs of Lemma \ref{lemma-rhs2} and Lemma \ref{lemma-rhs3} are similar to these carried out above. We omit the details.


\bigskip

\section{Improvement via axisymmetric heat equation}\label{App-B}

\medskip

\subsection{Axisymmetric heat kernel}\label{App-heatkernel}

Using Hankel-Fourier transform, we derive the heat kernel to the heat operator in the axisymmetric class
$$
L=\partial_t-\partial_{rr} -\frac{n-4}{r}\partial_r -\Delta_{\R^3}.
$$
Consider the initial value problem
$$
\left\{
\begin{aligned}
    &~v_t=v_{rr}+\frac{n-4}{r}v_r+\Delta_z v ~&\mbox{ in }~\R_+\times \R^3\times\R_+,\\
    &~v(r,z,0)=v_0(r,z)~&\mbox{ in }~\R_+\times \R^3.
\end{aligned}
\right.
$$
Let
$$
v(r,z,t)=r^{\frac{5-n}{2}}u(r,z,t).
$$
Then
\begin{equation}\label{eqn-uB16}
\left\{
\begin{aligned}
    &~u_t=u_{rr}+\frac{1}{r}u_r-\left(\frac{n-5}{2}\right)^2\frac{u}{r^2}+\Delta_z u ~&\mbox{ in }~\R_+\times \R^3\times\R_+,\\
    &~u(r,z,0)=r^{\frac{n-5}{2}}v_0(r,z)~&\mbox{ in }~\R_+\times \R^3.
\end{aligned}
\right.
\end{equation}
We define the Hankel/Bessel-Fourier transform
$$
    \mathcal F[u](\rho,\omega;t):=(2\pi)^{-3/2}\int_{\R^3}\int_{\R_+}v(r,z,t)J_{\frac{n-5}{2}}(\rho r) r e^{-iz\cdot\omega} dr dz,
$$
where $J_{\frac{n-5}{2}}$ is the Bessel function. Acting this on \eqref{eqn-uB16} gives
$$
    \partial_t \mathcal F[u]=-(\rho^2+|\omega|^2)\mathcal F[u],\quad \mathcal F[u](\rho,\omega;0)=\mathcal F[r^{\frac{n-5}{2}}v_0].
$$
So
$$
\mathcal F[u](\rho,\omega,t)=\mathcal F[r^{\frac{n-5}{2}}v_0] \exp\{-(\rho^2+|\omega|^2)t\}.
$$
Taking inverse Hankel-Fourier transform, we get
\begin{align*}
    &~u(r,z,t)\\
    =&~(2\pi)^{-3/2}\int_{\R^3}\int_{\R_+} F[r^{\frac{n-5}{2}}v_0] e^{-(\rho^2+|\omega|^2)t}J_{\frac{n-5}{2}}(\rho r)e^{iz\cdot\omega}\rho d\rho d\omega\\
    =&~(2\pi)^{-3}\int_{\R^3}\int_{\R_+} \left(\int_{\R^3}\int_{\R_+} s^{\frac{n-5}{2}}v_0(s,\zeta,t)J_{\frac{n-5}{2}}(s\rho) s e^{-i\zeta\cdot\omega} ds d\zeta\right) e^{-(\rho^2+|\omega|^2)t}J_{\frac{n-5}{2}}(\rho r)e^{iz\cdot\omega}\rho d\rho d\omega\\
    =&~(2\pi)^{-3}\int_{\R^3}\int_{\R^3} e^{i\omega\cdot(z-\zeta)-|\omega|^2 t}\left(\int_{\R_+}v_0(s,\zeta,t) s^{\frac{n-3}{2}} ds\int_{\R_+} e^{-\rho^2 t} J_{\frac{n-5}{2}}(s\rho) J_{\frac{n-5}{2}}(\rho r)d\rho \right)d\zeta d\omega\\
    =&~(2\pi)^{-3}\int_{\R^3}\int_{\R^3} e^{i\omega\cdot(z-\zeta)-|\omega|^2 t}\left(\int_{\R_+}v_0(s,\zeta,t) \frac1{2t}\exp\left\{-\frac{r^2+s^2}{4t}\right\} I_{\frac{n-5}{2}}\left(\frac{rs}{2t}\right) s^{\frac{n-3}{2}} ds \right)d\zeta d\omega\\
    =&~(2\pi)^{-3}\int_{\R_+} \frac1{2t}\exp\left\{-\frac{r^2+s^2}{4t}\right\} I_{\frac{n-5}{2}}\left(\frac{rs}{2t}\right) s^{\frac{n-3}{2}} ds  \int_{\R^3} v_0(s,\zeta,t) d\zeta\int_{\R^3} e^{i\omega\cdot(z-\zeta)-|\omega|^2 t} d\omega\\
    =&~(2\pi)^{-3}\int_{\R_+} \frac1{2t}\exp\left\{-\frac{r^2+s^2}{4t}\right\} I_{\frac{n-5}{2}}\left(\frac{rs}{2t}\right) s^{\frac{n-3}{2}} ds  \int_{\R^3} v_0(s,\zeta,t) \left(\frac{\pi}{t}\right)^{3/2}\exp\left\{-\frac{|z-\zeta|^2}{4t}\right\} d\zeta
\end{align*}
where we have used the formula
$$
\int_{\R_+} e^{-\rho^2 t}J_{\frac{n-5}{2}}(\rho s) J_{\frac{n-5}{2}}(\rho r) \rho d\rho =\frac1{2t}\exp\left\{-\frac{r^2+s^2}{4t}\right\} I_{\frac{n-5}{2}}\left(\frac{rs}{2t}\right),
$$
and $I_{\frac{n-5}{2}}$ is the modified Bessel function. Returning to $v$, we have
$$
    \begin{aligned}
        v(r,z,t)=r^{\frac{5-n}{2}}u(r,z,t)=\int_{\R^3}\int_{\R_+} \Gamma_n(r,z;s,\zeta;t) v_0(s,\zeta,t)ds d\zeta
    \end{aligned}
$$
with
\begin{equation}\label{axi-heatkernel}
    \Gamma_n(r,z;s,\zeta;t)=\frac1{16\pi^{3/2}}\frac{r^{\frac{5-n}{2}}s^{\frac{n-3}{2}}}{t^{5/2}}\exp\left\{-\frac{r^2+s^2+|z-\zeta|^2}{4t}\right\} I_{\frac{n-5}{2}}\left(\frac{rs}{2t}\right).
\end{equation}
The representation formula for the non-homogeneous problem can be obtained via Duhamel principle.


\medskip

{
\subsection{Proof of the asymptotic expansion \texorpdfstring{\eqref{nonlocal-oc0}}{}}\label{App-nonlocal}

We prove the asymptotic expansion \eqref{nonlocal-oc0}. Throughout this subsection, dimensional nonzero constants may change from line to line. We write
\[
r_t=c_n\sqrt{T-t},
\qquad
r_s=c_n\sqrt{T-s},
\qquad
c_n=\sqrt{2(n-4)}.
\]
Recall from \eqref{def-Psi0} that
\[
\begin{aligned}
\Psi_0(r,z,t)
=&-\frac{\sqrt2}{8\pi^{3/2}}
r^{\frac{5-n}{2}}
\int_0^t
\frac{\dot\lambda(s)}{(t-s)^{5/2}}
\exp\left\{-\frac{r^2}{4(t-s)}\right\}\,ds  \\
&\times
\int_{\R^3}
\exp\left\{-\frac{|z-\tilde z|^2}{4(t-s)}\right\}\,d\tilde z \\
&\times
\int_{\R_+}
\exp\left\{-\frac{\tilde r^2}{4(t-s)}\right\}
\frac{\tilde r^{\frac{n-3}{2}}}
{(\tilde r-r_s)^2+|\tilde z|^2}
I_{\frac{n-5}{2}}
\left(
\frac{r\tilde r}{2(t-s)}
\right)
\,d\tilde r .
\end{aligned}
\]
We need to analyze
\[
\mathcal P(t)
:=
\int_{B_{4R}}
3U^2(y_1,y')
\Psi_0(\lambda y_1+r_t,\lambda y',t)
Z_5(y_1,y')\,dy_1dy'.
\]
Since
\[
\lambda(t)R(t)\ll \sqrt{T-t},
\]
we may replace, at leading order,
\[
\lambda y_1+r_t=r_t(1+o(1))
\]
inside the slowly varying prefactors. The errors produced by this replacement are of lower order and are absorbed into the final $O(|\dot\lambda(t)|)$ remainder.

Substituting the representation of $\Psi_0$ and changing variables
\[
\hat r=\frac{\tilde r-r_s}{\sqrt{t-s}},
\qquad
\hat z=\frac{\tilde z}{\sqrt{t-s}},
\]
we obtain
\[
\begin{aligned}
\mathcal P(t)
\sim&
(T-t)^{\frac{5-n}{4}}
\int_0^t
\frac{\dot\lambda(s)}{(t-s)^{3/2}}\,ds \\
&\times
\int_{\R^3}
\int_{-r_s/\sqrt{t-s}}^\infty
\mathcal K(\hat r,\hat z,s,t)\,d\hat r\,d\hat z ,
\end{aligned}
\]
where
\begin{align*}
\mathcal K(\hat r,\hat z,s,t)
:=&
\exp\left\{
-\frac{r_t^2+(\sqrt{t-s}\,\hat r+r_s)^2}{4(t-s)}
\right\}
\frac{(\sqrt{t-s}\,\hat r+r_s)^{\frac{n-3}{2}}}
{\hat r^2+|\hat z|^2} \\
&\times
I_{\frac{n-5}{2}}
\left(
\frac{r_t(\sqrt{t-s}\,\hat r+r_s)}{2(t-s)}
\right) \\
&\times
\int_{B_{4R}}
3U^2(y_1,y')Z_5(y_1,y')
\exp\left\{
-\frac{\left|\frac{\lambda y'}{\sqrt{t-s}}-\hat z\right|^2}{4}
\right\}
dy_1dy' .
\end{align*}
The argument of the Bessel function is
\[
\frac{r_t(\sqrt{t-s}\,\hat r+r_s)}{2(t-s)}.
\]
We split the time integral into three regions:
\[
\mathcal P(t)=I+J+K,
\]
where
\[
I:\ 0<s<t-(T-t),
\qquad
J:\ t-(T-t)<s<t-\lambda^2(t),
\qquad
K:\ t-\lambda^2(t)<s<t.
\]

\medskip

\noindent\textit{The region $0<s<t-(T-t)$.}
In this region the small-argument asymptotic of the Bessel function in \eqref{Bessel-main} gives, at leading order,
\[
\begin{aligned}
I\sim&
C_1
\int_0^{t-(T-t)}
\frac{\dot\lambda(s)}{t-s}
\exp\left\{
-\frac{c_n^2(T-t)}{4(t-s)}
\right\}
\,ds \\
&\times
\int_{\R^3}
\int_{-A_n}^\infty
\exp\left\{-\frac{|\hat z|^2}{4}\right\}
\frac{(\hat r+A_n)^{n-4}}
{\hat r^2+|\hat z|^2}
\exp\left\{-\frac{(\hat r+A_n)^2}{4}\right\}
\,d\hat r\,d\hat z ,
\end{aligned}
\]
where
\[
A_n=c_n\sqrt{\frac{T-s}{t-s}}.
\]
For $A_n\leq 1$,
\[
\begin{aligned}
&\int_{\R^3}
\int_{-A_n}^\infty
\exp\left\{-\frac{|\hat z|^2}{4}\right\}
\frac{(\hat r+A_n)^{n-4}}
{\hat r^2+|\hat z|^2}
\exp\left\{-\frac{(\hat r+A_n)^2}{4}\right\}
\,d\hat r\,d\hat z \\
&\qquad =
\frac{2^{n-2}\pi^{3/2}
\Gamma\left(\frac{n-3}{2}\right)}
{n-2}
+O(A_n).
\end{aligned}
\]
Thus
\[
I
=
C_2
\int_0^{t-(T-t)}
\frac{\dot\lambda(s)}{t-s}
\exp\left\{
-\frac{c_n^2(T-t)}{4(t-s)}
\right\}
\,ds
+O(|\dot\lambda(t)|).
\]
Since the exponential factor only changes the dimensional constant at the level of the main order, this gives
\[
I
=
C_2
\int_0^{t-(T-t)}
\frac{\dot\lambda(s)}{t-s}\,ds
+O(|\dot\lambda(t)|).
\]

\medskip

\noindent\textit{The region $t-(T-t)<s<t-\lambda^2(t)$.}
In this region the argument of the Bessel function is large. Hence, by \eqref{Bessel-main},
\[
\begin{aligned}
&I_{\frac{n-5}{2}}
\left(
\frac{r_t(\sqrt{t-s}\,\hat r+r_s)}{2(t-s)}
\right) \\
&\qquad\sim
\frac{1}{\sqrt{2\pi}}
\frac{\sqrt{2(t-s)}}
{\big[r_t(\sqrt{t-s}\,\hat r+r_s)\big]^{1/2}}
\exp\left\{
\frac{r_t(\sqrt{t-s}\,\hat r+r_s)}
{2(t-s)}
\right\}.
\end{aligned}
\]
The exponential factor cancels the corresponding part of the Gaussian kernel, and we obtain
\begin{align*}
J\sim&
C_3
(T-t)^{\frac{4-n}{4}}
\int_{t-(T-t)}^{t-\lambda^2(t)}
\frac{\dot\lambda(s)}
{(t-s)^{\frac{8-n}{4}}}
\,ds \\
&\times
\int_{\R^3}
\int_{-A_n}^\infty
\exp\left\{-\frac{\hat r^2}{4}\right\}
\exp\left\{-\frac{|\hat z|^2}{4}\right\}
\frac{(\hat r+A_n)^{\frac{n-4}{2}}}
{\hat r^2+|\hat z|^2}
\,d\hat r\,d\hat z .
\end{align*}
For $A_n\geq 1$,
\[
\begin{aligned}
&\int_{\R^3}
\int_{-A_n}^\infty
\exp\left\{-\frac{\hat r^2}{4}\right\}
\exp\left\{-\frac{|\hat z|^2}{4}\right\}
\frac{(\hat r+A_n)^{\frac{n-4}{2}}}
{\hat r^2+|\hat z|^2}
\,d\hat r\,d\hat z \\
&\qquad =
4\pi^2 A_n^{\frac{n-4}{2}}
+O\left(A_n^{\frac{n-6}{2}}\right).
\end{aligned}
\]
Therefore
\[
\begin{aligned}
J
=&
C_4
(T-t)^{\frac{4-n}{4}}
\int_{t-(T-t)}^{t-\lambda^2(t)}
\frac{\dot\lambda(s)}
{(t-s)^{\frac{8-n}{4}}}
\left(
\frac{T-s}{t-s}
\right)^{\frac{n-4}{4}}
\,ds
+O(|\dot\lambda(t)|) \\
=&
C_4
\int_{t-(T-t)}^{t-\lambda^2(t)}
\frac{\dot\lambda(s)}{t-s}\,ds
+O(|\dot\lambda(t)|).
\end{aligned}
\]

\medskip

\noindent\textit{The region $t-\lambda^2(t)<s<t$.}
The same large-argument asymptotic gives
\[
|K|
\lesssim
(T-t)^{\frac{4-n}{4}}
\left|
\int_{t-\lambda^2(t)}^t
\frac{\dot\lambda(s)}{t-s}
\frac{(t-s)^\alpha}
{\lambda^{2\alpha}(t)}
(T-s)^{\frac{n-4}{4}}
\,ds
\right|
\]
for some small $\alpha>0$. Since $t-s<\lambda^2(t)$ in this region,
\[
|K|
\lesssim
(T-t)^{\frac{4-n}{4}}
|\dot\lambda(t)|
(T-t)^{\frac{n-4}{4}}
=
O(|\dot\lambda(t)|).
\]

\medskip

Combining the estimates for $I$, $J$, and $K$, we get
\[
\mathcal P(t)
=
C_2
\int_0^{t-(T-t)}
\frac{\dot\lambda(s)}{t-s}\,ds
+
C_4
\int_{t-(T-t)}^{t-\lambda^2(t)}
\frac{\dot\lambda(s)}{t-s}\,ds
+
O(|\dot\lambda(t)|).
\]
The computation of the constants above gives
\[
\frac{C_4}{C_2}
=
\frac{n-2}{2}.
\]
Thus, writing $C=C_2>0$, we obtain
\[
\mathcal P(t)
=
C
\left(
\int_0^{t-(T-t)}
\frac{\dot\lambda(s)}{t-s}\,ds
+
c_n^*
\int_{t-(T-t)}^{t-\lambda^2(t)}
\frac{\dot\lambda(s)}{t-s}\,ds
\right)
+
O(|\dot\lambda(t)|),
\]
where
\[
c_n^*=\frac{n-2}{2}.
\]
This proves \eqref{nonlocal-oc0}.

}

\subsection{Estimate of the correction terms}\label{App-est-Psi_0}

We estimate the size of $\Psi_0$ that is defined by
\begin{align*}
&~\Psi_0(r,z,t)\\
=&~-\frac{\sqrt2}{8\pi^{3/2}} r^{\frac{5-n}2}\int_0^t \frac{\dot\la(s)}{(t-s)^{5/2}}\exp\left\{-\frac{r^2}{4(t-s)}\right\} ds\int_{\R^3} \exp\left\{-\frac{|z-\tilde z|^2}{4(t-s)}\right\}d\tilde z\\
&~\qquad\int_{\R_+} \exp\left\{-\frac{\tilde r^2}{4(t-s)}\right\}\frac{\tilde r^{\frac{n-3}{2}}}{(\tilde r-c_n\sqrt{T-s})^2+|\tilde z|^2} I_{\frac{n-5}{2}}\left(\frac{r\tilde r}{2(t-s)}\right) d\tilde r\\
=&~-\frac{\sqrt2}{8\pi^{3/2}} r^{\frac{5-n}2}\int_0^t \frac{\dot\la(s)}{(t-s)^{5/2}}\exp\left\{-\frac{r^2}{4(t-s)}\right\} ds\int_{\R^3} \exp\left\{-\frac{|z-\tilde z|^2}{4(t-s)}\right\}d\tilde z\\
&~\qquad\left(\int_0^{\frac{2(t-s)}{r}}+\int_{\frac{2(t-s)}{r}}^\infty\right) \exp\left\{-\frac{\tilde r^2}{4(t-s)}\right\}\frac{\tilde r^{\frac{n-3}{2}}}{(\tilde r-c_n\sqrt{T-s})^2+|\tilde z|^2} I_{\frac{n-5}{2}}\left(\frac{r\tilde r}{2(t-s)}\right) d\tilde r\\
:=&~{\rm I}+{\rm II}.
\end{align*}
We estimate
\begin{align*}
        |{\rm I}|\lesssim &~\Bigg|\int_0^t \frac{\dot\la(s)}{(t-s)^{n/2}}\exp\left\{-\frac{r^2}{4(t-s)}\right\} ds\int_{\R^3} \exp\left\{-\frac{|z-\tilde z|^2}{4(t-s)}\right\}d\tilde z\\
&~\qquad\quad\int_0^{\frac{2(t-s)}{r}} \exp\left\{-\frac{\tilde r^2}{4(t-s)}\right\}\frac{\tilde r^{n-4}}{(\tilde r-c_n\sqrt{T-s})^2+|\tilde z|^2}  d\tilde r\Bigg|\\
=&~\Bigg|\left(\int_0^{t-\frac{r^2}{4}}+\int^t_{t-\frac{r^2}{4}}\right) \frac{\dot\la(s)}{(t-s)^{n/2}}\exp\left\{-\frac{r^2}{4(t-s)}\right\} ds\int_{\R^3} \exp\left\{-\frac{|z-\tilde z|^2}{4(t-s)}\right\}d\tilde z\\
&~\qquad\quad\int_0^{\frac{2(t-s)}{r}} \exp\left\{-\frac{\tilde r^2}{4(t-s)}\right\}\frac{\tilde r^{n-4}}{(\tilde r-c_n\sqrt{T-s})^2+|\tilde z|^2}  d\tilde r\Bigg|,
    \end{align*}
and
\begin{align*}
        |{\rm II}|\lesssim &~r^{\frac{4-n}{2}}\Bigg|\int_0^t \frac{\dot\la(s)}{(t-s)^{2}} ds\int_{\R^3} \exp\left\{-\frac{|z-\tilde z|^2}{4(t-s)}\right\}d\tilde z\\
&~\qquad\quad\int_{\frac{2(t-s)}{r}}^\infty \exp\left\{-\frac{(\tilde r-r)^2}{4(t-s)}\right\}\frac{\tilde r^{\frac{n-4}{2}}}{(\tilde r-c_n\sqrt{T-s})^2+|\tilde z|^2}  d\tilde r\Bigg|.
    \end{align*}
To estimate ${\rm I}$, we split different regions. 

\noindent $\bullet$
If $0<r\leq2\sqrt{T-t}$, then
\begin{align*}
    |{\rm I}|\lesssim&~\Bigg|\left(\int_0^{t-(T-t)}+\int_{t-(T-t)}^{t-\frac{r^2}{4}}+\int^t_{t-\frac{r^2}{4}}\right) \frac{\dot\la(s)}{(t-s)^{n/2}}\exp\left\{-\frac{r^2}{4(t-s)}\right\} ds\int_{\R^3} \exp\left\{-\frac{|z-\tilde z|^2}{4(t-s)}\right\}d\tilde z\\
&~\qquad\quad\int_0^{\frac{2(t-s)}{r}} \exp\left\{-\frac{\tilde r^2}{4(t-s)}\right\}\frac{\tilde r^{n-4}}{(\tilde r-c_n\sqrt{T-s})^2+|\tilde z|^2}  d\tilde r\Bigg|\\
:=&~{\rm I}_1+{\rm I}_2+{\rm I}_3.
\end{align*}
For any $\alpha>0$,
\begin{align*}
    &~|{\rm I}_1|\\
    \lesssim
    &~\Bigg|\int_0^{t-(T-t)}\frac{\dot\la(s)}{(t-s)^{n/2}} ds\int_{\R^3} \exp\left\{-\frac{|z-\tilde z|^2}{4(t-s)}\right\}d\tilde z\\
&~\qquad\quad\left(\int_0^{2\sqrt{t-s}}+\int_{2\sqrt{t-s}}^{\frac{2(t-s)}{r}} \right)\exp\left\{-\frac{\tilde r^2}{4(t-s)}\right\}\frac{\tilde r^{n-4}}{(\tilde r-c_n\sqrt{T-s})^2+|\tilde z|^2}  d\tilde r\Bigg|\\
\lesssim&~\Bigg|\int_0^{t-(T-t)}\frac{\dot\la(s)}{(t-s)^{n/2}} ds\int_{\R^3} \exp\left\{-\frac{|z-\tilde z|^2}{4(t-s)}\right\}d\tilde z \int_0^{2\sqrt{t-s}} \frac{\tilde r^{n-4}}{(\tilde r-c_n\sqrt{T-s})^2+|\tilde z|^2}  d\tilde r\Bigg|\\
&~+\Bigg|\int_0^{t-(T-t)}\frac{\dot\la(s)}{(t-s)^{n/2}} ds\int_{\R^3} \exp\left\{-\frac{|z-\tilde z|^2}{4(t-s)}\right\}d\tilde z \int_{2\sqrt{t-s}}^{\frac{2(t-s)}{r}} \left(\frac{t-s}{\tilde r^2}\right)^\alpha\frac{\tilde r^{n-4}}{(\tilde r-c_n\sqrt{T-s})^2+|\tilde z|^2}  d\tilde r\Bigg|\\
\lesssim&~\Bigg|\int_0^{t-(T-t)}\frac{\dot\la(s)}{(t-s)^{3/2}} ds\int_{\R^3} \exp\left\{-\frac{|z-\tilde z|^2}{4(t-s)}\right\}\frac{1}{T-s+|\tilde z|^2}d\tilde z\\
&~+\Bigg|\int_0^{t-(T-t)}\frac{\dot\la(s)}{(t-s)^{n/2-\alpha}} ds\int_{\R^3} \exp\left\{-\frac{|z-\tilde z|^2}{4(t-s)}\right\}d\tilde z \int_{2\sqrt{t-s}}^{\frac{2(t-s)}{r}} \frac{\tilde r^{n-4-2\alpha}}{(\tilde r-c_n\sqrt{T-s})^2+|\tilde z|^2}  d\tilde r\Bigg|\\
\lesssim&~\Bigg|\int_0^{t-(T-t)}\frac{\dot\la(s)}{t-s} ds\Bigg|\lesssim |\log T|^{-\frac{2}{n-2}},
\end{align*}
where we chose $\alpha=\frac{n-2}{2}-\epsilon$ for $\epsilon>0$ small and have used the bound $|\dot\la(t)|\lesssim|\log(T-t)|^{-\frac{n}{n-2}}$. Similarly,
\begin{align*}
    |{\rm I}_2|\lesssim&~\Bigg|\int_{t-(T-t)}^{t-\frac{r^2}{4}}\frac{\dot\la(s)}{t-s} ds\Bigg|\lesssim |\dot\la(t)|\left|\log \frac{r^2}{4(T-t)}\right|,
\end{align*}
\begin{align*}
    |{\rm I}_3|\lesssim&~\Bigg|\int^t_{t-\frac{r^2}{4}} \frac{\dot\la(s)}{(t-s)^{n/2}}\left(\frac{t-s}{r^2}\right)^\alpha ds\int_{\R^3} \exp\left\{-\frac{|z-\tilde z|^2}{4(t-s)}\right\}d\tilde z\\
&~\qquad\quad\int_0^{\frac{2(t-s)}{r}} \exp\left\{-\frac{\tilde r^2}{4(t-s)}\right\}\frac{\tilde r^{n-4}}{(\tilde r-c_n\sqrt{T-s})^2+|\tilde z|^2}  d\tilde r\Bigg|\\
\lesssim&~r^{3-n-2\alpha}\left|\int_{t-\frac{r^2}{4}}^t \frac{\dot\la(s)}{(t-s)^{\frac{5-n}{2}-\alpha}}ds\right|\lesssim|\dot\la(t)|.
\end{align*}

\noindent $\bullet$ 
If $2\sqrt{T-t}<r$, then similarly
\begin{align*}
    |{\rm I}|\lesssim&~\Bigg|\left(\int_0^{t-\frac{r^2}{4}}+\int_{t-\frac{r^2}{4}}^{t-(T-t)}+\int_{t-(T-t)}^t\right) \frac{\dot\la(s)}{(t-s)^{n/2}}\exp\left\{-\frac{r^2}{4(t-s)}\right\} ds\int_{\R^3} \exp\left\{-\frac{|z-\tilde z|^2}{4(t-s)}\right\}d\tilde z\\
&~\qquad\quad\int_0^{\frac{2(t-s)}{r}} \exp\left\{-\frac{\tilde r^2}{4(t-s)}\right\}\frac{\tilde r^{n-4}}{(\tilde r-c_n\sqrt{T-s})^2+|\tilde z|^2}  d\tilde r\Bigg|\\
\lesssim&~\Bigg|\int_0^{t-(T-t)}\frac{\dot\la(s)}{t-s} ds\Bigg|+|\dot\la(t)|\left(\frac{r}{\sqrt{T-t}}\right)^{3-n-2\alpha}\lesssim |\log T|^{-\frac2{n-2}}.
\end{align*}

In sum, we have that
$$
    \begin{aligned}
        |{\rm I}|\lesssim&~\Bigg|\int_0^{t-\frac{r^2}{4}}\frac{\dot\la(s)}{t-s} ds\Bigg|+|\dot\la(t)|\lesssim |\log T|^{-\frac{2}{n-2}}+|\dot\la(t)|\left|\log \frac{r^2}{4(T-t)}\right|.
    \end{aligned}
$$

We next estimate that for $0<r\leq \sqrt{T-t}$,
\begin{align*}
        |{\rm II}|\lesssim &~r^{\frac{4-n}{2}}\Bigg|\int_0^t \frac{\dot\la(s)}{(t-s)^{2}} ds\int_{\R^3} \exp\left\{-\frac{|z-\tilde z|^2}{4(t-s)}\right\}d\tilde z\\
&~\qquad\quad\int_{\frac{2(t-s)}{r}}^\infty \exp\left\{-\frac{(\tilde r-r)^2}{4(t-s)}\right\}\frac{\tilde r^{\frac{n-4}{2}}}{(\tilde r-c_n\sqrt{T-s})^2+|\tilde z|^2}  d\tilde r\Bigg|\\
=&~r^{\frac{4-n}{2}}\Bigg|\left(\int_0^{t-(T-t)}+\int_{t-(T-t)}^{t-r^2}+\int_{t-r^2}^t \right) \frac{\dot\la(s)}{(t-s)^{2}} ds\int_{\R^3} \exp\left\{-\frac{|z-\tilde z|^2}{4(t-s)}\right\}d\tilde z\\
&~\qquad\quad\int_{\frac{2(t-s)}{r}}^\infty \exp\left\{-\frac{(\tilde r-r)^2}{4(t-s)}\right\}\frac{\tilde r^{\frac{n-4}{2}}}{(\tilde r-c_n\sqrt{T-s})^2+|\tilde z|^2}  d\tilde r\Bigg|\\
:=&~{\rm II}_1+{\rm II}_2+{\rm II}_3.
    \end{align*}
Here
\begin{align*}
    |{\rm II}_1|\lesssim&~r^{\frac{4-n}{2}}\Bigg|\int^{t-(T-t)}_0 \frac{\dot\la(s)}{(t-s)^{\frac{8-n}{4}}} ds\int^\infty_{\frac{\sqrt{t-s}}{r}} \exp\left\{-\left(\rho-\frac{r}{2\sqrt{t-s}}\right)^2\right\}\rho^{\frac{n-4}{2}} d\rho\Bigg|\\
    \lesssim&~r^{\frac{4-n}{2}}\Bigg|\int_0^{t-(T-t)} \frac{\dot\la(s)}{(t-s)^{\frac{8-n}{4}}} \left(\frac{\sqrt{t-s}}{r}\right)^{\frac{n-6}{2}}\exp\left\{-\frac{t-s}{r^2}\right\}ds\Bigg|\\
    \lesssim&~ \left|\int_0^{t-(T-t)} \frac{\dot\la(s)}{t-s}ds\right|
    \lesssim |\log T|^{-\frac2{n-2}},
\end{align*}
and
\begin{align*}
    |{\rm II}_2|\lesssim&~ \Bigg|\int_{t-(T-t)}^{t-r^2} \frac{\dot\la(s)}{t-s} ds \Bigg|\lesssim |\dot\la(t)|\left|\log \frac{r^2}{(T-t)}\right|,
\end{align*}
where we have used 
$$
     \int_\zeta^\infty \exp\left\{-\left(\rho-\frac{1}{2\zeta}\right)^2\right\}\rho^{\frac{n-4}{2}}d\rho \sim \begin{cases}
        \zeta^{\frac{4-n}{2}} &~\mbox{ as }~\zeta\to 0^+,\\
        \zeta^{\frac{n-6}{2}} e^{-\zeta^2} &~\mbox{ as }~\zeta\to \infty.
    \end{cases}
$$
Also,
\begin{align*}
    |{\rm II}_3|
    \lesssim&~r^{\frac{4-n}{2}}\Bigg|\int_{t-r^2}^t \frac{\dot\la(s)}{(t-s)^{\frac{8-n}{4}}} ds\int_{\mathbb R^3} \exp\left\{-\left|\hat z-\frac{z}{2\sqrt{t-s}}\right|^2\right\} d\hat z\\
    &~\qquad\qquad\int_{\frac{\sqrt{t-s}}{r}}^\infty \exp\left\{-\left(\hat r-\frac{r}{2\sqrt{t-s}}\right)^2\right\} \frac{\hat r^{\frac{n-4}{2}}}{\left(\hat r-\frac{c_n}{2}\sqrt{\frac{T-s}{t-s}}\right)^2+|\hat z|^2} d\hat r\Bigg|\\
    \lesssim&~ r^{\frac{4-n}{2}}\Bigg|\int_{t-r^2}^t \frac{\dot\la(s)}{(t-s)^{\frac{8-n}{4}}} (t-s)^{\frac{6-n}{4}} r^{\frac{n-4}{2}}ds\Bigg|
    \lesssim|\dot\la(t)|.
\end{align*}
Estimate of ${\rm II}$ in the region $r\geq \sqrt{T-t}$ follows similarly. Collecting all the estimates above we obtain
\begin{equation}\label{est-Psi_0}
    |\Psi_0(r,z,t)|\lesssim |\log T|^{-\frac{2}{n-2}}+|\dot\la(t)|\left|\log \frac{r^2}{(T-t)}\right|.
\end{equation}

\medskip

We next estimate $\Psi_1$ defined in \eqref{def-Psi1}, whose right hand side is defined in \eqref{def-Sout}. We have
\begin{equation}\label{est-Psi_1}
    |\Psi_1(r,z,t)|\lesssim |\log T|^{-\epsilon_0}
\end{equation}
for some $\epsilon_0>0$.
Indeed, due to the cut-off, the size of the first term is smaller than that of $\Psi_0$ as $\left|(1-\eta_R)\frac{\alpha_0\dot\la(t)\la^2(t)}{\rho^2(\la^2+\rho^2)}\right|\lesssim R^{-2}\frac{|\dot\la|}{\rho^2}.$ For the last two terms in \eqref{def-Sout}, since
$$\xi_{r,*}(t)=\sqrt{2(n-4)(T-t)},$$
we have
$$\dot\xi_{r,*}+\frac{n-4}{\la_* y_1+\xi_{r,*}}= -\frac{(n-4)\la_* y_1}{\xi_{r,*}(\la_* y_1+\xi_{r,*})},$$
and thus
\begin{equation*}
\begin{aligned}
&\quad \left|(1-\eta_R)\left(\frac{n-4}{\la_* y_1+\xi_{r,*}}\la_*^{-2}\partial_{y_1} U(y)+\la_*^{-2}\nabla U\cdot \dot\xi_*\right)\right|\\
&\lesssim \frac{\la_*^{-1}y_1^2}{\xi_{r,*}(\la_* y_1+\xi_{r,*})(1+|y|^2)^2}\chi_{\{R\leq |y|\}}\lesssim \frac{\la_*(t)}{\rho^2r\sqrt{T-t}}\chi_{\{\la_*R\lesssim\rho\}}.
\end{aligned}
\end{equation*}
Then if $r\gtrsim\sqrt{T-t}$, the following rough estimate suffices
\begin{align*}
&~\left|\int_0^t \frac{\la_*(s)}{(t-s)^{n/2}(T-s)}\int_{\la_*(s)R(s)\leq \left|(w_r,w_z)-\xi(s)\right|} \frac{e^{-\frac{|x-w|^2}{4(t-s)}}}{\left|(w_r,w_z)-\xi(s)\right|^2} dwds\right|\\
\lesssim&  \int_0^t \frac{\la_*(s)}{(t-s)(T-s)}\int_{\frac{\la_*(s)R(s)}{\sqrt{t-s}}\leq\left|(\tilde w_r,\tilde w_z)-\frac{\xi(s)}{\sqrt{t-s}}\right|} \frac{e^{-\frac{|\tilde x-\tilde w|^2}{4}}}{\left|(\tilde w_r,\tilde w_z)-\frac{\xi(s)}{\sqrt{t-s}}\right|^2} d\tilde wds\\
\lesssim&  \int_0^t \frac{\la_*(s)}{(t-s)(T-s)}\frac{t-s}{\la_*^2(s)R^2(s)}ds\lesssim\int_0^t \frac{1}{|\log T|^{\frac{2}{n-2}}(T-s)|\log(T-t)|^{2\theta-\frac{n}{n-2}}}ds 
\lesssim  |\log T|^{2-2\theta}
\end{align*}
where we have used $R(t)=(T-t)^{-1/2}|\log(T-t)|^{\theta}$ with $\theta>\frac{n-1}{n-2}$. The treatment of the convolution in the region $0<r\lesssim\sqrt{T-t}$  is similar to that of $\Psi_0$. We omit the details.

\medskip

\bigskip

\subsection*{Acknowledgments}
	M.~del Pino has been supported by the Royal Society Research Professorship grant RP-R1-180114 and by the ERC/UKRI Horizon Europe grant ASYMEVOL, EP/Z000394/1.  The  research of J.~Wei is partially supported by GRF of RGC of Hong Kong entitle "On critical and supercritical Fujita equation".  Y. Zhou is supported in part by the Fundamental Research Funds for the Central Universities.

\medskip






\begin{thebibliography}{10}







\bibitem{aryan} S. Aryan. \newblock Soliton resolution for the energy-critical nonlinear heat equation in the radial case.
\newblock To appear in {\em Analysis \& PDE}.

\bibitem{KS-finite}
Federico Buseghin, Juan D\'{a}vila, Manuel del Pino, and Monica Musso.
\newblock Existence of finite time blow-up in {K}eller-{S}egel system.
\newblock {\em Ann. PDE}, 12(1):Paper No. 17, 144, 2026.

\bibitem{ringKS1}
Federico Buseghin, Juan D\'{a}vila, Manuel del Pino, and Monica Musso.
\newblock Finite-time blow-up for the three dimensional axially symmetric
  {K}eller-{S}egel system.
\newblock {\em J. Funct. Anal.}, 290(12):Paper No. 111443, 57, 2026.


\bibitem{ChaeTsaiLuoHou}
Dongho Chae and Tai-Peng Tsai.
\newblock Remark on Luo-Hou's ansatz for a self-similar solution to the 3D Euler equations.
\newblock {\em Nonlinearity}, 27(8):1933--1941, 2014.




\bibitem{Collot17APDE}
Charles Collot.
\newblock Nonradial type {II} blow up for the energy-supercritical semilinear
  heat equation.
\newblock {\em Anal. PDE}, 10(1):127--252, 2017.


\bibitem{Collot-KS}
Charles Collot, Tej-Eddine Ghoul, Nader Masmoudi, and Van~Tien Nguyen.
\newblock Refined description and stability for singular solutions of the 2{D}
  {K}eller-{S}egel system.
\newblock {\em Comm. Pure Appl. Math.}, 75(7):1419--1516, 2022.

\bibitem{Collot-KS-2023}
Charles Collot, Tej-Eddine Ghoul, Nader Masmoudi, and Van~Tien Nguyen.
\newblock Collapsing-ring blowup solutions for the Keller-Segel system in three dimensions and higher. \newblock {\em J. Funct. Anal.} 285 (2023), no. 7, Paper No. 110065, 41 pp.

\bibitem{Collot17CMP}
Charles Collot, Frank Merle, and Pierre Rapha\"el.
\newblock Dynamics near the ground state for the energy critical nonlinear heat
  equation in large dimensions.
\newblock {\em Comm. Math. Phys.}, 352(1):215--285, 2017.

\bibitem{CollotMerleRaphael}
Charles Collot, Frank Merle, and Pierre Rapha\"{e}l.
\newblock Strongly anisotropic type {II} blow up at an isolated point.
\newblock {\em J. Amer. Math. Soc.}, 33(2):527--607, 2020.


\bibitem{Green16JEMS}
Carmen Cort\'{a}zar, Manuel del Pino, and Monica Musso.
\newblock Green's function and infinite-time bubbling in the critical nonlinear
  heat equation.
\newblock {\em J. Eur. Math. Soc. (JEMS)}, 22(1):283--344, 2020.


\bibitem{gluingKS}
Juan D\'{a}vila, Manuel del Pino, Jean Dolbeault, Monica Musso, and Juncheng
  Wei.
\newblock Existence and stability of infinite time blow-up in the
  {K}eller-{S}egel system.
\newblock {\em Arch. Ration. Mech. Anal.}, 248(4):Paper No. 61, 154, 2024.





\bibitem{17HMF}
Juan D\'{a}vila, Manuel del Pino, and Juncheng Wei.
\newblock Singularity formation for the two-dimensional harmonic map flow into
  {$S^2$}.
\newblock {\em Invent. Math.}, 219(2):345--466, 2020.








\bibitem{type25D}
Manuel del Pino, Monica Musso, and Juncheng Wei.
\newblock Type {II} {B}low-up in the 5-dimensional {E}nergy {C}ritical {H}eat
  {E}quation.
\newblock {\em Acta Math. Sin. (Engl. Ser.)}, 35(6):1027--1042, 2019.



\bibitem{17type2}
Manuel del Pino, Monica Musso, and Juncheng Wei.
\newblock Geometry driven type {II} higher dimensional blow-up for the critical
  heat equation.
\newblock {\em J. Funct. Anal.}, 280(1):108788, 49, 2021.



\bibitem{ni4d}
Manuel del Pino, Monica Musso, Juncheng Wei, and Yifu Zhou.
\newblock Type {II} finite time blow-up for the energy critical heat equation
  in {$\Bbb R^4$}.
\newblock {\em Discrete Contin. Dyn. Syst.}, 40(6):3327--3355, 2020.




\bibitem{FHV00}
Stathis Filippas, Miguel~A. Herrero, and Juan J.~L. Vel\'azquez.
\newblock Fast blow-up mechanisms for sign-changing solutions of a semilinear
  parabolic equation with critical nonlinearity.
\newblock {\em R. Soc. Lond. Proc. Ser. A Math. Phys. Eng. Sci.},
  456(2004):2957--2982, 2000.

\bibitem{Fujita66}
Hiroshi Fujita.
\newblock On the blowing up of solutions of the {C}auchy problem for
  {$u_{t}=\Delta u+u^{1+\alpha }$}.
\newblock {\em J. Fac. Sci. Univ. Tokyo Sect. I}, 13:109--124 (1966), 1966.






\bibitem{Giga87Indiana}
Yoshikazu Giga and Robert~V. Kohn.
\newblock Characterizing blowup using similarity variables.
\newblock {\em Indiana Univ. Math. J.}, 36(1):1--40, 1987.


\bibitem{Giga04Indiana}
Yoshikazu Giga, Shin'ya Matsui, and Satoshi Sasayama.
\newblock Blow up rate for semilinear heat equations with subcritical
  nonlinearity.
\newblock {\em Indiana Univ. Math. J.}, 53(2):483--514, 2004.

\bibitem{Giga04MMAS}
Yoshikazu Giga, Shin'ya Matsui, and Satoshi Sasayama.
\newblock On blow-up rate for sign-changing solutions in a convex domain.
\newblock {\em Math. Methods Appl. Sci.}, 27(15):1771--1782, 2004.



\bibitem{harada6D}
Junichi Harada.
\newblock A type {II} blowup for the six dimensional energy critical heat
  equation.
\newblock {\em Ann. PDE}, 6(2):Paper No. 13, 63, 2020.


\bibitem{Velazquez94pJL}
Miguel~A. Herrero and Juan J.~L. Vel\'{a}zquez.
\newblock Explosion de solutions d'\'{e}quations paraboliques semilin\'{e}aires
  supercritiques.
\newblock {\em C. R. Acad. Sci. Paris S\'{e}r. I Math.}, 319(2):141--145, 1994.

\bibitem{herrero1992blow}
Miguel~A Herrero and Juan~JL Vel{\'a}zquez.
\newblock A blow up result for semilinear heat equations in the supercritical
  case.
\newblock {\em preprint}, 1992.



\bibitem{HouHuangEuler}
Thomas Y. Hou and De Huang.
\newblock Potential singularity formation of incompressible axisymmetric Euler
equations with degenerate viscosity coefficients.
\newblock {\em Physica D}, 435:133257, 2022.

\bibitem{HouNSBoussinesq}
Thomas Y. Hou.
\newblock Nearly self-similar blowup of generalized axisymmetric
Navier--Stokes and Boussinesq equations.
\newblock {\em arXiv preprint arXiv:2405.10916}, 2024.














\bibitem{KimMerle}
Kihyun Kim and Frank Merle.
\newblock On classification of global dynamics for energy-critical equivariant
  harmonic map heat flows and radial nonlinear heat equation.
\newblock {\em Comm. Pure Appl. Math.}, 78(9):1783--1842, 2025.

\bibitem{Merle04CPAM}
Hiroshi Matano and Frank Merle.
\newblock On nonexistence of type {II} blowup for a supercritical nonlinear
  heat equation.
\newblock {\em Comm. Pure Appl. Math.}, 57(11):1494--1541, 2004.

\bibitem{Merle09JFA}
Hiroshi Matano and Frank Merle.
\newblock Classification of type {I} and type {II} behaviors for a
  supercritical nonlinear heat equation.
\newblock {\em J. Funct. Anal.}, 256(4):992--1064, 2009.




\bibitem{Merle97Duke}
Frank Merle and Hatem Zaag.
\newblock Stability of the blow-up profile for equations of the type
  {$u_t=\Delta u+|u|^{p-1}u$}.
\newblock {\em Duke Math. J.}, 86(1):143--195, 1997.

\bibitem{Mizoguchi05Indiana}
Noriko Mizoguchi.
\newblock Boundedness of global solutions for a supercritical semilinear heat
  equation and its application.
\newblock {\em Indiana Univ. Math. J.}, 54(4):1047--1059, 2005.

\bibitem{Mizoguchi11JDE}
Noriko Mizoguchi.
\newblock Nonexistence of type {II} blowup solution for a semilinear heat
  equation.
\newblock {\em J. Differential Equations}, 250(1):26--32, 2011.


\bibitem{MussoNAMS}
Monica Musso.
\newblock Bubbling blow-up in critical elliptic and parabolic problems.
\newblock {\em Notices Amer. Math. Soc.}, 69(10):1700--1706, 2022.



\bibitem{Quittner21Duke}
Pavol Quittner.
\newblock Optimal {L}iouville theorems for superlinear parabolic problems.
\newblock {\em Duke Math. J.}, 170(6):1113--1136, 2021.

\bibitem{Souplet07book}
Pavol Quittner and Philippe Souplet.
\newblock {\em Superlinear parabolic problems}.
\newblock Birkh\"{a}user Advanced Texts: Basler Lehrb\"{u}cher. [Birkh\"{a}user
  Advanced Texts: Basel Textbooks]. Birkh\"{a}user/Springer, Cham, 2019.
\newblock Blow-up, global existence and steady states, Second edition of [
  MR2346798].


\bibitem{Schweyer12JFA}
R\'emi Schweyer.
\newblock Type {II} blow-up for the four dimensional energy critical semi
  linear heat equation.
\newblock {\em J. Funct. Anal.}, 263(12):3922--3983, 2012.







\bibitem{TaoEuler}
Terence Tao.
\newblock Finite time blowup for {L}agrangian modifications of the
  three-dimensional {E}uler equation.
\newblock {\em Ann. PDE}, 2(2):Art. 9, 79, 2016.

\bibitem{wang2021refined}
Kelei Wang and Juncheng Wei.
\newblock Refined blowup analysis and nonexistence of type {II} blowups for an
  energy critical nonlinear heat equation.
\newblock {\em arXiv preprint arXiv:2101.07186}, 2021.




\end{thebibliography}

\end{document}